\numberwithin{equation}{section}
\def\re{\operatorname{Re}}
\def\im{\operatorname{Im}}
\def\arg{\operatorname{arg}}
\def\mult{\operatorname{\mu}}
\def\modulus{\operatorname{mod}}
\def\N{\mathbb{N}}
\def\Z{\mathbb{Z}}
\def\R{\mathbb{R}}
\def\C{\mathbb{C}}
\def\bC{\overline{\mathbb{C}}}
\def\D{\mathbb{D}}
\newtheorem{lemma}{Lemma}[section]
\newtheorem{theorem}{Theorem}[section]
\newtheorem{thmx}{Theorem}
\newtheorem{thmy}{Theorem}
\theoremstyle{remark}
\newtheorem*{remark0}{Remark}
\newtheorem{question}{Question}[section]
\begin{document}  
\title{On Bloch's ``Principle of topological continuity''}
\author{Walter Bergweiler and Alexandre Eremenko}
\date{}
\maketitle
\centerline{\emph{Dedicated to the memory of Larry Zalcman}}
\begin{abstract}
We discuss to what extent certain results about totally ramified values of
entire and meromorphic functions remain valid if one relaxes the hypothesis
that some value is totally ramified by assuming only that all islands over
some Jordan domain are multiple. 
In particular, we prove a result
suggested by Bloch which says that an entire function of order less than~$1$
has a simple island over at least one of two given Jordan domains with 
disjoint closures.

\medskip

MSC 2020: 30D30, 30D35, 30D45.

\medskip

Keywords: meromorphic function, entire function, ramified value, island, Ahlfors theory, covering surface, quasiconformal deformation, line complex.
\end{abstract}
\section{Introduction}\label{intro}
About one hundred years ago, Andr\'e Bloch~\cite{Bloch1926} wrote a paper consisting mainly of
heuristic speculations based on two philosophical principles. 
In this paper, he anticipated several important results of 20th century geometric 
function theory -- and even today reading this paper can still be rewarding.

The first of these principles Bloch phrased as ``Nihil est in infinito quod non
prius fuerit in finito''. This principle has often been interpreted as meaning
that if all entire functions with a certain property are constant, then the 
family of functions which are holomorphic in some domain and have this property 
is normal. This is an important guideline in the 
theory of normal families; see \cite{Bergweiler2006,Zalcman1975,Zalcman1998} for a discussion. 
We will call it \emph{Bloch's normal family principle} in the sequel. 
Another interpretation of ``Nihil est in infinito quod non
prius fuerit in finito'' given by Bloch is that features
of transcendental entire and meromorphic functions are in some form already 
present in polynomials and rational functions.

The second principle that Bloch discusses he calls the \emph{principle of topological
continuity}. It says that certain true statements remain true if one modifies the data from
a metric point of view, but not from the topological point of view.

To illustrate this principle, he quotes the following theorem.
\begin{thmx} \label{bloch1}
Let $D$ be a domain and let $a_1,a_2,a_3\in\C$ be distinct.
Let $\mathcal F$ be the family of all functions holomorphic in $D$ 
which do not have a simple $a_j$-point, for all $j\in\{1,2,3\}$.
Then $\mathcal F$ is normal.
\end{thmx}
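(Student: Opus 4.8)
The plan is to argue by contradiction with the help of Zalcman's rescaling lemma and Hurwitz's theorem, after which the statement follows from a classical fact about totally ramified values. Suppose $\mathcal F$ is not normal; then it fails to be normal at some $z_0\in D$, and Zalcman's lemma provides functions $f_n\in\mathcal F$, points $z_n\to z_0$, and numbers $\rho_n\to 0^+$ such that $g_n(\zeta):=f_n(z_n+\rho_n\zeta)$ converges, locally uniformly on $\C$ in the spherical metric, to a non-constant meromorphic function $g$ on $\C$.

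First I would upgrade $g$ to an entire function. If $g$ had a pole at a point $\zeta_0$, then $|g|$ is large near $\zeta_0$, so the $g_n$ are zero-free there for large $n$, and $1/g_n\to 1/g$ locally uniformly near $\zeta_0$ (the chordal metric is invariant under $w\mapsto 1/w$, and $1/g$ is holomorphic near $\zeta_0$); since $1/g$ vanishes at $\zeta_0$, Hurwitz's theorem would yield a zero of $1/g_n$ near $\zeta_0$ for large $n$, i.e.\ a pole of $g_n$, hence of $f_n$ — impossible, as $f_n$ is holomorphic. Thus $g$ is entire and $g_n\to g$ locally uniformly in the usual sense.

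Next I would show $g$ has no simple $a_j$-point for $j\in\{1,2,3\}$. If $g(\zeta_0)=a_j$ with $g'(\zeta_0)\neq 0$, then $g-a_j$ has a simple zero at $\zeta_0$ and no other zero on a small circle about $\zeta_0$, so by Hurwitz's theorem $g_n-a_j$ has, for all large $n$, exactly one zero (counted with multiplicity) inside that circle, which is therefore simple; hence $f_n$ has a simple $a_j$-point near $z_n$, contradicting $f_n\in\mathcal F$. Consequently each of $a_1,a_2,a_3$ is either omitted by $g$ or totally ramified for $g$ (every $a_j$-point of $g$ has multiplicity at least $2$), while $g$ is non-constant.

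This contradicts the classical theorem that a non-constant entire function has at most two finite totally ramified values, an omitted value being counted among these: it follows from Nevanlinna's defect relation $\sum_a(\delta(a)+\theta(a))\le 2$, since $\delta(\infty)=1$ for an entire function leaves at most $1$ for the finite values while each $a_j$ contributes at least $\tfrac12$, and it can equally be read off from Ahlfors' theory of covering surfaces. Hence $\mathcal F$ is normal. The only genuinely delicate point is the reduction from non-normality of $\mathcal F$ to the entire function $g$, namely having Zalcman's lemma available and checking, via Hurwitz's theorem, both that the limit is pole-free and that a simple $a_j$-point of the limit would force one for some $f_n$; once this is in hand, the finish is standard value distribution theory.
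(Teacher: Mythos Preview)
Your proof is correct and follows essentially the same route the paper indicates: the paper does not give a detailed proof of this classical result but remarks that, via Zalcman's lemma, it reduces to the entire-function statement (Theorem~\ref{bloch1a}), which in turn is a special case of Nevanlinna's ramification inequality (Theorem~\ref{bloch1c}). Your argument makes this reduction explicit, including the Hurwitz steps to rule out poles and simple $a_j$-points of the Zalcman limit.
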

He then argues \cite[p.~87]{Bloch1926} that the principle of topological continuity should give
the following result.
\begin{thmy} \label{bloch2}
Let $D$ be a domain and let $D_1$, $D_2$ and $D_3$ be three disks in
$\C$ which have pairwise disjoint closures. 
Let $\mathcal F$ be the family of all functions holomorphic in $D$ 
such that there does not exist a domain $U$ with $\overline{U}\subset D$ 
such that $U$ is mapped univalently onto one of the disks $D_j$.
Then $\mathcal F$ is normal.
\end{thmy}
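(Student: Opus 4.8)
The plan is to use Zalcman's rescaling lemma --- the precise form of ``Bloch's normal family principle'' --- to reduce the statement to an assertion about entire functions in the plane, and then to invoke the Ahlfors islands theorem.

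Assume $\mathcal F$ is not normal at some point $z_0\in D$. By Zalcman's lemma there exist $f_n\in\mathcal F$, points $z_n\to z_0$ and numbers $\rho_n\to 0$ with $\rho_n>0$ such that $g_n(\zeta):=f_n(z_n+\rho_n\zeta)$ converges, locally uniformly in $\C$, to a nonconstant entire function $g$; here the limit is entire rather than merely meromorphic because the $g_n$ are holomorphic, and moreover $g$ has order at most $1$ by the Clunie--Hayman estimate, though this will not be needed. I will derive a contradiction by producing, for all large $n$, a domain $U_n$ with $\overline{U_n}\subset D$ that $f_n$ maps univalently onto one of $D_1,D_2,D_3$, contrary to $f_n\in\mathcal F$.

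Fix disks $\widetilde D_j\supset\overline{D_j}$ with pairwise disjoint closures, and disks $D_j'$ with $\overline{D_j}\subset D_j'$ and $\overline{D_j'}\subset\widetilde D_j$. By the Ahlfors islands theorem for entire functions --- a nonconstant entire function has a simple island over at least one of any three Jordan domains with pairwise disjoint closures --- there are an index $j$ and a bounded domain $W$ that $g$ maps conformally onto $\widetilde D_j$. Put $U':=W\cap g^{-1}(D_j')$; then $\overline{U'}$ is a compact subset of $W$ and $g$ maps $\partial U'$ onto $\partial D_j'$. Hence, for $n$ large, the curve $g_n(\partial U')$ lies in a prescribed small neighbourhood of $\partial D_j'$, so it is contained in $\widetilde D_j\setminus\overline{D_j}$ and winds exactly once about every point of $D_j$. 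By the argument principle $g_n$ therefore covers $D_j$ with total multiplicity one over $U'$; consequently $g_n^{-1}(D_j)\cap U'$ has a single component $U_n$, and $g_n$ maps $U_n$ conformally onto $D_j$ with $\overline{U_n}\subset U'$. Translating back, $f_n$ maps $z_n+\rho_n U_n$ univalently onto $D_j$, and since $\overline{U_n}$ is bounded, $\rho_n\to 0$, and $z_n\to z_0\in D$, the closure of $z_n+\rho_n U_n$ lies in $D$ for $n$ large. This is the desired contradiction, and $\mathcal F$ is therefore normal.

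Two remarks on the structure. The use of the slightly larger disks $\widetilde D_j$, and the return from them to $D_j$ through the intermediate disks $D_j'$, is what makes the Hurwitz-type transfer land on $D_j$ itself rather than on a slightly smaller disk, over which an island would not contradict membership in $\mathcal F$; apart from this device the reduction is routine. The substantial ingredient, and the step I expect to be the main obstacle in a self-contained treatment, is the Ahlfors islands theorem for entire functions invoked above. For polynomials it is elementary: if a polynomial $p$ of degree $d$ had no univalent island over each of three disjoint disks, then every component of each preimage would be a proper branched cover of degree at least $2$, forcing ramification at least $d/2$ over each of the three disks and hence more than the total ramification $d-1$ of $p$ in $\C$. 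For transcendental entire functions it follows from Ahlfors' theory of covering surfaces, the omitted value $\infty$ being precisely what lowers the number of required domains from five --- the sharp number for general meromorphic functions --- to three.
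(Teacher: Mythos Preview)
Your argument is correct, and it follows precisely the route the paper indicates: the paper does not itself write out a proof of Theorem~\ref{bloch2}, but states that it can be deduced from the entire-function version (Theorem~\ref{bloch2a}) via Zalcman's lemma, citing~\cite{Bergweiler1998} for the details. Your use of the enlarged disks $\widetilde D_j\supset D_j'\supset D_j$ to ensure the Hurwitz-type transfer lands on $D_j$ itself is exactly the technical point needed, and the argument-principle step showing that $g_n$ has a single simple island over $D_j$ inside $U'$ is handled correctly.
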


Theorem \ref{bloch1} was known when Bloch wrote this. 
Theorem \ref{bloch2} was not known then. It was first proved by Ahlfors six years
later, see \cite{Ahlfors1932a,Ahlfors1932b,Ahlfors1932c,Ahlfors1933},
his definite account being~\cite{Ahlfors1935}.
Theorem~\ref{bloch2}, and its generalization Theorem~\ref{bloch2c} below,
are among the principal results of his theory of covering surfaces
 which earned him one of the two first Fields medals in 1936.

Ahlfors did not use Theorem~\ref{bloch1} in his proof of Theorem~\ref{bloch2},
nor do the more recent proofs in~\cite{deThelin2005,Duval2014,Toki1957}.
Ahlfors does, however, give an interesting discussion of Bloch's principle of topological continuity
in~\cite[pp.~202--203]{Ahlfors1932c}.
A deduction of Theorem~\ref{bloch2} from Theorem~\ref{bloch1} -- and thus in some sense 
a confirmation of Bloch's principle of topological continuity -- was given in~\cite{Bergweiler1998}.
This deduction was based on a rescaling principle of Zalcman~\cite{Zalcman1975}.

Zalcman's lemma has become a major tool in the theory of normal families
by giving a rigorous formulation of \emph{Bloch's normal family principle}.
For example, it shows that Theorems \ref{bloch1} and \ref{bloch2} can be deduced from the
following corresponding results about entire functions.
\begin{thmx} \label{bloch1a}
Let $f$ be entire and let $a_1,a_2,a_3\in\C$ be distinct.
Suppose that all $a_j$-points are multiple, for all $j\in\{1,2,3\}$.
Then $f$ is constant.
\end{thmx}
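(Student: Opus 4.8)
The plan is to recognize Theorem~\ref{bloch1a} as (essentially) Nevanlinna's theorem on totally ramified values: the hypothesis that all $a_j$-points are multiple says precisely that $a_1,a_2,a_3$ are totally ramified values of $f$, and a nonconstant entire function can have at most two finite totally ramified values. I would prove it by an elementary ``ramification budget'' argument, handling polynomials and transcendental functions separately, since the conclusion is that $f$ is constant and the degree-$0$ case is trivial.

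First, suppose $f$ is a polynomial of degree $d\ge 1$, regarded as a holomorphic self-map of $\CC$ of degree $d$. Its total critical multiplicity is $2d-2$; since $f$ has a pole of order $d$ at $\infty$, the point $\infty$ absorbs $d-1$ of this, so the finite critical multiplicity is exactly $d-1$ (equivalently, $\deg f'=d-1$). On the other hand, if $a$ is a value all of whose preimages are multiple, with multiplicities $m_1,\dots,m_k$ satisfying $\sum_i m_i=d$ and each $m_i\ge2$, then $k\le d/2$ and the $a$-points contribute $\sum_i(m_i-1)=d-k\ge d/2$ to the finite critical multiplicity. Applying this to the three distinct values $a_1,a_2,a_3$, whose preimage sets are disjoint, gives a finite critical multiplicity of at least $3d/2$, which exceeds $d-1$ for every $d\ge1$ — a contradiction. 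Hence no such polynomial of positive degree exists.

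Second, suppose $f$ is transcendental. I would invoke Nevanlinna's Second Main Theorem for $f$ with the four targets $a_1,a_2,a_3,\infty$. Writing $T(r,f)$ for the characteristic, $\overline N(r,a)$ for the counting function of distinct $a$-points, and $S(r,f)$ for a term that is $o(T(r,f))$ as $r\to\infty$ outside a set of finite measure, this gives
\begin{equation*}
2\,T(r,f)\ \le\ \overline N(r,\infty)+\sum_{j=1}^{3}\overline N(r,a_j)+S(r,f).
\end{equation*}
Since $f$ is entire, $\overline N(r,\infty)=0$; and since every $a_j$-point is multiple, $\overline N(r,a_j)\le\tfrac12 N(r,a_j)+O(1)\le\tfrac12 T(r,f)+O(1)$ by the First Main Theorem. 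Substituting yields $2T(r,f)\le\tfrac32 T(r,f)+S(r,f)+O(1)$, i.e.\ $\tfrac12 T(r,f)\le S(r,f)+O(1)$, which contradicts $S(r,f)=o(T(r,f))$ together with $T(r,f)\to\infty$. Therefore $f$ is constant.

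The two inequalities are routine; the only genuine subtlety is that the Second Main Theorem cannot be applied uniformly to \emph{all} nonconstant $f$, because for a polynomial the error term $S(r,f)=O(\log r)$ is of the same order as $T(r,f)$, so the transcendental argument degenerates and the separate Riemann--Hurwitz count is really required. An alternative, which the later sections make natural, is to replace Nevanlinna's inequality by its analogue in Ahlfors's theory of covering surfaces for the Ahlfors--Shimizu characteristic; but the Nevanlinna-theoretic argument together with the elementary polynomial case is the most economical route.
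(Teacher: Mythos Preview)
Your argument is correct. The paper itself does not give a self-contained proof of Theorem~\ref{bloch1a}; it simply records that it is the special case $q=4$, $(m_1,m_2,m_3,m_4)=(2,2,2,\infty)$, $a_4=\infty$ of Nevanlinna's Theorem~\ref{bloch1c}, for which $\sum_j(1-1/m_j)=5/2>2$. Your proof is exactly the standard derivation of that special case from the Second Main Theorem, with the polynomial case handled separately by a critical-point count, so it is essentially the same approach spelled out in detail rather than cited.
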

\begin{thmy} \label{bloch2a}
Let $f$ be entire and let $D_1$, $D_2$ and $D_3$ be three disks in
$\C$ which have pairwise disjoint closures. 
Suppose that there does not exist $j\in\{1,2,3\}$ and a bounded domain
$U$ in $\C$ which is mapped univalently onto~$D_j$.
Then $f$ is constant.
\end{thmy}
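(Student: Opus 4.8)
The plan is to use Ahlfors' theory of covering surfaces, of which Theorem~\ref{bloch2a} is essentially the case of entire functions (for meromorphic functions one needs five disks in place of three). Assume $f$ is nonconstant and argue for a contradiction. The first step is to recast the hypothesis in terms of islands of covering surfaces. Fix $R>0$ with $\overline{D_1\cup D_2\cup D_3}\subset\{w:|w|<R\}$ and put $D_4:=\{w\in\CC:|w|>R\}$, so that $D_1,\dots,D_4$ are Jordan domains on $\CC$ with pairwise disjoint closures. For $r>0$ let $\mathcal F_r$ be the covering surface of $\CC$ given by $f|_{\overline{D(0,r)}}$, with $A(r)$ its mean number of sheets, $L(r)$ the spherical length of its relative boundary $f(\{z:|z|=r\})$, and $n(r,D_\nu)$, $\overline n(r,D_\nu)$ the number of sheets, respectively the number of islands, over $D_\nu$. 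Two observations make the hypothesis usable. First, if $U\subset\C$ is a bounded domain on which $f$ is univalent with $f(U)=D_j$, then $f|_U$ is a biholomorphism onto $D_j$ and hence proper, so $f(\partial U)\subset\partial D_j$; consequently $U$ is a connected component of $f^{-1}(D_j)$ (it is open, and closed in $f^{-1}(D_j)$), and therefore a simple island of $\mathcal F_r$ over $D_j$ as soon as $\overline U\subset D(0,r)$. Thus the hypothesis says exactly that no $\mathcal F_r$ has a simple island over any of $D_1,D_2,D_3$. Second, since $f$ is entire, $\{z:|f(z)|>R\}$ has no relatively compact component by the maximum principle, so $\mathcal F_r$ has no island over $D_4$: $\overline n(r,D_4)=0$ for all $r$.

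The second step invokes Ahlfors' two fundamental theorems on covering surfaces (see~\cite{Ahlfors1935}). Applied to the $q=4$ Jordan domains $D_1,\dots,D_4$, the second fundamental theorem supplies a constant $h>0$ depending only on these domains with
\[
2\,A(r)\ \le\ \sum_{\nu=1}^{4}\overline n(r,D_\nu)+h\,L(r)\ =\ \overline n(r,D_1)+\overline n(r,D_2)+\overline n(r,D_3)+h\,L(r)
\]
for all $r$, the equality using $\overline n(r,D_4)=0$. Since no $D_j$ has a simple island, every island over $D_j$ covers $D_j$ at least twice, whence $2\,\overline n(r,D_j)\le n(r,D_j)$; and the first fundamental theorem gives $n(r,D_j)\le A(r)+h\,L(r)$ after enlarging $h$. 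Inserting these bounds into the displayed inequality yields $2\,A(r)\le\tfrac32\,A(r)+c\,L(r)$, that is,
\[
A(r)\ \le\ c'\,L(r)\qquad\text{for all }r,
\]
with $c'$ depending only on $D_1,\dots,D_4$. This is where the hypothesis that $f$ is entire really enters: it furnished the fourth domain, hence the coefficient $q-2=2$ on the left; for three domains the second fundamental theorem would read $A(r)\le\tfrac32\,A(r)+\dots$ and give nothing.

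The last step closes the estimate by the length--area inequality. Cauchy--Schwarz gives $L(r)^2\le C\,\frac{dA}{d\log r}$ for a suitable constant $C>0$, so $A(r)^2\le (c')^2 C\,\frac{dA}{d\log r}$ and hence $\frac{d}{d\log r}\!\left(\frac{-1}{A(r)}\right)\ge\frac{1}{(c')^2 C}>0$. Since $f$ is nonconstant there is $r_0$ with $A(r_0)>0$, and $A$ is nondecreasing, so $A(r)\ge A(r_0)>0$ for $r\ge r_0$; integrating from $\log r_0$ to $\log r$ gives
\[
\frac{1}{A(r_0)}\ \ge\ \frac{1}{A(r_0)}-\frac{1}{A(r)}\ \ge\ \frac{\log r-\log r_0}{(c')^2 C}\ \longrightarrow\ \infty \qquad(r\to\infty),
\]
which is absurd. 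Hence $f$ is constant. I expect the genuine obstacle to be Ahlfors' theory of covering surfaces itself, which is deep; granting it, the remaining steps are routine, the only points requiring care being the reformulation of the island hypothesis and the bookkeeping that turns the two fundamental theorems into the inequality $A(r)\le c'L(r)$.
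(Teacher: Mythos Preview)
Your argument is correct and is exactly the route the paper indicates: Theorem~\ref{bloch2a} is derived there from Ahlfors' Scheibensatz (Theorem~\ref{bloch2c}) with $q=4$, $m_1=m_2=m_3=2$, $m_4=\infty$ and a fourth domain about~$\infty$, and you have simply written out the covering-surface proof of that special case, including the standard length--area endgame. (A small shortcut: the sharp form of Ahlfors' second fundamental theorem already has the number of \emph{simple} islands on the right, so from $n_1(r,D_\nu)=0$ for all four $\nu$ one obtains $2A(r)\le hL(r)$ directly, without the detour through $\overline n\le n/2$ and the first fundamental theorem.)
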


As a second example for his principle of topological continuity Bloch
considers the following result.
\begin{thmx} \label{bloch1b}
Let $f\colon\C\to\C$ be a non-constant entire function of order less than $1$ and let $a_1,a_2\in\C$ be distinct.
Then there exists $j\in\{1,2\}$ such that $f$ has a simple $a_j$-point.
\end{thmx}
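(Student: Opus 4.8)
The plan is to establish the contrapositive: if \emph{every} $a_1$-point and \emph{every} $a_2$-point of $f$ has multiplicity at least $2$, then $f$ is either constant or of order at least $1$. The starting point is the auxiliary function
\[
 H \;:=\; \frac{(f')^2}{(f-a_1)(f-a_2)}\;=\;\frac{f'}{f-a_1}\cdot\frac{f'}{f-a_2}.
\]
First I would verify that $H$ is entire: this is clear away from the $a_j$-points, and at an $a_1$-point $z_0$ of multiplicity $m\ge2$ one has $f(z_0)=a_1\ne a_2$, so $(f')^2$ vanishes there to order $2(m-1)$, the factor $(f-a_1)(f-a_2)$ to order exactly $m$, and $2(m-1)-m=m-2\ge0$; the $a_2$-points are treated identically. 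Since $f$ is non-constant, $f'\not\equiv0$, so $H\not\equiv0$. The crucial point is that $H$ is a \emph{polynomial}: as $H$ is entire we have $T(r,H)=m(r,H)$, and the factorisation above together with the standard estimate for the logarithmic derivative of a finite-order entire function yields
\[
 T(r,H)=m(r,H)\;\le\;m\!\left(r,\tfrac{f'}{f-a_1}\right)+m\!\left(r,\tfrac{f'}{f-a_2}\right)\;=\;O(\log r),
\]
so $H$ is a polynomial; put $d:=\deg H\ge0$ and fix a square root $\eta$ of its leading coefficient.

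Next I would exploit the identity $(F')^2=H(z)(F^2-b^2)$, where $F:=f-\tfrac12(a_1+a_2)$ and $b:=\tfrac12(a_2-a_1)\ne0$ (note $F'=f'$). In passing this shows $f$ is transcendental: if $f$ were a polynomial, comparing degrees would force $\deg H=\deg(F')^2-\deg(F^2-b^2)=-2$, which is absurd. Now pick a ray $L=\{re^{i\theta_0}:r\ge r_0\}$ whose direction $\theta_0$ avoids the (finitely many) arguments of zeros of $H$; on $L$, $H$ has no zeros, so there is a single-valued analytic branch $\sigma$ of $\sqrt H$, with $\sigma(re^{i\theta_0})\sim\pm\eta\,r^{d/2}e^{id\theta_0/2}$ as $r\to\infty$. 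On $L$ set $G:=F'/\sigma$, so $G^2=F^2-b^2$, and $E:=F+G$. Then $E$ vanishes nowhere on $L$ — if $E=0$ then $G=-F$ and $b^2=F^2-G^2=0$ — one has $F=\tfrac12(E+b^2/E)$ and $G=\tfrac12(E-b^2/E)$, and equating $F'=\sigma G$ with $F'=\dfrac{E'(E^2-b^2)}{2E^2}$ gives $E'/E=\sigma$ on $L$; the isolated points of $L$ where $E^2=b^2$, namely the $a_j$-points of $f$ on $L$, cause no difficulty since there $E=\pm b\ne0$ and all functions involved stay analytic. Integrating,
\[
 \log\bigl|E(re^{i\theta_0})\bigr|\;=\;\re\!\int_{r_0}^{r}\sigma(se^{i\theta_0})\,e^{i\theta_0}\,ds\;+\;O(1),
\]
and the leading term of the integral is $\dfrac{\pm\eta}{\,d/2+1\,}\,e^{i(d/2+1)\theta_0}\,r^{(d+2)/2}\bigl(1+o(1)\bigr)$. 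Choosing $\theta_0$ (only required to avoid finitely many directions) and, if necessary, the sign of $\sigma$, so that this leading term has positive real part, I obtain $\log|E(re^{i\theta_0})|\gg r^{(d+2)/2}$, hence $|F(re^{i\theta_0})|\ge\tfrac14|E(re^{i\theta_0})|$ for large $r$, hence $\log M(r,f)=\log M(r,F)+O(1)\gg r^{(d+2)/2}$. Thus the order of $f$ is at least $(d+2)/2\ge1$, contradicting the hypothesis. (As a sanity check, $f=\cos z$ with $a_1=1$, $a_2=-1$ gives $H\equiv-1$, $d=0$, and the argument produces the sharp bound: order $\ge1$.)

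The genuinely routine ingredients are the local computation that $H$ is entire and the invocation of the logarithmic-derivative estimate. The step I expect to require the most care is the reduction to the first-order equation $E'/E=\sigma$ along a single ray and the extraction of the growth rate from it: in particular one must deal with the fact that $\sqrt H$ need not be single-valued on $\C$ — which is precisely why one restricts to a ray $L$ on which $H$ is zero-free — and one must check that the $a_j$-points lying on $L$ do not obstruct the identity $E'/E=\sigma$.
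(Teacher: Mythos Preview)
Your proof is correct and follows essentially the same route as the paper: form the auxiliary function $H=(f')^2/((f-a_1)(f-a_2))$, use the multiplicity hypothesis to see that $H$ is entire, and apply the lemma on the logarithmic derivative to conclude that $H$ is a polynomial. The paper then invokes the global representation $f(z)=\cos\bigl(\int\sqrt{H(z)}\,dz\bigr)$ (after normalizing $a_{1,2}=\pm1$) and reads off $\rho(f)\ge1$ from it, whereas you carry out the equivalent computation along a single ray via the substitution $E=F+G$ satisfying $E'/E=\sqrt{H}$; this is the same idea, just localized to sidestep the multivaluedness of $\sqrt{H}$.
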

This theorem can be proved using Nevanlinna theory, and Bloch was aware of this
proof~\cite{Bloch1925}.
We will sketch a proof after Theorem~\ref{bloch2c} below.

Bloch argues \cite[p.~88]{Bloch1926} that Theorem~\ref{bloch1b} together with his
principle of topological continuity should yield the following result.
\begin{thmy} \label{bloch2b}
Let $f\colon\C\to\C$ be a non-constant entire function of order less than $1$ and let 
$D_1$ and $D_2$ be two disks in $\C$ with disjoint closures.
Then there exists $j\in\{1,2\}$ and a domain
$U$ in $\C$ which is mapped univalently onto~$D_j$.
\end{thmy}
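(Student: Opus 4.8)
The plan is to argue by contradiction: suppose that no component of $f^{-1}(D_1)$ and no component of $f^{-1}(D_2)$ is mapped univalently onto the corresponding disk, so in particular every bounded such component is mapped properly onto $D_j$ with degree $\geq 2$. One cannot deduce the statement from Theorem~\ref{bloch1b}: the hypothesis that all islands are multiple is strictly weaker than the hypothesis that all $a$-points are multiple (a degree-$d$ island still contains $d$ simple points over a generic value), so the proof has to pass through Ahlfors' theory of covering surfaces. First one disposes of polynomials: if $f$ has degree $d$, the components of $f^{-1}(D_j)$ are proper maps onto $D_j$ whose degrees sum to $d$, and since $D_1\cap D_2=\emptyset$ each critical point lies over at most one $D_j$; if all these components had degree $\geq 2$, the ramification over $D_1$ and that over $D_2$ would each be at least $d/2$, for a total of at least $d$, exceeding the total ramification $d-1$ of a polynomial. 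So assume $f$ transcendental, let $S(r)$ be the mean sheet number of the covering surface $F_r$ of $f|_{\{|z|<r\}}$ over $\widehat{\mathbb C}$, and $L(r)$ the spherical length of its boundary $f(\{|z|=r\})$; then $S(r)\to\infty$, and since $f$ has finite order the length--area inequality produces a sequence $r_n\to\infty$ with $L(r_n)=o(S(r_n))$, along which the error terms in Ahlfors' inequalities are negligible.

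Next I would dispose of an easy case. By the Denjoy--Carleman--Ahlfors theorem an entire function of order less than $1$ has at most one finite asymptotic value $a$, and for a finite-order function a transcendental singularity of $f^{-1}$ over a finite point is necessarily logarithmic. If such an $a$ lies in $\overline{D_1}\cup\overline{D_2}$, say in $\overline{D_1}$, the logarithmic tract over $a$ covers a fixed sub-region of $D_1$ infinitely often, so the part of the covering of $D_1$ coming from islands satisfies $n_{\mathrm{isl}}(r,D_1)\leq(1-c)S(r)+O(L(r))$ for some $c>0$. Ahlfors' second fundamental theorem, applied to $D_1$, $D_2$ and a neighbourhood $D_3$ of $\infty$ over which there are no islands (since $f$ is entire), gives $S(r)\leq\overline n(r,D_1)+\overline n(r,D_2)+hL(r)$; combining this with $\overline n(r,D_j)\leq\tfrac12 n_{\mathrm{isl}}(r,D_j)$, which holds because every island is multiple, one obtains $S(r_n)\leq(1-\tfrac c2)S(r_n)+o(S(r_n))$, a contradiction. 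Hence from now on both $f^{-1}(D_1)$ and $f^{-1}(D_2)$ consist entirely of multiple islands.

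In this main case the same inequalities only give $S(r_n)\leq S(r_n)+o(S(r_n))$, and so the situation is extremal: along $r_n$ one has $\overline n(r_n,D_j)=\tfrac12 S(r_n)(1+o(1))$ and $n(r_n,D_j)=S(r_n)(1+o(1))$ for $j=1,2$; essentially every island over $D_1$ or $D_2$ has degree exactly $2$; these islands carry all but an $o(S(r_n))$ part of the covering surface; and, by the Riemann--Hurwitz relation for $F_{r_n}$, essentially all of the ramification of $f$ in $\{|z|<r_n\}$ is concentrated over $D_1\cup D_2$. Heuristically, over most of the plane $f$ behaves like a branched covering with exactly three singular values: one in $D_1$, one in $D_2$, and $\infty$.

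Turning this heuristic into a rigidity statement is the heart of the matter and, I expect, the main obstacle. The plan is to use a quasiconformal deformation to replace $f$ by an entire function $\tilde f$ of the same order with exactly three singular values --- the two near-critical values in $D_1$ and $D_2$ straightened into genuine critical values, with no other singularities surviving --- the deformation being a homeomorphism of the plane and hence carrying univalent islands to univalent islands. A transcendental entire function with finitely many singular values has a line complex whose combinatorics fix its order; being transcendental and of order less than $1$, $\tilde f$ must have order exactly $\tfrac12$, with a line complex of ``$\cos\sqrt z$ type'': a half-infinite ladder closed off by a single cell. That closing cell forces a univalent island over $D_1$ or over $D_2$ --- just as $\cos\sqrt z$ is univalent near the origin --- and pulling this back contradicts the assumption. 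The obstruction genuinely disappears at order $1$: over small disks about $1$ and $-1$ the function $\cos z$ has only double islands, which is why ``order less than $1$'' is needed and cannot be relaxed.
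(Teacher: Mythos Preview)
Your overall strategy --- quasiconformal surgery to reduce to an entire function whose $a_j$-points are genuinely multiple, then invoke the classical rigidity --- is exactly the paper's. But the route you take to set up and justify the surgery is different from the paper's, and the differences matter.

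The paper does not pass through Ahlfors' second fundamental theorem at all. Instead it carries out a direct combinatorial analysis of the preimage graph $\Gamma^*=f^{-1}(\Gamma)$, where $\Gamma$ is a curve system on the sphere separating $D_1$, $D_2$ and a neighbourhood of~$\infty$. From this analysis (together with the Denjoy--Carleman--Ahlfors bound on the number of tracts) one obtains \emph{exact} finiteness statements: only finitely many faces of $\Gamma^*$ have multiplicity different from~$2$, and only finitely many critical points of $f$ lie outside $D_1\cup D_2$. Your Ahlfors-theoretic extremality argument yields only that such exceptional points contribute $o(S(r_n))$ to the covering number, which still allows infinitely many of them; so your claim that one can deform to $\tilde f$ with \emph{exactly three} singular values is not justified by what precedes it. The paper does not attempt to kill the extra critical points; it leaves them in place, shows there are only finitely many, and absorbs them into the rational function $R$ in the representation $h(z)=\cos\!\bigl(\int\!\sqrt{R(z)}\,dz\bigr)$.

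Second, the assertion that $\tilde f$ has ``the same order'' is where the real work lies, and you do not address it. A generic quasiconformal conjugacy only gives $\rho(h)\le K(\phi)\,\rho(f)$; to get equality one needs the Teichm\"uller--Wittich--Belinskii theorem, which in turn requires the support of the Beltrami coefficient to have finite logarithmic area. The paper obtains this control by first proving the finiteness above (so the dilatation is uniformly bounded), then using the explicit cosine representation of~$h$ to see that the surgery locus $\{|h|<r\}$ satisfies $\int_A|z|^{-2}\,dx\,dy<\infty$.

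Finally, your endgame is more circuitous than necessary and the ``pull back'' step is not sound as stated. The surgery is not a homeomorphic reparametrisation of the domain --- it genuinely changes the map inside each island --- so simple islands of $\tilde f$ need not correspond to simple islands of~$f$. What one actually gets (and what the paper uses) is cleaner: if $f$ has no simple island over $D_1$ or $D_2$, then every face of $\Gamma^*$ has multiplicity $\ge 2$, the surgery produces $h$ with \emph{all} $a_1$- and $a_2$-points multiple, and the classical case (Theorem~\ref{bloch1b}, or Selberg's case~$(ii)$) gives $\rho(h)\ge 1$ directly. Combined with $\rho(h)=\rho(f)$ from Teichm\"uller--Wittich--Belinskii, this is the contradiction; no line-complex classification or identification with $\cos\sqrt z$ is needed.
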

So far as we know, there is no proof of Theorem~\ref{bloch2b} in the literature, neither
by Bloch nor by someone else.
We will give a proof of (a generalization of) Theorem~\ref{bloch2b} in section~\ref{proofs1},
using some ideas of Goldberg and Tairova~\cite{Goldberg1963}.

Theorems \ref{bloch1a} and \ref{bloch2a} are actually special cases of 
more general results. The generalization of Theorem~\ref{bloch1a} (which Bloch also knew) is the following result due to Nevanlinna~\cite[Chapitre~IV, no.~51]{Nevanlinna1929}.
\begin{thmx} \label{bloch1c}
Let $f\colon\C\to\bC:=\C\cup\{\infty\}$ be meromorphic and $q\in\N$.
Let $a_1,\dots,a_q\in\C$ be distinct and $m_1,\dots,m_q\in\N\cup\{\infty\}$.
If, for all $j\in\{1,\dots,q\}$,
all $a_j$-points have multiplicity at least $m_j$,
then
\begin{equation} \label{a1}
\sum_{j=1}^q \left(1-\frac{1}{m_j}\right) \leq 2
\end{equation}
or $f$ is constant.
\end{thmx}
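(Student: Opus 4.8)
The plan is to handle the trivial reductions first and then split into the rational and transcendental cases, the latter via classical Nevanlinna theory. One may assume $f$ non-constant, $q\ge 3$ (otherwise $\sum_{j=1}^q(1-1/m_j)\le q\le 2$ already), and every $m_j\ge 2$, since a value with $m_j=1$ contributes $0$ to the left side of \eqref{a1} and can be dropped. It is convenient to read $m_j=\infty$ as $1/m_j=0$; the case of an omitted value then follows from the case of finite $m_j$ by monotonicity, or directly from the estimates below.

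For the rational case, suppose $f$ is a non-constant rational map of degree $d\ge 1$, viewed as a branched cover $f\colon\bC\to\bC$. For each $j$, every point of the fibre $f^{-1}(a_j)$ has local multiplicity at least $m_j$, so that fibre has at most $d/m_j$ elements and the ramification carried over $a_j$ is at least $d-d/m_j=d(1-1/m_j)$. Since the fibres over the distinct points $a_j$ are disjoint, summing over $j$ and comparing with the total ramification $2d-2$ given by the Riemann--Hurwitz formula yields $\sum_{j=1}^q d(1-1/m_j)\le 2d-2$; dividing by $d$ gives \eqref{a1}, with $2$ even replaced by $2-2/d$.

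For the transcendental case I would invoke Nevanlinna's second main theorem in its ramified form,
\begin{equation}
(q-2)\,T(r,f)\ \le\ \sum_{j=1}^q \overline{N}\!\left(r,\frac{1}{f-a_j}\right)+S(r,f),
\end{equation}
with $S(r,f)=o(T(r,f))$ as $r\to\infty$ outside a set of finite measure. The ramification hypothesis gives $\overline{n}\bigl(r,1/(f-a_j)\bigr)\le \tfrac{1}{m_j}\,n\bigl(r,1/(f-a_j)\bigr)$, hence the same inequality for the integrated counting functions; together with the first main theorem $N\bigl(r,1/(f-a_j)\bigr)\le T(r,f)+O(1)$ this bounds the right-hand sum by $\bigl(\sum_{j=1}^q 1/m_j\bigr)T(r,f)+O(1)$. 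Substituting and rearranging gives
\begin{equation}
\left(q-2-\sum_{j=1}^q\frac{1}{m_j}\right)T(r,f)\ \le\ S(r,f)+O(1).
\end{equation}
Because $f$ is transcendental, $T(r,f)/\log r\to\infty$, so the right side is $o(T(r,f))$ along radii avoiding the exceptional set; dividing by $T(r,f)$ and letting $r\to\infty$ forces $q-2-\sum_{j=1}^q 1/m_j\le 0$, which is \eqref{a1}.

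The only delicate point is this final limiting step in the transcendental case: one must know that $T(r,f)$ grows fast enough ($T(r,f)/\log r\to\infty$) for $S(r,f)$ to be negligible and that the exceptional set can be avoided by a suitable sequence of radii. Everything else is routine: the reductions, the elementary fibre count underlying the Riemann--Hurwitz argument, and the two standard Nevanlinna estimates in the transcendental case.
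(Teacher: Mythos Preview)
Your argument is correct and is the standard classical proof: Riemann--Hurwitz for the rational case and Nevanlinna's second main theorem for the transcendental case. Note that the paper does not supply its own proof of this statement; Theorem~\ref{bloch1c} is quoted as a known result of Nevanlinna with a reference to~\cite[Chapitre~IV, no.~51]{Nevanlinna1929}, so there is nothing to compare against beyond observing that your write-up matches the classical approach.
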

Here $m_j=\infty$ means that $f$ does not take the value $a_j$ at all, and we 
put $1/m_j=1/\infty=0$ in this case.

To state the corresponding generalization of Theorem~\ref{bloch2a} we introduce some terminology.
Let $f\colon\C\to\bC$ be meromorphic and let $D$ be a Jordan domain in $\bC$.
A connected component $U$ of $f^{-1}(D)$ is called an \emph{island} of $f$ over $D$ if it is
bounded and simply-connected.
Then $f\colon U\to D$ is a proper mapping. The degree of this proper mapping is called the \emph{multiplicity}
of the island~$U$.
With this terminology we can phrase the generalization of Theorem~\ref{bloch2a}.
It is due to Ahlfors, who called it ``Scheibensatz'' \cite[p.~190]{Ahlfors1935}.
\begin{thmy} \label{bloch2c}
Let $f\colon\C\to\bC$ be meromorphic and $q\in\N$.
Let $D_1,\dots,D_q$ be Jordan domains in $\bC$ with 
pairwise disjoint closures  and let $m_1,\dots,m_q\in\N\cup\{\infty\}$.
If, for all $j\in\{1,\dots,q\}$, all islands over $D_j$ have multiplicity at least~$m_j$,
then \eqref{a1} holds or $f$ is constant.
\end{thmy}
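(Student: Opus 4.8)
The plan is to derive \eqref{a1} from Ahlfors' main inequality of the theory of covering surfaces, applied to the covering surfaces $F_r$ of $\bC$ determined by the restrictions of $f$ to the disks $\{|z|<r\}$. Write $S(r)$ for the mean number of sheets of $F_r$ (equivalently, $S(r)=dT_0(r)/d\log r$, where $T_0$ denotes the Ahlfors--Shimizu characteristic of $f$) and $L(r)$ for the spherical length of $f(\{|z|=r\})$. The one deep ingredient I would invoke is the island form of Ahlfors' Second Fundamental Theorem: there is a constant $h>0$, depending only on $D_1,\dots,D_q$, such that
\[
(q-2)\,S(r)\ \le\ \sum_{j=1}^q \overline{n}(r,D_j)\ +\ h\,L(r)\qquad\text{for every }r>0 ,
\]
where $\overline{n}(r,D_j)$ is the number of islands of $f$ over $D_j$ contained in $\{|z|<r\}$, each island counted once regardless of its multiplicity. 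I would treat this as a black box: establishing it is the genuinely hard part, resting on Ahlfors' metric--topological principle, i.e.\ on bounding the Euler characteristic of $F_r$ — and of the surfaces obtained from it by deleting the islands over the $D_j$ — in terms of the boundary length $L(r)$.

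Granting this, the multiplicity hypothesis enters through a short counting step. Fix $j$ with $m_j<\infty$. By the First Fundamental Theorem of the theory, the mean number of times $F_r$ covers $D_j$ differs from $S(r)$ by at most $h_j L(r)$, with $h_j$ depending only on $D_j$; on the other hand, the islands of $f$ over $D_j$ lying in $\{|z|<r\}$ are pairwise disjoint and each maps onto all of $D_j$ as a proper branched covering of degree at least $m_j$, so this same mean covering number is also at least $m_j\,\overline{n}(r,D_j)$. Hence
\[
\overline{n}(r,D_j)\ \le\ \frac{1}{m_j}\bigl(S(r)+h_j L(r)\bigr) ,
\]
and for $m_j=\infty$ there are no islands over $D_j$, so this still holds with the convention $1/m_j=0$. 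Substituting into Ahlfors' inequality gives, for all $r>0$,
\[
\Bigl(q-2-\sum_{j=1}^q\frac{1}{m_j}\Bigr)S(r)\ \le\ C\,L(r) ,
\]
with $C$ depending only on $D_1,\dots,D_q$.

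To conclude, suppose $f$ is non-constant. Then $S(r)$ stays bounded away from $0$ while $L(r)=o(S(r))$ as $r\to\infty$ outside a set of finite logarithmic measure; these are standard elementary estimates of the theory, proved just as the error term in Nevanlinna's second main theorem (when $f$ is transcendental one has $S(r)\to\infty$ and uses $L(r)^2\le c\,rS'(r)$ together with a Borel-type lemma, and when $f$ is rational $S(r)$ tends to the degree and $L(r)\to0$). Letting $r\to\infty$ through such values in the last display forces $q-2-\sum_j 1/m_j\le0$, i.e.\ \eqref{a1}. The only subtlety in the deduction itself is that $\overline{n}(r,D_j)$ must count each island once, with no weighting by multiplicity: the Weierstrass $\wp$-function, which over a small disk about each of its four critical values has islands but no \emph{simple} island, shows that the analogue of Ahlfors' inequality with the number of simple islands on the right would be false, whereas the version above survives because a $d$-fold island is counted once in $\overline{n}(r,D_j)$ yet contributes $d$ to the mean covering number. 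I therefore expect the real obstacle to be the proof of Ahlfors' covering theorem; once it is granted, the argument above is essentially bookkeeping. (In particular one cannot reduce Theorem~\ref{bloch2c} to Theorem~\ref{bloch1c}, which is the special case obtained by taking the $D_j$ to be small disks about the $a_j$.)
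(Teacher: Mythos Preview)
The paper does not give its own proof of Theorem~\ref{bloch2c}: it is stated there as a classical result of Ahlfors (the ``Scheibensatz'' from~\cite{Ahlfors1935}) and used purely as background. So there is nothing in the paper to compare your proposal against.

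That said, your sketch is correct and is essentially Ahlfors' own argument. The three ingredients you isolate are exactly the right ones: the island form of the Second Fundamental Theorem giving $(q-2)S(r)\le\sum_j\overline n(r,D_j)+hL(r)$; the First Fundamental Theorem bound $|S(r,D_j)-S(r)|\le h_jL(r)$ combined with the observation that each island in $\{|z|<r\}$ over $D_j$ contributes its full multiplicity $\ge m_j$ to $S(r,D_j)$, yielding $\overline n(r,D_j)\le m_j^{-1}(S(r)+h_jL(r))$; and the length--area estimate $L(r)=o(S(r))$ off a set of finite logarithmic measure. Your remark that $\overline n$ must count islands without multiplicity, and your $\wp$-function caveat, are both to the point. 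The only thing I would not phrase quite as you do is the parenthetical at the very end: Theorem~\ref{bloch1c} is not literally the special case of Theorem~\ref{bloch2c} with small disks (a totally ramified value does not force every island over a disk about it to be multiple), although the implication does run the other way by shrinking the $D_j$.
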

Theorems \ref{bloch1a} and \ref{bloch2a} are obtained from 
Theorems \ref{bloch1c} and \ref{bloch2c} by choosing $q=4$, $m_1=m_2=m_3=2$, $m_4=\infty$
and $a_4=\infty$.

There are also normal family analogues of
Theorems \ref{bloch1c} and \ref{bloch2c} according to Bloch's normal
family principle. These can be obtained from 
Theorems \ref{bloch1c} and \ref{bloch2c} via Zalcman's lemma.

Suppose now that we have equality in \eqref{a1}; that is, 
\begin{equation} \label{a4}
\sum_{j=1}^q \left(1-\frac{1}{m_j}\right) = 2.
\end{equation}
Then, apart from permutation of the $m_j$, we have one of the following six cases:
\begin{itemize}
\item[$(i)$] $q=2$, $(m_1,m_2)=(\infty,\infty)$.
\item[$(ii)$] $q=3$, $(m_1,m_2,m_3)=(2,2,\infty)$.
\item[$(iii)$] $q=4$, $(m_1,m_2,m_3,m_4)=(2,2,2,2)$.
\item[$(iv)$] $q=3$, $(m_1,m_2,m_3)=(2,3,6)$.
\item[$(v)$] $q=3$, $(m_1,m_2,m_3)=(2,4,4)$.
\item[$(vi)$] $q=3$, $(m_1,m_2,m_3)=(3,3,3)$.
\end{itemize}
Selberg~\cite[Satz~II]{Selberg1928} determined all transcendental
meromorphic functions $f$ of finite order such that
$f$ has only finitely many $a_j$-points of multiplicity less than~$m_j$,
with the $m_j$ chosen such that~\eqref{a4} holds.
He used this to determine the possible orders of these functions~\cite[Satz~IV--VII]{Selberg1928}.
Let $\rho(f)$ denote the order of a meromorphic function~$f$;
see~\cite[Chapter~2, Section~1]{Goldberg2008} for the definition of the order 
and other concepts from the theory of meromorphic functions used in the sequel.

We briefly summarize Selberg's reasoning and results.
First, in case $(i)$ we assume that $(a_1,a_2)=(0,\infty)$
and find that the function $f$ has the form $f=Qe^P$ with a rational
function $Q$ and a polynomial~$P$.
Thus $\rho(f)\in\N$ in this case.

In case $(ii)$ we assume that $a_1=1$, $a_2=-1$ and $a_3=\infty$.
Then 
\begin{equation} \label{a2}
R(z)=\frac{f'(z)^2}{f(z)^2-1}
\end{equation}
has poles only at the simple $\pm 1$-points of $f$. 
Thus $R$ has only finitely many poles. 
Since $f$ has finite order, the lemma on the logarithmic
derivative~\cite[Chapter~3, Section~1]{Goldberg2008}
yields that Nevanlinna's proximity functions $m(r,R)$ satisfies 
$m(r,R)=O(\log r)$.  We conclude that $R$ is a rational function.
Hence $f$ has the form 
\begin{equation} \label{a2a}
f(z)=\cos\!\left( \int \! \sqrt{R(z)}dz\right) .
\end{equation}
This implies that $\rho(f)$ is an integer multiple of $1/2$.
(Equation~\eqref{a2a} and its implication for $\rho(f)$ were actually already
obtained by Valiron~\cite[p.~77]{Valiron1923}, before Selberg's work
and in fact before Nevanlinna developed his theory.)

If all $\pm 1$-points are multiple, then $R$ has no poles and is thus
a polynomial. In this case we find that $\rho(f)\geq 1$.
Note that this proves Theorem~\ref{bloch1b}.

In cases $(iii)$--$(vi)$, assume that all $a_j$ are in $\C$ and that $M$ is the least 
common multiple of the $m_j$. We then find that
\begin{equation} \label{a3}
R(z)=\frac{f'(z)^M}{\prod_{j=1}^q (f(z)-a_j)^{(m_j-1)M/m_j}}
\end{equation}
is a rational function and $f$ has the form 
\begin{equation} \label{a3a}
f(z)=E\!\left( \int \! \sqrt[M]{R(z)}dz\right) .
\end{equation}
where $E$ is an elliptic function.

We only state the conclusions about the order of $f$ that Selberg drew from this.
\begin{thmx} \label{selberg}
Let $f\colon\C\to\bC$ be a transcendental 
meromorphic function of finite order, $q\in\N$,
$a_1,\dots,a_q\in\bC$ distinct and $m_1,\dots,m_q\in\N\cup\{\infty\}$.
Suppose that~\eqref{a4} holds 
so that we have one of the cases $(i)$--$(vi)$ listed above.

If, for all $j\in\{1,\dots,q\}$, all but finitely many $a_j$-points of
$f$ have multiplicity at least~$m_j$,
then $\rho(f)\in\N$ in case $(i)$,
$2\rho(f)\in\N$ in case~$(ii)$,
$\rho(f)\in\N_0$ in case~$(iii)$,
$3\rho(f)\in\N_0$ in case~$(iv)$,
$2\rho(f)\in\N_0$ in case~$(v)$ 
and
$3\rho(f)/2\in\N_0$ in case~$(vi)$.

If all (and not only all but finitely many) $a_j$-points
have multiplicity at least~$m_j$,
then in addition $\rho(f)\geq 1$ in cases $(i)$ and $(ii)$ while $\rho(f)\geq 2$ in
cases $(iii)$--$(vi)$.
\end{thmx}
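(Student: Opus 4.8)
The plan is to follow Selberg~\cite{Selberg1928}: in each case one exhibits an auxiliary function which, thanks to the extremality~\eqref{a4}, turns out to be rational, and this forces $f$ into an explicit form $E\circ G$ whose order can be read off. Case~$(i)$ is immediate. Normalizing $(a_1,a_2)=(0,\infty)$, the hypothesis says $f$ has only finitely many zeros and only finitely many poles, so $f=Qe^{P}$ with $Q$ rational and $P$ entire; as $\rho(f)<\infty$, $P$ is a polynomial and $\rho(f)=\deg P$, and as $f$ is transcendental $P$ is nonconstant, so $\rho(f)\in\N$, in particular $\rho(f)\ge 1$ (the stronger hypothesis, that \emph{all} $0$-points and poles be multiple, adds nothing here, since then $Q$ is constant and the same conclusion holds).

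For cases~$(ii)$–$(vi)$ the first task is to prove that the function $R$ of~\eqref{a2}, respectively~\eqref{a3}, is rational. Since the exponents $(m_j-1)M/m_j$ are non-negative integers, $R$ is a single-valued meromorphic function on~$\C$; a local computation at the $a_j$-points, at the poles of $f$ and at the zeros of $f'$, using~\eqref{a4}, shows that $R$ has a pole at $z_0$ exactly when $f(z_0)=a_j$ with multiplicity less than~$m_j$, while every critical point of $f$ — including every pole of order at least~$2$ and every $a_j$-point of multiplicity greater than~$m_j$ — is at worst a zero of~$R$. Hence $R$ has finitely many poles (arising from the exceptional $a_j$-points and, in case~$(ii)$, from the finitely many poles of $f$). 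In case~$(ii)$, and more generally whenever one of the $a_j$ is~$\infty$ with $m_j=\infty$, the function $R$ is literally a product of integer powers of the logarithmic derivatives $f'/(f-a_i)$, so the lemma on the logarithmic derivative gives $m(r,R)=O(\log r)$, whence $T(r,R)=m(r,R)+N(r,R)=O(\log r)$ and $R$ is rational. In cases~$(iii)$–$(vi)$ this shortcut fails — the expression for $R$ carries the factor $f'^{-M}$ and $m(r,1/f')$ need not be $O(\log r)$ — so one argues through ramification: applying the Second Main Theorem to $a_1,\dots,a_q$ and combining~\eqref{a4} with $\overline N(r,1/(f-a_j))\le\frac1{m_j}N(r,1/(f-a_j))+O(\log r)$ forces the ramification term $N_1(r,f)$ to be $O(\log r)$, so $f$ has only finitely many critical points. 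By the local computation $R$ then has finitely many zeros and finitely many poles; being also of finite order, $R=Qe^{P}$ with $Q$ rational and $P$ a polynomial.

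It remains to prove that $P$ is constant; this is the heart of the matter. Integrating $f'^M=R\prod_j(f-a_j)^{(m_j-1)M/m_j}$ yields $\Psi(f(z))=G(z)+C$, where $\Psi$ is a primitive of $\prod_j(w-a_j)^{-(1-1/m_j)}$ and $G(z)=\int^{z}\sqrt[M]{R(\zeta)}\,d\zeta$. Because~\eqref{a4} holds, the orbifold $\bC(m_1,\dots,m_q)$ has vanishing Euler characteristic, so its universal cover is~$\C$ and the local inverse of~$\Psi$ extends to the developing map $E\colon\C\to\bC$, which is $\cos$ in case~$(ii)$ and an elliptic function in cases~$(iii)$–$(vi)$ — the relevant Euclidean orbifolds being those carried by the quadruple $(2,2,2,2)$ and the triples $(2,3,6)$, $(2,4,4)$, $(3,3,3)$. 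Thus $f=E\circ(G+C)$, which is~\eqref{a2a}, respectively~\eqref{a3a}. If $P$ were nonconstant, the maximum modulus of $G$ on $|z|=r$ would grow at least like $\exp(c\,r^{\deg P})$ with $\deg P\ge1$, and composing the transcendental (hence non-rational) function~$E$ with a function of such growth forces $\rho(f)=\infty$ by a standard growth estimate for composite meromorphic functions, contradicting $\rho(f)<\infty$. Hence $R$ is rational. Writing $R(z)\sim cz^{d}$ as $z\to\infty$ with $d\in\Z$, we get $G(z)\sim c'z^{(d+M)/M}$, and since for generic $b$ the set $E^{-1}(b)$ has $\asymp r^{\rho(E)}$ points in $\{|w|\le r\}$ while $G$ maps $\{|z|\le r\}$ onto a disk of radius $\asymp r^{(d+M)/M}$, one obtains $\rho(f)=\rho(E)\,(d+M)/M$, with $\rho(E)=1$ in case~$(ii)$ and $\rho(E)=2$ in cases~$(iii)$–$(vi)$. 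Substituting $M=2$ in cases~$(ii)$ and~$(iii)$, $M=6$ in case~$(iv)$, $M=4$ in case~$(v)$ and $M=3$ in case~$(vi)$ gives $2\rho(f)=d+2$, $\rho(f)=d+2$, $3\rho(f)=d+6$, $2\rho(f)=d+4$ and $3\rho(f)/2=d+3$ respectively — in every case the quantity on the left equals $d+M$. Since $f$ is transcendental one has $d\ge1-M$ (if $d\le-M$ then $G$ grows at most logarithmically and $f$ is forced to be rational or constant), so $d+M\ge1$: this gives all the asserted integrality and $\N_0$-membership statements. Finally, if all (not merely all but finitely many) $a_j$-points have multiplicity at least~$m_j$, then by the local computation $R$ has no finite pole, hence is a polynomial, so $d\ge0$ and $\rho(f)=\rho(E)(d+M)/M\ge\rho(E)$; thus $\rho(f)\ge1$ in cases~$(i)$ and~$(ii)$, and $\rho(f)\ge2$ in cases~$(iii)$–$(vi)$.

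The main obstacle is the rationality of~$R$ in cases~$(iii)$–$(vi)$, and within it the exclusion of a nonconstant~$P$: this cannot be read off from proximity functions and genuinely uses the global representation $f=E\circ G$ together with a growth estimate for compositions. Establishing that representation rigorously is the other delicate point — one must invoke the uniformization of the four Euclidean orbifolds listed above and check that the monodromy of~$G$ around the finitely many zeros and poles of~$R$ is absorbed by the period group of~$E$, so that $E\circ G$ is single-valued.
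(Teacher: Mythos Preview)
Your outline follows Selberg's argument, which is exactly what the paper sketches; case~$(i)$, case~$(ii)$, the representation $f=E\circ G$, and the final order computation are all in order.

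The gap is in cases $(iii)$--$(vi)$, where you assert that the Second Main Theorem together with~\eqref{a4} gives $N_1(r,f)=O(\log r)$ and hence that $f$ has only finitely many critical points. This is false: every $a_j$-point of multiplicity at least $m_j\geq 2$ is a critical point, there are infinitely many of them, and in fact $N_1(r,f)\sim 2T(r,f)$ here. What~\eqref{a4} actually buys, when the SMT is applied to $a_1,\dots,a_q,\infty$, is that all inequalities become equalities up to $O(\log r)$: one obtains $m(r,\infty,f)=O(\log r)$, $m(r,a_j,f)=O(\log r)$ for each~$j$, and the \emph{excess} ramification --- multiple poles, $a_j$-points of multiplicity strictly greater than~$m_j$, and critical points with critical value outside $\{a_1,\dots,a_q,\infty\}$ --- has counting function $O(\log r)$. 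Your own local computation shows that the zeros of $R$ lie exactly at these excess points, so the conclusion you want (finitely many zeros of~$R$) is true; but ``$f$ has finitely many critical points'' cannot be the bridge. Once this tightness is in hand there is a shorter finish than your $R=Qe^P$ detour: from $m(r,\infty,f)=O(\log r)$ and $N(r,\infty,f)-\overline N(r,\infty,f)=O(\log r)$ one gets $T(r,f')=2T(r,f)+O(\log r)$ and $N(r,1/f')=2T(r,f)+O(\log r)$, hence $m(r,1/f')=O(\log r)$; writing $R=\prod_j\bigl(f'/(f-a_j)\bigr)^{(m_j-1)M/m_j}\cdot (f')^{-M}$ then yields $m(r,R)=O(\log r)$, so $R$ is rational directly. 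A smaller slip: the bound $d\geq 1-M$ is right in case~$(ii)$ (where $d=-M$ forces $f$ rational), but in cases $(iii)$--$(vi)$ the value $d=-M$ produces the transcendental order-zero examples with $T(r,f)\asymp(\log r)^2$ that the paper mentions after Theorem~\ref{selberg}; there one only has $d\geq -M$, which is exactly what the $\N_0$-conclusion allows.
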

The question that motivated this paper is whether there exists a Theorem~\ref{selberg}$'$ which 
corresponds to Theorem~\ref{selberg} in the same way that 
Theorems~\ref{bloch2}, \ref{bloch2a}, \ref{bloch2b} and  \ref{bloch2c}
correspond to
Theorems~\ref{bloch1}, \ref{bloch1a}, \ref{bloch1b} and  \ref{bloch1c}.
In other words, we ask
to what extent the conclusion of Theorem~\ref{selberg}
remains valid if instead of the hypotheses of Theorem~\ref{bloch1c} we assume the 
hypotheses of Theorem~\ref{bloch2c}.
Note that Theorem~\ref{bloch2b}, which was envisaged by Bloch using his principle of 
topological continuity, says that this holds in case $(ii)$ if $f$ is entire.
In contrast, we will see that the corresponding result does not hold in 
cases $(i)$ and~$(iii)$.

The following result corresponds to case $(i)$ of Theorem~\ref{bloch1c}.

\begin{theorem}\label{thm1}
Let $f$ be a transcendental entire function and let $D$ be a Jordan domain in~$\C$.
If $f$ has only finitely many islands over $D$, then $\rho(f)\geq 1/2$.

Conversely, for every $\rho\in[1/2,\infty)$ and every Jordan domain $D$
there exists an entire function $f$ of order $\rho$
such that $f$ has no island over $D$.
\end{theorem}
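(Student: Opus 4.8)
Assume for contradiction that $f$ is transcendental entire, has only finitely many islands over $D$, and $\rho(f)<1/2$. By the $\cos\pi\rho$ theorem (Wiman's theorem) there are radii $r_n\to\infty$ with $\min_{|z|=r_n}|f(z)|\to\infty$, so for large $n$ the image $f(\{|z|=r_n\})$ is disjoint from $\overline D$; hence $f^{-1}(D)$ meets no circle $\{|z|=r_n\}$, and consequently no component of $f^{-1}(D)$ is unbounded. On the other hand, since $f$ is transcendental $f(\{|z|>R\})$ is dense for every $R$, so $f^{-1}(D)$ is nonempty and unbounded and therefore has infinitely many components, all bounded. If a bounded component $U$ of $f^{-1}(D)$ contains no critical point of $f$, then $f|_U\colon U\to D$ is a proper unbranched covering of the simply connected domain $D$, hence a conformal isomorphism, so $U$ is an island; since there are only finitely many islands over $D$, all but finitely many of these (infinitely many, bounded) components of $f^{-1}(D)$ contain a critical point of $f$, and they are moreover multiply connected because a simply connected bounded component is an island. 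Picking one critical point in each yields infinitely many critical points $\zeta_j$ with $|\zeta_j|\to\infty$ and critical values $f(\zeta_j)\in D$.

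\textbf{The crux of Part 1.} It remains to rule this out, and this is the main obstacle: one must show that a transcendental entire function of order $<1/2$ cannot have critical points of arbitrarily large modulus whose critical values all lie in a fixed bounded set. I would work with the \emph{filled} regions $\widehat U_j$ obtained from the multiply connected components $U_j$ by adjoining their holes. Each $\widehat U_j$ is a bounded simply connected domain with $\partial\widehat U_j\subset f^{-1}(\partial D)$; an argument-principle computation shows that $f$ maps $\widehat U_j$ properly onto $D$, so $|f(z)|<M_D:=\sup_{w\in D}|w|$ for $z\in\widehat U_j$, while $|f|\to\infty$ on the two circles $\{|z|=r_n\}$, $\{|z|=r_{n+1}\}$ bounding the annulus containing $\widehat U_j$ (and $r_{n+1}/r_n$ stays bounded because the exceptional set in Wiman's theorem has small logarithmic density). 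A two-constants / harmonic-measure estimate for $\log|f|$ on these annuli punctured by the ``wells'' $\widehat U_j$ — or, equivalently, an appeal to Ahlfors's second main theorem for covering surfaces, or to the Denjoy--Carleman--Ahlfors circle of ideas about accumulation of critical values — should then force $\log M(r,f)\gtrsim r^{1/2}$, contradicting $\rho(f)<1/2$.

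\textbf{Part 2: sharpness.} It suffices to construct, for every $\rho\in[1/2,\infty)$, an entire function $g$ of order $\rho$ with $|g(0)|<1$ for which $\{z:|g(z)|<1\}$ is connected and unbounded. Granting this, let $D$ be a Jordan domain, pick $c\in D$ and $\varepsilon>0$ with $\overline{\D(0,\varepsilon)}\subset D-c$, and set $f:=\varepsilon g+c$, an entire function of order $\rho$. If $U$ were an island of $f$ over $D$, then $f|_U$ is proper onto $D$, hence onto; since $0\in D-c$, the map $g|_U$ is onto $(D-c)/\varepsilon\supseteq\D$, so $g$ has a zero $z_0\in U$. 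But $z_0\in\{|g|<1\}\subseteq g^{-1}\bigl((D-c)/\varepsilon\bigr)=f^{-1}(D)$, and $\{|g|<1\}$ is connected and unbounded, so the component of $f^{-1}(D)$ through $z_0$ — namely $U$ — would be unbounded, a contradiction.

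\textbf{The constructions.} For $\rho=1$ take $g(z)=\tfrac12 e^{z}$, so that $\{|g|<1\}=\{\re z<\log2\}$. For $\rho\in[1/2,1)$ take $g(z)=\lambda\prod_{n\ge1}\bigl(1-z/n^{1/\rho}\bigr)$ (which equals $\lambda\,\dfrac{\sin\pi\sqrt z}{\pi\sqrt z}$ when $\rho=1/2$), an entire function of order exactly $\rho$ with all zeros on the positive real axis, and choose $0<\lambda<1/\sup_{x\ge0}\bigl|\prod_{n\ge1}(1-x/n^{1/\rho})\bigr|$, so $|g(0)|<1$ and $\{|g|<1\}$ contains the whole positive real axis. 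The standard indicator estimates — namely $\log\bigl|\prod_n(1-x/n^{1/\rho})\bigr|\le\bigl(\pi\cot(\pi\rho)+o(1)\bigr)x^{\rho}$ on $[0,\infty)$ and $\log|g(re^{i\theta})|\sim\frac{\pi}{\sin\pi\rho}\cos\!\bigl(\rho(\pi-|\theta|)\bigr)r^{\rho}$ off the positive axis — show that $\{|g|<1\}$, far from the origin, is confined to a neighborhood of the positive real axis lying in the sector $|\arg z|\le\pi-\pi/(2\rho)$; combined with a minimum-principle argument (every bounded component of $\{|g|<1\}$ must contain a zero of $g$, hence meet the positive axis, which is connected) this shows $\{|g|<1\}$ is connected and unbounded. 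Finally, for arbitrary $\rho\ge1/2$ write $\rho=n\rho_0$ with $n=\lceil\rho\rceil\in\N$ and $\rho_0=\rho/n\in[1/2,1]$, take $g_0$ as above for $\rho_0$, and set $g(z)=g_0(z^{n})$: then $g$ has order $\rho$, $g(0)=g_0(0)$ so $|g(0)|<1$, and $\{|g|<1\}$ is the preimage under $z\mapsto z^{n}$ of the connected unbounded set $\{|g_0|<1\}\ni0$, hence again connected and unbounded. The only nontrivial points are the connectedness assertions for $\{|g_0|<1\}$, which rest on the indicator estimates and the minimum principle for $\log|g_0|$.
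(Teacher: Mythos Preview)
Your Part~1 contains a false step that sends you down an unnecessary detour. You correctly note that a bounded simply connected component of $f^{-1}(D)$ is, by definition, an island, and from this you conclude that all but finitely many of the bounded components must be multiply connected. But for entire $f$ and a Jordan domain $D\subset\C$, \emph{every} bounded component of $f^{-1}(D)$ is simply connected. Indeed, if $U$ were a bounded component with filled hull $\widehat U\supsetneq U$, then $\partial\widehat U\subset\partial U\subset f^{-1}(\partial D)$, and the maximum principle forces $f(\widehat U)\subset\overline D$: any component of $f^{-1}(\C\setminus\overline D)$ meeting $\widehat U$ would lie inside $\widehat U$ (it cannot cross $\partial\widehat U$), hence be bounded, yet $|f|$ would exceed $\max_{\partial D}|w|$ there, which is impossible. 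Since $f(\widehat U)$ is open, $f(\widehat U)\subset D$, so $\widehat U\subset f^{-1}(D)$ and $U=\widehat U$ after all. With this observation your contradiction is already in hand: infinitely many bounded components, all of them islands, against the hypothesis of finitely many islands. Your ``crux'' paragraph is an attempt to rule out a configuration that cannot occur.

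The paper runs the same idea contrapositively in three lines: since bounded components of $f^{-1}(D)$ are automatically islands, the hypothesis forces an unbounded component of $f^{-1}(D)$; this bounds $\min_{|z|=r}|f(z)|$ for all large $r$, and Wiman's theorem gives $\rho(f)\ge 1/2$ directly, with no argument by contradiction.

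Your Part~2 is essentially the paper's construction: the same canonical product $g(z)=\prod_{n\ge1}(1-z/n^{1/\rho})$ for $\rho\in[1/2,1)$ (with the explicit form $\sin\sqrt{\pi z}/\sqrt{\pi z}$ at $\rho=1/2$), and the substitution $z\mapsto z^{p}$ to reach higher orders. The paper argues the ``no island'' conclusion a bit more economically: since $g(x)\to 0$ as $x\to+\infty$ along the positive real axis, one can choose $\delta>0$ so small that the entire ray $[0,\infty)$ lies in $\{|\delta g|<\varepsilon\}$; every zero of $g$ (hence every preimage of $0$ under $f=\delta g$) therefore lies in an unbounded component of $f^{-1}(D(0,\varepsilon))$, so there is no island over $D(0,\varepsilon)\subset D$. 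Your route via connectedness of $\{|g|<1\}$, indicator estimates and the minimum principle reaches the same conclusion and is correct, just more elaborate than needed.
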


Our next result corresponds to case $(ii)$ of Theorem~\ref{bloch1c}.
It has Theorem~\ref{bloch2b} as a corollary.
Here and in the following we will denote the multiplicity of an island $U$
over some Jordan domain by $\mult(U)$.

\begin{theorem}\label{thm2}
Let $f$ be a transcendental entire function of finite order
and let $D_1$ and $D_2$ be Jordan domains in~$\C$ with disjoint closures.
Suppose that $f$ has only finitely many simple islands over $D_1$ and~$D_2$.
Let $N$ be the number (counting multiplicity) of critical points $c$ of $f$ 
such that $f(c)\notin D_1\cup D_2$.
Put 
\begin{equation} \label{a5}
p= 2N+2+\sum_U (\mult(U)-2),
\end{equation}
where the sum is taken over all islands $U$ over $D_1$ or $D_2$.
Then $1\leq p<\infty$ and there exists $c>0$ such that 
\begin{equation} \label{a6}
\log M(r,f)\sim c\, r^{p/2}
\end{equation}
as $r\to\infty$.
In particular, $\rho(f)=p/2$.
\end{theorem}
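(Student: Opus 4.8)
The plan is to transpose Selberg's analysis of case~$(ii)$ (sketched before Theorem~\ref{selberg}), replacing the pair of values $\pm1$ by the pair of Jordan domains $D_1,D_2$. In that analysis the essential point is that $R=f'^2/(f^2-1)$ is forced to be rational; this yields the representation~\eqref{a2a}, from which $\rho(f)=1+\tfrac12\deg_\infty R$, and $\log M(r,f)\sim c\,r^{\rho(f)}$ by inspection of the growth of $\cos\bigl(\int\sqrt R\bigr)$. Moreover the correspondence ``$m$-fold $\pm1$-point $\mapsto$ zero of $R$ of order $m-2$; simple $\pm1$-point $\mapsto$ simple pole of $R$; critical point of multiplicity $k$ with critical value $\ne\pm1$ $\mapsto$ zero of $R$ of order $2k$'' identifies $\deg_\infty R$ with $p-2$, where $p$ is as in~\eqref{a5}, so that $\rho(f)=p/2$. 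For Theorem~\ref{thm2} the ramification arithmetic over a point must be replaced by the combinatorics of islands over a Jordan domain; this is where the ideas of Goldberg and Tairova are needed.

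\emph{Step 1 (structure).} Using Ahlfors' theory of covering surfaces --- the Scheibensatz (Theorem~\ref{bloch2c}) applied with $q=3$, $D_3$ a neighbourhood of $\infty$, $m_1=m_2=2$, $m_3=\infty$, together with Ahlfors' estimate bounding the total ramification of the covering by its mean sheet number --- and the hypothesis $\rho(f)<\infty$, I would show that $N<\infty$, that $f$ has only finitely many islands over $D_1\cup D_2$ of multiplicity $\ne2$ (so that $p<\infty$), and, crucially, that the combinatorial structure of $f\colon\C\to\bC$ over $D_1$, $D_2$ and their complement is of \emph{finite type}: outside a large disk it is built from boundedly many translates of finitely many periodic blocks, the finitely many simple islands being the only faces over $D_1\cup D_2$ of multiplicity one. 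Equivalently, the line complex of $f$ (suitably adapted to the doubly connected region $\bC\setminus(\overline{D_1}\cup\overline{D_2})$) differs from a periodic complex only over a bounded set.

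\emph{Step 2 (model and conclusion).} Given this finite-type structure, I would build an explicit entire model $g=\cos\bigl(\int\sqrt R\bigr)$, prescribing the rational function $R$ from the combinatorial data just as in the value case: a simple pole at a marked point of each simple island, a zero of order $\mult(U)-2$ at each island $U$ with $\mult(U)\ge3$, and a zero of order $2k$ at each critical point of multiplicity $k$ not lying over $D_1\cup D_2$, with the remaining free parameter fixed so that $\int\sqrt R$ has no logarithmic term at $\infty$; as in Selberg's argument one checks (with some care about single-valuedness) that $g$ is entire and realises the same finite-type structure. A direct count gives $\deg_\infty R=p-2$, hence $\rho(g)=p/2$ and $\log M(r,g)\sim c\,r^{p/2}$. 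By the rigidity of the combinatorial model, $f$ and $g$ have equivalent coverings agreeing off a bounded set, so $f=A\circ g\circ B$ with a quasiconformal homeomorphism $A$ of $\bC$ that is conformal near $\infty$ and a quasiconformal homeomorphism $B$ of $\C$ that is the identity outside a compact set. Since for large $r$ the maximum of $|g|$ on $\{|z|=r\}$ is attained where $|g|$ is large --- where $A$ has the form $w\mapsto aw+b+O(1/w)$ --- and $B$ maps $\{|z|=r\}$ to within bounded distance of a circle of radius comparable to $r$, the maximum principle gives $\log M(r,f)\sim c'\,r^{p/2}$ for some $c'>0$, which is~\eqref{a6}, and $\rho(f)=p/2$. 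Finally $g$ is transcendental (because $f$ is), and a transcendental entire function of the form $\cos\bigl(\int\sqrt R\bigr)$ has $\deg_\infty R\ge-1$: if $\deg_\infty R\le-2$ then $\int\sqrt R=O(\log r)$, so $\cos\bigl(\int\sqrt R\bigr)$ would be constant or a polynomial. Hence $p=\deg_\infty R+2\ge1$.

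\emph{Main obstacle.} The difficulty is Step~1. One cannot reduce Theorem~\ref{thm2} to the value-distribution result Theorem~\ref{selberg} by shrinking $D_1$ and $D_2$ to small disks, because a single island of multiplicity $\ge2$ over a Jordan domain of positive diameter may already contain infinitely many simple islands over sub-disks of it; thus ``finitely many simple islands over $D_1$ and $D_2$'' is genuinely weaker than ``finitely many simple $a_j$-points''. The proof must instead analyse the whole covering $f\colon\C\to\bC$ over $D_1$, $D_2$ and their complement, using finite order together with Ahlfors-type length--area estimates in the manner of Goldberg and Tairova to rule out any unbounded accumulation of ramification or of tracts, and thereby obtain the finite-type structure. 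Even establishing $p<\infty$ is part of this difficulty.
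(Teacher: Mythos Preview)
Your overall architecture---reduce to Selberg's representation $\cos\bigl(\int\sqrt R\,\bigr)$ via quasiconformal methods---matches the paper's, but the execution diverges at the decisive point. In Step~2 you build the model $g$ \emph{independently} from the combinatorial data and then assert that ``by the rigidity of the combinatorial model, $f$ and $g$ have equivalent coverings agreeing off a bounded set, so $f=A\circ g\circ B$ with \ldots\ $B$ the identity outside a compact set.'' This is the gap. Even when the line complexes of $f$ and $g$ coincide outside a bounded region, the holomorphic maps themselves do not; there are infinitely many islands marching to~$\infty$, and inside each one $f$ and $g$ differ. Topological equivalence of the coverings gives you a homeomorphism $B$, but nothing forces $B$ to be the identity (or even conformal) near~$\infty$. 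Without that, you cannot transfer the asymptotic $\log M(r,g)\sim c\,r^{p/2}$ to~$f$.

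The paper avoids this by doing the opposite: instead of building $g$ from scratch, it performs quasiregular \emph{surgery on $f$ itself}. On each face $V$ over $F_j$ it replaces $f$ by a quasiregular map with a single $a_j$-point of multiplicity $\mu(V)$ (via a Blaschke-product interpolation, Lemma~\ref{la7}), obtaining a global quasiregular $g$ that agrees with $f$ outside $f^{-1}(F_1\cup F_2)$. Straightening $g$ by the measurable Riemann mapping theorem yields an entire $h=g\circ\phi$ to which Selberg's argument applies directly (now $h$ genuinely has at most finitely many simple $a_j$-points). The crucial gain is control of the support of the dilatation of $\phi$: it lies in $\{|h|<r\}$, and because $h=\cos\bigl(\int\sqrt R\,\bigr)$ this set satisfies the Teichm\"uller--Wittich--Belinskii integrability condition, whence $\phi(z)\sim cz$. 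That is what replaces your unjustified ``$B=\mathrm{id}$ outside a compact set''.

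For Step~1 the paper also takes a different route than you suggest: it does not invoke the Scheibensatz or Ahlfors' length--area ramification bounds. Finiteness of $p$ comes instead from the Denjoy--Carleman--Ahlfors theorem (finite order $\Rightarrow$ finitely many tracts over~$\infty$), together with a purely combinatorial analysis: the dual graph $\Delta$ of the preimage $\Gamma^*=f^{-1}(\Gamma)$ is shown to be a tree, and $p$ equals its number of complementary components, which is then computed as $2+\sum_v(\mu(v)-2)$ by an elementary induction. The identity~\eqref{a5} follows by relating $\mu(V)$ to the island multiplicities inside $V$ via Riemann--Hurwitz, plus a Poisson-kernel argument showing the unbounded faces carry no critical points. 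Your proposed use of covering-surface estimates might eventually yield $p<\infty$, but the paper's argument is both more elementary and gives the exact formula for~$p$ directly.
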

\begin{remark0}
In the proof of Theorem~\ref{thm2} the hypothesis that $f$ is entire and
of finite order will be used only to conclude that $f^{-1}$ has finitely many
transcendental singularities. So actually we prove a more general theorem:

{\em Let $U$ be a simply connected domain in $\C$ and let 
$f\colon U\to\C$ be holomorphic.
Suppose that $f^{-1}$ has only finitely many transcendental singularities over $\bC$
and that $f$ has
only finitely many simple islands over two Jordan domains in $\C$ with disjoint closures.
Then $U=\C$, and $f$ is either a polynomial or an entire function satisfying~\eqref{a6}.
}
\end{remark0}

Under the hypothesis that there exist $a_1,a_2\in\C$ such that $f$ has 
only finitely many simple $a_j$-points, the conclusions of Theorem~\ref{thm2}
and the above remark were obtained by 
Goldberg and Tairova~\cite{Goldberg1963}.
Their proof was based on topological arguments and it probably can be 
extended to a proof of~Theorem~\ref{thm2}.

Note that we have $\mult(U)\geq 1$ and hence $\mult(U)-2\geq -1$
for all islands~$U$, with $\mult(U)-2=-1$
only for the at most finitely many simple islands.
Thus $p<\infty$ implies in particular that $N<\infty$ and that $\mult(U)>2$ for at 
most finitely many islands~$U$.

Note also that if all islands over $D_1$ and $D_2$ are multiple, then $\mult(U)\geq 2$ for 
all islands $U$ and thus $p\geq 2$.
Thus Theorem~\ref{bloch2b} follows from Theorem~\ref{thm2}.

Theorems~\ref{thm1} and~\ref{thm2} concern entire functions. The analogous
results do not
hold if instead of entire functions we consider meromorphic functions which have no island 
over a domain containing~$\infty$.
\begin{theorem}\label{thm3}
Let $D_1$ and $D_2$ be Jordan domains in~$\bC$ with disjoint closures.
Then, given $\rho\in[0,\infty)$, there exists a meromorphic function $f$ of order $\rho$ 
which has no island over $D_1$ and $D_2$.
\end{theorem}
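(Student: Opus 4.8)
The plan is to build, for each $\rho\in[0,\infty)$, a meromorphic function whose Riemann surface (in the sense of Ahlfors) has a very simple combinatorial structure — essentially a ``logarithmic'' covering away from finitely many critical points — arranged so that \emph{every} component of $f^{-1}(D_j)$ is unbounded for $j=1,2$. The natural model is to take a function of the form $f=g\circ h$, or more concretely to prescribe $f$ via its inverse using the line complex (Speiser graph): pick finitely many logarithmic branch points over a set of asymptotic values that \emph{separates} $\overline{D_1}$ from $\overline{D_2}$, plus $\infty$ among the asymptotic values so that $f$ is genuinely meromorphic (not entire). Since $\overline{D_1}\cap\overline{D_2}=\emptyset$, one can choose three points $b_1,b_2,b_3\in\bC$ — one in each ``gap'' of the configuration, with $b_3=\infty$ say — so that any curve in $\bC$ from $D_1$ to $D_2$ must cross a cut joining the $b_i$'s; making these the logarithmic singularities forces every island over $D_1$ or $D_2$ to be unbounded, hence there are no islands at all.

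The first step is the construction when $\rho$ is such that a suitable class of Lindelöf-type or Speiser functions of that exact order is available: for $\rho\in(0,\infty)$ one can use the classical fact that for every positive $\rho$ there is an entire function of order $\rho$ with finitely many ``direct'' or logarithmic singularities and a controlled line complex (e.g. of the form arising from $\int \sqrt[n]{P}$ as in \eqref{a3a}, or a Lindelöf function $\prod(1-z/a_k)$ with $|a_k|$ growing like $k^{1/\rho}$); composing with a Möbius transformation and, if necessary, adding a pole by passing to $1/(f-a)$ or to a ratio, one obtains a meromorphic function of the same order whose only asymptotic values are the chosen $b_1,b_2,b_3$. The case $\rho=0$ is handled separately, for instance by an infinite product of genus zero with very sparse zeros (so that $\log M(r,f)=o(r^\varepsilon)$ for all $\varepsilon$), again post-composed with a Möbius map; here one must be a little careful that the order is exactly $0$ and that the function is transcendental.

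The second step is the topological verification that with this choice of singular values there is no island over $D_1$ or $D_2$. This is where the real content lies. The argument: suppose $U$ were a bounded, simply connected component of $f^{-1}(D_1)$. Then $f|_U\colon U\to D_1$ is proper, so $f$ omits the values $b_1,b_2,b_3$ on a neighborhood structure that confines $U$; but the complement $\bC\setminus\{b_1,b_2,b_3\}$ retracts onto curves that force the preimage components of any simply connected domain avoiding enough of the $b_i$ to be unbounded whenever the global covering has the prescribed singularities over those points — concretely, one shows that a cross-cut of $\bC$ through the $b_i$ lifts to cross-cuts of $\C$ partitioning the plane, and $\overline U$ would have to avoid all of them, contradicting boundedness. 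I would phrase this cleanly using the standard description of the tracts/islands in terms of the line complex: an island over $D$ exists iff $D$ can be enclosed by a curve whose lift bounds a compact region, and the chosen singular configuration rules this out for $D_1,D_2$.

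The main obstacle I anticipate is step one for general $\rho$, i.e.\ exhibiting explicitly, for \emph{every} $\rho\in[0,\infty)$, a meromorphic function of order \emph{exactly} $\rho$ with only the three prescribed asymptotic values and no other finite singular values — getting the order to land precisely on $\rho$ (rather than in a discrete set or only an interval) requires either a flexible family such as the Lindelöf functions $\prod_k(1-z/(c k^{1/\rho}))$ with a carefully tuned density, or a quasiconformal surgery argument in the spirit of the deformation methods alluded to in the keywords: start from a model covering surface with the right singular set and conformal structure, apply the measurable Riemann mapping theorem, and compute the order from the dilatation. I would present the Lindelöf/infinite-product construction as the primary route, since it is elementary and gives the order by a direct maximum-modulus estimate, and remark that a quasiconformal construction is an alternative. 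Once the function is in hand, step two is purely topological and, I expect, short.
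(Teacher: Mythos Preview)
Your plan diverges sharply from the paper's argument, and the central topological step does not hold as stated.

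The paper's proof is short and explicit. After a M\"obius change putting $0\in D_1$ and $\infty\in D_2$, it takes $\varepsilon>0$ with $D(0,\varepsilon)\subset D_1$ and $\{|z|>1/\varepsilon\}\cup\{\infty\}\subset D_2$, sets $a=1+\varepsilon$, and writes down the infinite product
\[
f(z)=\prod_{k=1}^\infty\frac{1-z/a^k}{1+z/a^k}.
\]
An elementary estimate gives $|f(x)|<\varepsilon$ for every $x\ge a$, so the entire ray $[a,\infty)$ lies in one component of $f^{-1}(D(0,\varepsilon))$. Since any island over $D(0,\varepsilon)$ must contain a zero of $f$, and all zeros $a^k$ already sit on that unbounded component, there is no island over $D_1$; the pole side is symmetric and handles $D_2$. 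This $f$ has order $0$ with $T(r,f)\sim c(\log r)^2$. For general $\rho$, the paper composes with an entire $g$ of order $\rho/2$ and reads off $\rho(f\circ g)=\rho$ from Lemma~\ref{la8}. No line complexes, no surgery, no analysis of singular values.

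Your argument, by contrast, rests on the claim that prescribing logarithmic singularities over three points $b_1,b_2,b_3$ straddling a cut between $D_1$ and $D_2$ forces every component of $f^{-1}(D_j)$ to be unbounded. That is false at the level of generality you state it: the singular set alone does not determine whether bounded preimage components exist; the full combinatorics of the line complex does. Already $\tan z$, which has only the two logarithmic singular values $\pm i$ and no critical values, has simple islands over every small disk about a regular value. Passing to three logarithmic singularities does not automatically help --- you would need to specify a line complex with no bounded faces over the relevant regions, then prove the resulting surface is parabolic, and then compute the order of the uniformizing function. Your step one does not do this: Lindel\"of products are entire, a M\"obius post-composition or passage to $1/(f-a)$ scrambles the singular set in ways you do not track, and you give no mechanism for hitting an \emph{arbitrary} $\rho\in[0,\infty)$ with a fixed singular configuration. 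Your step two (``a cross-cut through the $b_i$ lifts to cross-cuts partitioning $\C$, and $\overline U$ would have to avoid all of them'') is exactly the statement that needs proof, and it is not a consequence of the hypotheses you have imposed.

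The moral is that the paper bypasses the covering-surface viewpoint entirely: rather than engineering the inverse-function singularities, it places the zeros and poles on two rays so densely that each ray swallows every candidate island on its side, and then gets arbitrary order by composition.
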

\begin{theorem}\label{thm4}
Let $D$ be a  Jordan domain in~$\bC$ and let
$a_1,a_2\in \overline{\C}\setminus D$ be distinct.
Then, given $\rho\in[0,\infty)$, there exists a meromorphic function $f$ of order $\rho$ 
which has no island over $D$ and for which all $a_1$-points and $a_2$-points are multiple.
\end{theorem}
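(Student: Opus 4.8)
The plan is to deduce Theorem~\ref{thm4} from Theorem~\ref{thm3} by composing a function supplied by Theorem~\ref{thm3} with a fixed rational function of degree~$2$. First I would normalize: choose a Möbius transformation $M$ of $\bC$ with $M(1)=a_1$ and $M(-1)=a_2$, and set $D_0:=M^{-1}(D)$, a Jordan domain in $\bC$ with $1,-1\notin D_0$. Since $M$ is a conformal automorphism of $\bC$, it changes neither the order of a meromorphic function, nor which components of a preimage are islands over the relevant domains, nor the multiplicities of preimages; so it suffices to produce, for the given $\rho\in[0,\infty)$, a non-constant meromorphic $f_0$ of order~$\rho$ which has no island over $D_0$ and all of whose $\pm1$-points are multiple, and then put $f:=M\circ f_0$. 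For this I would assume, and flag as a restriction, that $a_1,a_2\notin\overline{D}$, hence $1,-1\notin\overline{D_0}$.

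The rational function is the Joukowski map $h(z)=\tfrac12(z+1/z)$, of degree~$2$, whose critical points are exactly $1$ and $-1$, with $h(\pm1)=\pm1$ and $h^{-1}(1)=\{1\}$, $h^{-1}(-1)=\{-1\}$. Since $\overline{D_0}$ contains neither critical value $\pm1$, the restriction $h\colon h^{-1}(D_0)\to D_0$ is an unbranched covering of degree~$2$ over the simply connected domain $D_0$, hence trivial: $h^{-1}(D_0)=V_1\cup V_2$ with $V_1,V_2$ disjoint, each mapped homeomorphically onto $D_0$ by $h$. To see that $\overline{V_1}\cap\overline{V_2}=\emptyset$, note that $\bC\setminus\overline{D_0}$ is a Jordan domain containing both branch values, so that $h^{-1}(\bC\setminus\overline{D_0})$ is a branched double cover of a disk branched over two simple points, hence a connected surface of Euler characteristic~$0$, i.e.\ a topological annulus; its two boundary Jordan curves are $\partial V_1$ and $\partial V_2$, so $V_1,V_2$ are Jordan domains with disjoint closures. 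Now apply Theorem~\ref{thm3} to $V_1$ and $V_2$: for the given $\rho$ it yields a non-constant meromorphic $g$ of order~$\rho$ with no island over $V_1$ and none over $V_2$. Put $f_0:=h\circ g$. Then $f_0^{-1}(D_0)=g^{-1}(V_1)\cup g^{-1}(V_2)$ is a disjoint union, so any island of $f_0$ over $D_0$ would be a bounded simply connected component of $g^{-1}(V_1)$ or of $g^{-1}(V_2)$, i.e.\ an island of $g$ over $V_1$ or $V_2$ -- and there are none; hence $f_0$ has no island over $D_0$. If $f_0(z_0)=\pm1$ then $g(z_0)\in h^{-1}(\{1,-1\})=\{1,-1\}$, the two critical points of $h$, so the multiplicity of $z_0$ as a $\pm1$-point of $f_0$ is the product of the local degrees of $g$ at $z_0$ and of $h$ at $g(z_0)$, hence at least~$2$; so all $\pm1$-points of $f_0$ are multiple. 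Finally $\rho(f_0)=\rho(g)=\rho$, since post-composition with a non-constant rational function preserves the order of a meromorphic function (here $T(r,f_0)=2\,T(r,g)+O(1)$).

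The step I expect to be the main obstacle is the topological bookkeeping above -- a fully rigorous proof that $h^{-1}(D_0)$ is the disjoint union of two Jordan domains with disjoint closures -- together with the borderline case $a_j\in\partial D$, which the normalization does not cover. In that case the critical value $\pm1$ of $h$ lies on $\partial D_0$, the set $h^{-1}(\partial D_0)$ has a transversal self-crossing at the corresponding critical point, and one checks that the two components of $h^{-1}(D_0)$ then share that point in their closures, so Theorem~\ref{thm3} cannot be invoked verbatim; this case would have to be treated by a separate variant of the construction underlying Theorem~\ref{thm3}, or else the hypothesis of Theorem~\ref{thm4} read with $\overline{\C}\setminus\overline{D}$ in place of $\overline{\C}\setminus D$. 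Everything else -- invariance under $M$, the transfer of the island and ramification conditions through $h$, and the order computation -- is routine.
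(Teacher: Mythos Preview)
Your approach is correct and genuinely different from the paper's. The paper proves Theorem~\ref{thm4} by an explicit quasiconformal surgery: after normalizing to $a_{1,2}=\pm 2i$ and $\infty\in D$, it glues a quasiregular modification of $2\sinh(\arcsin z)$ (for which $\pm 2i$ are totally ramified) to the infinite product from the proof of Theorem~\ref{thm3} along the boundary of a sector, straightens the resulting quasiregular map via Lemma~\ref{lemma-qr}, and then composes with an entire function of suitable order. Your argument instead reduces Theorem~\ref{thm4} directly to the statement of Theorem~\ref{thm3} by post-composing with the Joukowski map; this is shorter and more conceptual, avoids any quasiconformal machinery beyond what is already inside Theorem~\ref{thm3}, and handles every $\rho\in[0,\infty)$ in one stroke. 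The paper's hands-on construction, on the other hand, gives explicit control of the example (in particular the estimate~\eqref{a6b}).

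The gap you flag for $a_j\in\partial D$ is easily closed, and in fact by an observation the paper itself uses implicitly (in the sentence ``Choosing $\varepsilon$ sufficiently small we conclude that $f$ has no island over~$D$''): if $\overline{D'}\subset D$ are Jordan domains and $f$ has no island over $D'$, then $f$ has no island over $D$. Indeed, if $U$ is an island of $f$ over $D$, then $f\colon U\to D$ is proper of some degree $d$ with $d-1$ critical points; the components $U_1',\dots,U_k'$ of $f^{-1}(D')\cap U$ are proper over $D'$ with degrees summing to $d$, so by Riemann--Hurwitz $\sum_i\chi(U_i')\ge d-(d-1)=1$ and some $U_i'$ is simply connected, hence an island over $D'$. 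Thus in general you may first replace $D$ by any Jordan domain $D'$ with $\overline{D'}\subset D$; then automatically $a_1,a_2\notin\overline{D'}$ (since $a_j\notin D$), your Joukowski argument applies verbatim to $D'$, and the conclusion for $D$ follows. No separate construction is needed.
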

The functions $f$ constructed in the proofs of Theorems~\ref{thm3} and~\ref{thm4}
have the property that
\begin{equation} \label{a6b}
T(r,f)\geq c\,(\log r)^2
\end{equation}
for some $c>0$ and all large~$r$.
\begin{question}\label{qu1}
Do we have~\eqref{a6b} for every meromorphic function $f$ satisfying the 
conditions of Theorems~\ref{thm3} or~\ref{thm4}?
\end{question}

The following result says that in case $(iii)$
there is no analogue of Theorem~\ref{selberg} if instead of multiple $a_j$-points 
one considers multiple islands over certain Jordan domains.
In fact, it suffices to replace $a_j$-points by multiple islands over some Jordan domain for one~$j$.
\begin{theorem}\label{thm5}
Let $a_1,a_2,a_3\in \overline{\C}$ be distinct and $\rho\in(0,\infty)$.
Then there exists a meromorphic function $f$ of order $\rho$ and a Jordan
domain $D$ whose closure is contained in $\bC\setminus\{a_1,a_2,a_3\}$
such that all $a_j$-points of $f$ are multiple for $j\in\{1,2,3\}$
and such that $f$ has no simple island over $D$.
\end{theorem}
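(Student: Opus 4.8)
The plan is to first strip off the three totally ramified values by a fixed rational pre-factor, and then, for each $\rho$, to build a meromorphic function of order $\rho$ with the right island structure by re-spacing the fundamental cells of a Weierstrass function; the last step is a quasiconformal-surgery / type-problem argument, and that is where the work lies.

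\textbf{Step 1 (reduction).} Using the Riemann--Hurwitz formula I would first fix a rational map $R$ of degree $4$ that is totally ramified over $a_1,a_2,a_3$ --- each fibre consisting of two points of local degree $2$ --- and unramified over $\bC\setminus\{a_1,a_2,a_3\}$; explicitly one can take $R=M\circ R_1$ with $R_1(z)=z^2+z^{-2}$, whose critical values are $2,-2,\infty$, each of type $[2,2]$, and $M$ a Möbius transformation sending $2,-2,\infty$ to $a_1,a_2,a_3$. For any non-constant meromorphic $g$ the composition $f:=R\circ g$ then automatically has all $a_j$-points of multiplicity $\ge2$ for $j\in\{1,2,3\}$, and $\rho(f)=\rho(g)$ by Valiron's theorem on composition with rational functions. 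Choosing a small disk $D$ about a point of $\bC\setminus\{a_1,a_2,a_3\}$, the set $R^{-1}(D)$ is a union of four pairwise disjoint Jordan domains $D^{(1)},\dots,D^{(4)}$ each mapped conformally onto $D$, and $f$ has a simple island over $D$ precisely when $g$ has one over some $D^{(i)}$. So it suffices to produce, for every $\rho\in(0,\infty)$, a meromorphic $g$ of order $\rho$ with no simple island over four given pairwise disjoint Jordan domains.

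\textbf{Step 2 (the model).} For $\rho=2$ I would take $g_0$ obtained from a Weierstrass function $\wp_\Lambda$ by choosing $\Lambda$ and post-composing with a Möbius transformation so that the four totally ramified values (of multiplicity $2$) land in $D^{(1)},\dots,D^{(4)}$; after the $D^{(i)}$ are shrunk, $g_0$ has no simple island over their union. The point to keep in view is the combinatorics in the domain: $\C$ is tiled by cells (fundamental parallelograms of $\Lambda$, slit along preimages of a fixed arc system on $\bC$), each carrying a degree-$2$ branched map onto $\bC$; for $g_0$ these cells have density $\asymp r^2$ inside $|z|\le r$, which forces $\rho(g_0)=2$, while the island multiplicities over the $D^{(i)}$ depend only on this cell pattern.

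\textbf{Step 3 (re-spacing) and the hard part.} To reach order $\rho$ I would reassemble the \emph{same} cells with density $\asymp r^\rho$: tile $\C$ by concentric rings $A_k=\{R_k\le|z|<R_{k+1}\}$ with $R_k=k^{1/\rho}$, put a controlled number of cells in each ring (stretched to fit), and glue them along their boundaries via the identifications dictated by the fixed arc system on $\bC$; where the cell count changes from ring to ring I would insert transition cells carrying the extra branching over auxiliary values placed inside the $D^{(i)}$, so that every \emph{bounded} island over a $D^{(i)}$ still has multiplicity $\ge2$ (hence no simple island) while the now infinitely many auxiliary critical values remove the rigidity that Selberg's theorem would otherwise impose on the order. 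This produces a branched covering $\widetilde g\colon\C\to\bC$ with complex dilatation $\mu$ concentrated in the outer parts of the rings and the transition cells; one then solves $\bar\partial\Psi=\mu\,\partial\Psi$ for a homeomorphism $\Psi\colon\C\to\C$ and sets $g:=\widetilde g\circ\Psi^{-1}$, which is meromorphic, keeps the island structure of $\widetilde g$, and --- since $\Psi$ carries the $k$-th ring to Euclidean scale $\asymp k^{1/\rho}$ --- satisfies $T(r,g)\asymp r^\rho$, i.e.\ $\rho(g)=\rho$; then $f=R\circ g$ is the desired function. The crux is this last step: since $\rho\ne2$ forces the outer cells to be badly distorted, $\mu$ will not be bounded away from $1$, so this is necessarily a \emph{degenerate} Beltrami equation --- indeed it has to be, because any globally quasiconformal modification of $g_0$ merely rescales it and gives order $2$ again. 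One must arrange the cell sizes, the slits and the number of cells per ring so that, at the same time, (i) the reassembled surface is of parabolic type, i.e.\ the uniformizing map lands on $\C$ rather than the disk --- a type problem governed by extremal-length (flux) estimates over the nested rings --- and (ii) $|\mu|$ decays fast enough along $\infty$ that $\Psi$ is asymptotically conformal there and the order comes out exactly $\rho$ and not merely in some interval $[\rho/K,\rho K]$. Balancing (i) against (ii) is where the real effort goes.
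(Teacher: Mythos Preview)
Your Step~1 reduction via the degree-$4$ rational map $R=M\circ(z^2+z^{-2})$ is correct and elegant, and it is not what the paper does; the paper builds the three totally ramified values directly rather than factoring them through a rational pre-composition. Step~2 is also fine.

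The gap is in Step~3, and it is twofold. First, you have not actually carried out the construction: you list the requirements (parabolic type of the reassembled surface, asymptotic conformality of $\Psi$, exact order $\rho$) but do not verify that your ring-based respacing can be arranged to satisfy them simultaneously, and with a genuinely degenerate Beltrami coefficient each of these is a serious issue. Second, and more important, your assertion that a degenerate Beltrami equation is \emph{necessary} is wrong. You argue that a globally quasiconformal modification of $\wp$ can only give order~$2$; but the paper obtains every positive order with \emph{bounded} dilatation, via a construction you did not consider.

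The paper (following K\"unzi) proceeds as follows. Take two elliptic functions $g_1,g_2$ with periods $2$ and $2i\tau$, sharing the critical values $\infty,1,-1$ but with distinct fourth critical values $a$ and $-a$. Restrict $g_j$ to the sector $S_j=\{|\operatorname{Im}z|\le(-1)^{j-1}\tau\operatorname{Re}z\}$ and modify it quasiregularly only in a string of triangles along $\partial S_j$ so that the two pieces match on the common boundary. Pulling back by $z\mapsto z^{2\alpha/\pi}$ with $\alpha=\arctan\tau$ gives a quasiregular map $f_0\colon\C\to\bC$; the (ordinary, bounded) Beltrami equation yields a meromorphic $F=f_0\circ\phi$, and one sets $f(z)=F(z^p)$. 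The dilatation is bounded and supported on a set satisfying the Teichm\"uller--Wittich--Belinskii condition, so $\phi(z)\sim cz$ and $\rho(f)=4p\alpha/\pi$; a suitable choice of $\tau$ and $p\ge 2$ hits every $\rho>0$. The domain $D\subset\D$ is chosen to contain both $a$ and $-a$; since $f_0$ maps the imaginary axis to $\partial\D$, every island over $D$ lies in a single half-plane and therefore contains a multiple $a$-point or $(-a)$-point, hence is not simple.

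The moral: the free parameter that controls the order is not the \emph{density} of fundamental cells (which, as you correctly note, forces unbounded dilatation) but the \emph{opening angle} of a sector together with an integer power map. This keeps the dilatation bounded and the order computation clean, and it sidesteps the type problem entirely.
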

\begin{question}\label{qu2}
Does the conclusion of Theorem~\ref{thm5} also hold for $\rho=0$?
\end{question}
Theorem~\ref{thm5} deals with case $(iii)$. We do not know whether there
are analogous results for the cases $(iv)$--$(vi)$.
\begin{question}\label{qu4}
In cases $(iv)$--$(vi)$,
is there an analogue of Theorem~\ref{selberg} if instead of multiple $a_j$-points 
we consider multiple islands over Jordan domains $D_j$ with disjoint closures?
\end{question}
The answer may depend on whether we replace multiple $a_j$-points by multiple islands
over the $D_j$ for all $j\in\{1,\dots,q\}$ or only for some~$j$.

There exist functions of order $0$ for which
all but finitely many islands over $D_j$ have multiplicity~$m_j$.
In fact, this is the case already in the situation of Theorem~\ref{selberg}.
Given $a_j$ and $m_j$ as there there exists a function of order $0$ for which
all but finitely many $a_j$-points have multiplicity at least $m_j$.
Such a function $f$ was considered already by Teichm\"uller~\cite[p.~734]{Teichmueller1944}.
It satisfies 
\begin{equation} \label{a6a}
T(r,f)\sim c\,(\log r)^2
\end{equation}
as $r\to \infty$, for some $c>0$.
This leads to the following question analogous to Question~\ref{qu1}.
\begin{question}\label{qu3}
Let $f\colon\C\to\bC$ be meromorphic and $q\in\N$.
Let $D_1,\dots,D_q$ be Jordan domains in $\bC$ with 
pairwise disjoint closures  and let $m_1,\dots,m_q\in\N\cup\{\infty\}$
satisfy~\eqref{a4}.
Suppose that, for all $j\in\{1,\dots,q\}$, all but finitely many 
islands over $D_j$ have multiplicity at least~$m_j$.
Does there exists $c>0$ such that~\eqref{a6b} holds for all large~$r$?
\end{question}

\section{Results used in the proofs}\label{results_used}
We begin with a classical result of Wiman \cite[Chapter~5, Theorem~1.3]{Goldberg2008}.
\begin{lemma}\label{la3}
Let $f$ be a non-constant entire function. Suppose that there exists $C>0$ such that
$\min_{|z|=r}|f(z)|<C$ for all $r>0$. Then $\rho(f)\geq 1/2$.
\end{lemma}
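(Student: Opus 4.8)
The plan is to argue by contradiction, the essential ingredient being the classical $\cos\pi\rho$ theorem of Wiman. Write $M(r,f)=\max_{|z|=r}|f(z)|$ and $m(r,f)=\min_{|z|=r}|f(z)|$. Since $f$ is non-constant, $M(r,f)$ is strictly increasing and unbounded, so $\log M(r,f)\to\infty$ as $r\to\infty$. I would suppose, contrary to the assertion, that $\rho:=\rho(f)<1/2$; then $\cos\pi\rho>0$, and I may fix $\varepsilon>0$ with $\cos\pi\rho-\varepsilon>0$. Since $\rho<1$, the $\cos\pi\rho$ theorem (\cite[Chapter~5, Theorem~1.3]{Goldberg2008}) applies and yields an unbounded set of radii $r$ along which
\begin{equation}
\log m(r,f)\geq (\cos\pi\rho-\varepsilon)\log M(r,f).
\end{equation}
Letting $r\to\infty$ through this set, the right-hand side tends to $+\infty$, hence $m(r,f)\to\infty$, which contradicts the hypothesis that $m(r,f)<C$ for all $r>0$. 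This gives $\rho(f)\geq 1/2$.

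The only substantial point is the $\cos\pi\rho$ theorem itself, for which in the write-up I would simply cite \cite{Goldberg2008}; let me nonetheless recall the mechanism. Since $\rho<1$, the Hadamard factorization of $f$ has genus~$0$, so after dividing by a power of $z$ I may assume $f(0)\neq 0$ and write $f(z)=c\prod_k(1-z/z_k)$ with $\sum_k 1/|z_k|<\infty$. For $|z|=r$ one has $\bigl|1-r/|z_k|\bigr|\leq|1-z/z_k|\leq 1+r/|z_k|$, so, setting $g(z):=c\prod_k(1+z/|z_k|)$ --- a canonical product all of whose zeros lie on the negative real axis ---
\begin{equation}
\log m(r,f)\geq \log|g(-r)|\qquad\text{and}\qquad \log M(r,f)\leq\log g(r).
\end{equation}
This reduces matters to comparing $\log|g(-r)|$ with $\log g(r)$. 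Expressing both through the zero-counting function $n(t)=\#\{k:|z_k|\leq t\}$, namely $\log g(r)=\int_0^\infty\log(1+r/t)\,dn(t)$ and $\log|g(-r)|=\int_0^\infty\log\bigl|1-r/t\bigr|\,dn(t)$, a Lindel\"of-type estimate from the theory of canonical products gives $\log|g(-r)|\geq(\cos\pi\rho-o(1))\log g(r)$ for all $r$ outside an exceptional set of finite logarithmic measure.

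The hard part is precisely the control of this exceptional set: on each circle $|z|=|z_k|$ through a zero one has $m(r,f)=0$ and $\log|g(-r)|=-\infty$, which is why the conclusion can only be asserted for a set of radii of positive lower density and not for every large~$r$. Since this is classical and worked out in \cite[Chapter~5]{Goldberg2008}, I would not reproduce it, and the contradiction in the first paragraph then completes the proof.
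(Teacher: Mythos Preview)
Your argument is correct and coincides with the paper's treatment: the paper does not prove this lemma but simply records it as Wiman's classical result, citing \cite[Chapter~5, Theorem~1.3]{Goldberg2008}, which is exactly the $\cos\pi\rho$ theorem you invoke. Your first paragraph spells out the immediate deduction of the lemma from that theorem, and the remaining paragraphs sketch the proof of the cited result itself; since the paper is content to quote the reference, your write-up could stop after the first paragraph.
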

The next result \cite[Chapter~5, Theorem~1.2]{Goldberg2008}
is a version of the Denjoy-Carleman-Ahlfors theorem.
\begin{lemma}\label{la1}
Let $f$ be an entire function of finite order  and let $C>0$.
Then $\{z\in\C\colon |f(z)|>C\}$ has at most $\max\{1,2\rho(f)\}$ 
connected components.
\end{lemma}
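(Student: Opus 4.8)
The plan is to combine the classical length--area (Carleman--Tsuji) estimate with an elementary Cauchy--Schwarz argument. Suppose $\{z\in\C\colon|f(z)|>C\}$ has $n\geq1$ connected components $D_1,\dots,D_n$. First, each $D_j$ is unbounded: were $D_j$ bounded we would have $|f|=C$ on $\partial D_j$, so the maximum principle would give $|f|\leq C$ on $D_j$, a contradiction. On $D_j$ the function $u_j:=\log(|f|/C)$ is positive and harmonic, since $f$ has no zeros there, and it extends continuously by $0$ to $\partial D_j$. Set $R_0:=1+\max_j\inf_{z\in D_j}|z|$. For $r\geq R_0$ the connected unbounded set $D_j$ meets the circle $\{|z|=r\}$; write $\theta_j(r)$ for the angular measure of $D_j\cap\{|z|=r\}$ and $B_j(r):=\sup\{u_j(z)\colon z\in D_j,\ |z|=r\}$. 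Since the $D_j$ are pairwise disjoint, $\sum_{j=1}^n\theta_j(r)\leq2\pi$ for every $r\geq R_0$.

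The analytic core is the Ahlfors distortion theorem: for each $j$ there is $c_j>0$ with
\[
B_j(r)\ \geq\ c_j\,\exp\!\left(\pi\int_{R_0}^{r}\frac{dt}{t\,\theta_j(t)}\right),\qquad r\geq R_0.
\]
I would prove this by the length--area method applied to $u_j$ on the part of $D_j$ between two circles: Cauchy--Schwarz along each circular cross-cut (where $u_j$ vanishes at the endpoints) bounds the corresponding piece of the Dirichlet integral of $u_j$ from below in terms of $B_j$ and $\theta_j$, while Green's formula bounds the Dirichlet integral from above by a boundary term, again in terms of $B_j$; eliminating the Dirichlet integral yields a differential inequality whose integration produces the displayed bound. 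The constant $\pi$ is sharp, the model being a sector $\{|\arg z|<\alpha/2\}$ carrying the harmonic function $\re(z^{\pi/\alpha})$, for which $\theta_j\equiv\alpha$ and $B_j(r)\asymp r^{\pi/\alpha}$. (Alternatively one invokes this estimate as a classical black box; see \cite[Chapter~5]{Goldberg2008}.) Since $u_j(z)\leq\log(M(r,f)/C)$ for $|z|=r$, we get $\log M(r,f)\geq\log C+c_j\exp\bigl(\pi I_j(r)\bigr)$ for each $j$, where $I_j(r):=\int_{R_0}^{r}dt/(t\,\theta_j(t))$.

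For the combinatorial conclusion, Cauchy--Schwarz gives $\bigl(\sum_{j=1}^n\theta_j(t)\bigr)\bigl(\sum_{j=1}^n1/\theta_j(t)\bigr)\geq n^2$, hence $\sum_{j=1}^n1/\theta_j(t)\geq n^2/(2\pi)$ for $t\geq R_0$, and integrating, $\sum_{j=1}^nI_j(r)\geq(n^2/2\pi)\log(r/R_0)$. Dividing by $\log(r/R_0)$ and using that the $\limsup$ of a sum is at most the sum of the $\limsup$s, we find $j_0$ with $\limsup_{r\to\infty}I_{j_0}(r)/\log(r/R_0)\geq n/(2\pi)$. Thus for every $\varepsilon>0$ there is a sequence $r_m\to\infty$ with $I_{j_0}(r_m)\geq(n/2\pi-\varepsilon)\log(r_m/R_0)$, whence $\log M(r_m,f)\geq\log C+c_{j_0}(r_m/R_0)^{\,n/2-\pi\varepsilon}$. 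Therefore $\rho(f)=\limsup_{r\to\infty}\log\log M(r,f)/\log r\geq n/2-\pi\varepsilon$, and letting $\varepsilon\to0$ gives $\rho(f)\geq n/2$, that is, $n\leq2\rho(f)$. As the case $n\leq1$ is trivial, this yields $n\leq\max\{1,2\rho(f)\}$ in all cases.

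The main obstacle is the Ahlfors distortion estimate with the sharp constant $\pi$; getting the constant exactly right is precisely what makes the length--area step delicate, and one must also cope with geometrically wild $D_j$---the function $\theta_j$ need not be continuous, the regions may pinch to width zero (which only helps the lower bound but forces one to interpret $I_j$ with care), and $u_j$ may fail to extend continuously to all of $\partial D_j$. These points are handled by the usual device of replacing $u_j$ by a suitable harmonic measure or regularization and by restricting to $r\geq R_0$, where each $D_j$ genuinely crosses $\{|z|=r\}$; after that the Cauchy--Schwarz step is routine. (Note that for $n\geq2$ the weaker bound $\rho(f)\geq1/2$ already follows from Wiman's Lemma~\ref{la3}, since two components force $\min_{|z|=r}|f(z)|\leq C$ for all large $r$; but this is too weak to give $n\leq2\rho(f)$.)
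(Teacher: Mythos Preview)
The paper does not give a proof of this lemma: it is stated as a quotation of \cite[Chapter~5, Theorem~1.2]{Goldberg2008} (the Denjoy--Carleman--Ahlfors theorem), with no argument supplied. Your sketch is precisely the classical proof one finds in that reference: the Carleman--Tsuji/Ahlfors distortion inequality applied to $u_j=\log(|f|/C)$ on each component, followed by the Cauchy--Schwarz averaging $\sum\theta_j\cdot\sum 1/\theta_j\geq n^2$ to force one $I_{j_0}$ to grow like $(n/2\pi)\log r$. So there is nothing to compare---your proposal \emph{is} the standard proof that the paper is citing.

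Two small remarks on presentation. First, your phrasing ``the $\limsup$ of a sum is at most the sum of the $\limsup$s'' is the right inequality but stated a bit obliquely; the cleanest way is: from $\sum_j I_j(r)/\log(r/R_0)\geq n^2/(2\pi)$ for all $r$, if every $\limsup_j$ were $<n/(2\pi)$ then (finitely many $j$) eventually $\sum_j<n^2/(2\pi)$, a contradiction. Second, you correctly flag that the sharp constant $\pi$ in the distortion bound is the whole point and that $\theta_j$ may vanish or be discontinuous; in the cited reference this is handled by working with the level sets of $u_j$ rather than the raw $D_j$, which regularizes the geometry. None of this affects the validity of your outline.
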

The following result \cite[Chapter~5, Theorem~1.4]{Goldberg2008} is a consequence of
Lemma~\ref{la1} and a result of Lindel\"of. It is also called  Denjoy-Carleman-Ahlfors theorem.
\begin{lemma}\label{la2}
Let $f$ be a non-constant entire function of finite order.
Then $f$ has at most $2\rho(f)$ finite asymptotic values.
\end{lemma}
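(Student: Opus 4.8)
The plan is to deduce the estimate from Lemma~\ref{la1} by turning distinct finite asymptotic values of $f$ into distinct components of a set $\{z:|f(z)|>C\}$. Let $n$ denote the number of distinct finite asymptotic values of $f$; we must show $n\le 2\rho(f)$. There is nothing to prove if $n=0$, so suppose $n\ge 1$ and fix such a value $a$ with asymptotic path $\gamma$. Since $\gamma$ meets every sufficiently large circle while $f-a$ is bounded along $\gamma$ and is bounded on the remaining circles by compactness, $\min_{|z|=r}|f(z)-a|$ is bounded above for all $r>0$; applying Lemma~\ref{la3} to the non-constant entire function $f-a$ and using $\rho(f-a)=\rho(f)$, we obtain $\rho(f)\ge 1/2$. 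Hence $\max\{1,2\rho(f)\}=2\rho(f)$, and the case $n=1$ is settled. Assume from now on that $n\ge 2$.

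I would first fix distinct finite asymptotic values $a_1,\dots,a_n$ together with asymptotic paths which, after the usual normalization, are pairwise disjoint simple arcs $L_1,\dots,L_n$ issuing from a common point, each tending to $\infty$ and arranged in cyclic order, with $f(z)\to a_j$ as $z\to\infty$ along $L_j$. These arcs cut $\C$ into $n$ simply connected domains $G_1,\dots,G_n$ with $\partial G_j\subseteq L_j\cup L_{j+1}$ (indices taken mod $n$); in $\bC$ each $G_j$ is a Jordan domain whose boundary contains $\infty$. Next choose $\delta>0$ so small that the closed disks $\overline{\Delta(a_j,\delta)}$ are pairwise disjoint, pick $r_0$ with $|f(z)-a_j|<\delta$ for $z\in L_j$, $|z|\ge r_0$, and set
\[
C:=1+\max\Bigl\{\,\max_{1\le j\le n}|a_j|+\delta\,,\ \max_{|z|\le r_0}|f(z)|\,\Bigr\}.
\]
Then $|f|<C$ on $\bigcup_j L_j$, so $\{z:|f(z)|>C\}$ avoids all the $L_j$ and each of its connected components lies in exactly one $G_j$.

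The crux is to show that every $G_j$ contains at least one such component. If some $G_j$ did not, then $\sup_{G_j}|f|\le C$, and since $|f|<C$ on $\partial G_j$ as well, $f$ would be bounded on $\overline{G_j}$. Mapping $G_j$ conformally onto the upper half-plane $\mathbb{H}$ so that the boundary point $\infty$ goes to $\infty$, and using Carath\'eodory's theorem (valid since $\partial G_j$ is a Jordan curve in $\bC$), $f$ is carried to a bounded holomorphic function $g$ on $\mathbb{H}$ which extends continuously to $\R$ and satisfies $g(x)\to a_j$ as $x\to+\infty$ and $g(x)\to a_{j+1}$ as $x\to-\infty$ (relabelling the two boundary rays if necessary). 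But Lindel\"of's theorem forces a bounded holomorphic function on $\mathbb{H}$ that has limits along the two ends of $\R$ to have equal limits there, so $a_j=a_{j+1}$ --- contradicting distinctness. Hence $\{z:|f(z)|>C\}$ has at least $n$ components, one inside each of the pairwise disjoint $G_j$, while Lemma~\ref{la1} bounds the number of components by $\max\{1,2\rho(f)\}$. Since $n\ge 2$, this yields $n\le 2\rho(f)$, as required.

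I expect the main obstacle to be the application of Lindel\"of's theorem, together with the normalization of the asymptotic paths on which it relies: one needs the $L_j$ to be disjoint simple curves tending to $\infty$ so that each $G_j$ is a genuine Jordan domain in $\bC$ with $\infty$ a single boundary point, which is exactly what guarantees that the conformal map identifies the accesses to $\infty$ along $L_j$ and along $L_{j+1}$ and transports the two asymptotic values $a_j,a_{j+1}$ to the two ends of $\R$. Checking that an arbitrary asymptotic path may be replaced by such an arc carrying the same asymptotic value is classical but should be done with care. Everything else --- the reduction to $n\ge 2$, the choice of $C$, the component count, and the appeals to Lemmas~\ref{la1} and~\ref{la3} --- is routine.
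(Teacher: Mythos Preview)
Your proposal is correct and follows precisely the route the paper indicates: the paper does not give a detailed proof but states that Lemma~\ref{la2} ``is a consequence of Lemma~\ref{la1} and a result of Lindel\"of,'' and your argument carries out exactly this deduction, with the added (and necessary) use of Lemma~\ref{la3} to dispose of the case $n=1$. Your identification of the delicate points---normalizing the asymptotic paths to disjoint simple arcs so that each $G_j$ is a Jordan domain in~$\bC$, and the Lindel\"of-type fact that a bounded holomorphic function in a half-plane cannot have distinct limits along the two ends of the real axis---is accurate, and both are standard.
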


We will use some results about quasiconformal and quasiregular mappings.
We refer to~\cite{Lehto1973} for the definition and basic properties,
noting that quasiregular mappings are called \emph{quasiconformal functions} there.
Let $D$ be a domain and $f\colon D\to\bC$ be quasiregular. We use the notation
\begin{equation}\label{c1}
\mu_f(z)=\frac{f_{\overline{z}}(z)}{f_z(z)},
\quad
K_f(z)=\frac{1+|\mu_f(z)|}{1-|\mu_f(z)|}
\quad\text{and}\quad
K(f)=\sup_{z\in D}|K_f(z)| .
\end{equation}
The H\"older continuity of quasiconformal mappings~\cite[\S~II.4.2]{Lehto1973}
yields the following result.
\begin{lemma}\label{la4}
Let $\phi\colon\C\to\C$ be quasiconformal. Then $|\phi(z)|=O(|z|^{K(\phi)})$ 
as $|z|\to\infty$.
\end{lemma}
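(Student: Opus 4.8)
The plan is to reduce the estimate to the local H\"older continuity of quasiconformal mappings by two standard manoeuvres: passing to the inverse map, so that a H\"older exponent $1/K(\phi)$ turns into the growth exponent $K(\phi)$; and conjugating by the inversion $\iota(z)=1/z$, so that behaviour near $\infty$ becomes behaviour near $0$, where the local estimate applies.

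First I would record the preliminaries. Since $\phi$ is a quasiconformal homeomorphism of $\C$ onto $\C$, it is proper and hence extends to a homeomorphism of $\bC$ with $\phi(\infty)=\infty$; as isolated points are removable for quasiconformality, this extension is again quasiconformal with the same maximal dilatation. In particular $|\phi(z)|\to\infty$ as $|z|\to\infty$, and $\phi^{-1}$ is quasiconformal with $K(\phi^{-1})=K(\phi)=:K$. Next I would observe that it suffices to prove a \emph{lower} bound for $\phi^{-1}$: if there are $c>0$ and $R>0$ with $|\phi^{-1}(w)|\ge c\,|w|^{1/K}$ whenever $|w|\ge R$, then substituting $w=\phi(z)$ and using $|\phi(z)|\to\infty$ gives $|z|\ge c\,|\phi(z)|^{1/K}$, hence $|\phi(z)|\le c^{-K}|z|^{K}$, for all sufficiently large $|z|$.

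To establish that lower bound I would set $\psi:=\phi^{-1}$, extended to $\bC$ with $\psi(\infty)=\infty$, and consider $g:=\iota\circ\psi\circ\iota$ with $g(0):=0$. Since $\iota$ is a M\"obius transformation, $g$ is a $K$-quasiconformal self-map of $\bC$ fixing $0$, so by the H\"older continuity of quasiconformal mappings \cite[\S~II.4.2]{Lehto1973} it is H\"older continuous with exponent $1/K$ near $0$: there are $C>0$ and $\delta>0$ with $|g(\zeta)|=|g(\zeta)-g(0)|\le C|\zeta|^{1/K}$ for $|\zeta|\le\delta$. Unwinding the conjugation, $|1/\psi(1/\zeta)|\le C|\zeta|^{1/K}$, that is $|\psi(w)|\ge C^{-1}|w|^{1/K}$ for $|w|\ge 1/\delta$, which is the required bound with $R=1/\delta$.

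The argument is short, and its substance lies entirely in the correct invocation of three classical facts from \cite{Lehto1973}: removability of an isolated point for quasiconformality (to extend across $\infty$), invariance of the maximal dilatation under pre- and post-composition with conformal maps (here the inversion), and — the decisive point — that a $K$-quasiconformal mapping is locally H\"older continuous with exponent exactly $1/K$, so that the exponent $1/K$ for $\phi^{-1}$ becomes the exponent $K$ for $\phi$. I do not anticipate a genuine obstacle beyond keeping the direction of the inequalities straight when passing to the inverse.
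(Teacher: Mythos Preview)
Your argument is correct and is precisely the standard way to unpack the citation the paper gives: the paper simply states that the lemma follows from the H\"older continuity of quasiconformal mappings \cite[\S~II.4.2]{Lehto1973}, and your proof supplies the details---conjugating by the inversion to move $\infty$ to $0$ and passing to the inverse so that the $1/K$ H\"older exponent becomes the growth exponent~$K$. There is nothing to add; this is the intended argument.
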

A basic result~\cite[\S~V.1]{Lehto1973} in the theory of quasiconformal mappings
says that there exist
quasiconformal mappings with prescribed dilatation. One consequence of this 
is the following result.
\begin{lemma}\label{lemma-qr}
Let $f\colon\C\to\bC$ be quasiregular. Then there exists a quasiconformal mapping
$\phi\colon\C\to\C$ such that $f\circ\phi$ is meromorphic.
\end{lemma}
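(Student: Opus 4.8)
The plan is to \emph{straighten} $f$ by precomposing with a quasiconformal map that carries exactly the complex dilatation of $f$, so that the composition is $1$-quasiregular and therefore meromorphic. First recall that a quasiregular map $f\colon\C\to\bC$ has a well-defined complex dilatation $\mu_f$: where $f$ is finite one sets $\mu_f=f_{\overline z}/f_z$ as in \eqref{c1}, and in a neighbourhood of a pole $z_0$ of $f$ one instead uses $1/f$, which is quasiregular there, and sets $\mu_f:=\mu_{1/f}$. These definitions agree on overlaps because the complex dilatation is unchanged under post-composition with a locally injective holomorphic map, in particular with $w\mapsto 1/w$; hence $\mu_f$ is a well-defined measurable function on $\C$ with $\|\mu_f\|_\infty\le (K(f)-1)/(K(f)+1)<1$. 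Applying the existence theorem for the Beltrami equation \cite[\S~V.1]{Lehto1973} to $\mu_f$ produces a quasiconformal homeomorphism $\psi\colon\C\to\C$ with $\mu_\psi=\mu_f$ almost everywhere. I then set $\phi:=\psi^{-1}$, again a quasiconformal homeomorphism of $\C$, and $g:=f\circ\phi$. Since $f$ is quasiregular and $\phi$ quasiconformal, $g\colon\C\to\bC$ is quasiregular, so it only remains to show that $g$ is $1$-quasiregular, i.e.\ that $\mu_g=0$ almost everywhere.

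To see this I write $f=g\circ\psi$ and use the chain rule for complex dilatations: for almost every $z$,
\[
\mu_f(z)=\frac{\mu_\psi(z)+\tau(z)\,\mu_g(\psi(z))}{1+\tau(z)\,\overline{\mu_\psi(z)}\,\mu_g(\psi(z))},
\qquad \tau(z)=\frac{\overline{\psi_z(z)}}{\psi_z(z)},\quad |\tau(z)|=1 .
\]
Since $\mu_\psi=\mu_f$, putting $b=\tau\cdot(\mu_g\circ\psi)$ this identity becomes $\mu_f=(\mu_f+b)/(1+\overline{\mu_f}\,b)$, which simplifies to $(1-|\mu_f|^2)\,b=0$. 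As $|\mu_f|<1$ almost everywhere, $b=0$, hence $\mu_g\circ\psi=0$, and $\psi$ being a homeomorphism, $\mu_g=0$ almost everywhere. A $1$-quasiregular map $\C\to\bC$ is holomorphic into $\bC$ — this is Weyl's lemma applied where $g$ is finite, and applied to $1/g$ near the poles of $g$ — so $g=f\circ\phi$ is meromorphic, which is the assertion.

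Granting the measurable Riemann mapping theorem, the only actual work is bookkeeping, and that is where I expect whatever difficulty there is to lie: one must check that $\mu_f$ is globally well defined in the presence of poles of $f$ (handled above via $1/f$), and one must justify the composition rule for complex dilatations, which comes out of linearizing $f=g\circ\psi$ at a point where $\psi$ is differentiable and expressing $f_z,f_{\overline z}$ through $g_w,g_{\overline w}$ and $\psi_z,\psi_{\overline z}$. If one prefers to avoid the explicit formula altogether, one can argue instead that $f$ and $\psi$ locally satisfy the same Beltrami equation, so by the uniqueness clause of \cite[\S~V.1]{Lehto1973} they differ locally by post-composition with a holomorphic map (meromorphic near a pole of $f$); these local maps are restrictions of $g=f\circ\psi^{-1}$ and patch to one meromorphic function on $\C$.
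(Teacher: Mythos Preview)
Your argument is correct and is exactly the standard deduction the paper has in mind: the paper does not give a proof but simply states that the lemma is a consequence of the existence of quasiconformal mappings with prescribed dilatation \cite[\S~V.1]{Lehto1973}, and your proof spells out precisely that consequence via the chain rule for complex dilatations. The handling of poles through $1/f$ and the final appeal to Weyl's lemma are the right details to supply.
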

The next result is known as the Teich\-m\"uller-Wit\-tich-Belinskii theorem \cite[\S~V.6]{Lehto1973}.
\begin{lemma}\label{lemma-twb}
Let $\phi\colon\C\to\C$ be quasiconformal. Suppose that
\begin{equation}\label{c2}
\int_{\{z\in\C\colon |z|>1\}} \frac{K_\phi(z)-1}{x^2+y^2} dx\,dy < \infty.
\end{equation}
Then there exists $c\in\C\setminus\{0\}$ such that
\begin{equation}\label{c3}
\phi(z)\sim c z
\quad\text{as}\ z\to\infty .
\end{equation}
\end{lemma}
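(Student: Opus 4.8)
The plan is to pass to logarithmic coordinates, which turns the assertion into the statement that a quasiconformal self-map of a half-plane whose dilatation is integrable in ordinary planar area is asymptotically a translation, and then to prove this by length--area (modulus-distortion) estimates. First I would use that $\phi$ is a homeomorphism of $\C$ to pick $R_0>0$ with $0\notin\phi(\{z\colon|z|\ge R_0\})$; since $\exp$ is a conformal covering, $\phi$ then lifts through $z=e^{w}$ to a quasiconformal map $\Phi$ of the half-plane $\{\re w>\log R_0\}$ which commutes with $w\mapsto w+2\pi i$ and has $K_\Phi(w)=K_\phi(e^{w})$, and because $z=e^{w}$ carries $dx\,dy/(x^2+y^2)$ to $du\,dv$ the hypothesis \eqref{c2} becomes $\iint(K_\Phi-1)\,du\,dv<\infty$ over that half-plane. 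The image $\Phi(\{\re w>\log R_0\})$ is the complement of a compact set inside a half-plane; post-composing with the exterior conformal map of that complement (which is asymptotically the identity and, being conformal, does not change the dilatation) makes the image a genuine half-plane, and after a real translation together with a quasiconformal Schwarz reflection across the boundary line one obtains a quasiconformal homeomorphism $G\colon\C\to\C$ commuting with $w\mapsto w+2\pi i$, with $\|\mu_G\|_\infty<1$ and, since $|\mu_G|\le\tfrac12(K_G-1)$, also $\mu_G\in L^{1}(\C)$. Unwinding the reductions, it suffices to show that $G(w)-w$ has a finite limit as $\re w\to+\infty$.

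The heart of the matter is the standard length--area inequality: if $g$ is quasiconformal on a round annulus $A=\{r<|z|<R\}$, then
\begin{equation*}
\bigl|\,2\pi\operatorname{mod}g(A)-\log(R/r)\,\bigr|\le\iint_{A}\bigl(K_g(z)-1\bigr)\,\frac{dx\,dy}{x^2+y^2}
\end{equation*}
(and $\iint_A dx\,dy/(x^2+y^2)=2\pi\log(R/r)$), with the analogue in the $w$-plane using $du\,dv$. Applying this to the round annuli $\{1<|z|<R\}$ and, after straightening their $G$-images by a conformal map, also to $G^{-1}$, and combining with the superadditivity of the modulus for nested ring domains, I would conclude that $2\pi\operatorname{mod}G(\{1<|z|<R\})=\log R+\beta(R)$ with $\beta$ bounded and, because the error integrals over $\{R_1<|z|<R_2\}$ tend to $0$ as $R_1\to\infty$, $\beta(R)$ converging to a finite limit. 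Obtaining this two-sided control of the moduli is the first technical point. The outcome is that $G$ carries large round annuli to ring domains whose conformal moduli are asymptotically those of the corresponding round annuli.

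It then remains to upgrade this convergence of moduli to the pointwise statement $G(w)-w\to\text{const}$. The device is approximation: take $\mu_n$ to be $\mu_G$ truncated to $\{|w|<n\}$, let $G_n$ be the normalized solution of the associated Beltrami equation, so each $G_n$ is conformal outside a compact set and hence of the form $a_nw+b_n+O(1/w)$ near $\infty$; the modulus estimates, now uniform in $n$ because of the global integrability of $\mu_G$, pin $G_n(w)/w$ to within $o(1)$ of a single constant as $|w|\to\infty$, uniformly in $n$, whereupon passing to a locally uniform limit $G_n\to G$ (and using $2\pi i$-periodicity to reduce to one period strip) gives the claim; undoing all the reductions yields \eqref{c3}. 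The hard part will be precisely this passage from integral control to pointwise control, for the pointwise dilatation $K_\phi$ may be arbitrarily large on sets of small weighted area, so the global maximal dilatation is of no use and everything has to be extracted from the length--area inequality.
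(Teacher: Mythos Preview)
The paper does not prove this lemma at all: it simply states it as the classical Teich\-m\"uller--Wittich--Belinskii theorem and cites \cite[\S~V.6]{Lehto1973}. So there is no ``paper's own proof'' to compare against; your proposal is an attempt to supply a proof where the authors rely on the literature.

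As for the proposal itself, the overall strategy --- pass to logarithmic coordinates so that the weighted integral becomes an ordinary $L^1$ condition, and then control moduli of annuli via length--area --- is indeed the backbone of the classical argument, and your displayed modulus-distortion inequality is the right engine. However, several of your reduction steps are shakier than you make them sound. The ``quasiconformal Schwarz reflection'' is not automatic: reflecting a quasiconformal map across a line requires that the induced boundary homeomorphism be quasisymmetric, which you have not argued. The post-composition with an ``exterior conformal map'' to straighten the image into a half-plane is also delicate, since the image of a half-plane under a quasiconformal map need not be a Jordan domain with nice boundary, and you need to check that this step preserves the $L^1$ dilatation condition and the $2\pi i$-periodicity. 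Finally, the approximation step --- truncating $\mu_G$, solving, and passing to the limit --- is exactly where the real work lies in the classical proof, and ``the modulus estimates \dots\ pin $G_n(w)/w$ to within $o(1)$ of a single constant'' is an assertion, not an argument; modulus control on annuli does not by itself give pointwise control of $G_n(w)$, and the uniformity in $n$ needs a genuine estimate (in Lehto--Virtanen this is done via a careful analysis of the modulus of Teichm\"uller-type ring domains). So the outline is in the right direction but the last paragraph hides the entire difficulty; if you want a self-contained proof you should consult \cite[\S~V.6]{Lehto1973} for how the passage from integral to pointwise control is actually carried out.
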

\begin{remark0}
The condition~\eqref{c2} is satisfied in particular if the 
set $A$ of all $z$ satisfying $|z|>1$ where $\phi$ is not conformal satisfies
\begin{equation}\label{c4}
\int_A \frac{dx\,dy}{|z|^2} <\infty.
\end{equation}
\end{remark0}

We denote the open and closed disk of radius $r$ around a point $a\in\C$ 
by $D(a,r)$ and $\overline{D}(a,r)$. We also put $\D:=D(0,1)$.

Beurling and Ahlfors~\cite{Ahlfors1956}
characterized the homeomorphisms of $\partial\D$ which admit a 
quasiconformal extension to~$\D$; see~\cite[\S~II.7.1]{Lehto1973}.
Such homeomorphisms are called quasisymmetric.

We will use the following sufficient condition for quasisymmetry. 
It is surely known, but we did not find a reference.
\begin{lemma}\label{la10}
Let $h\colon\partial\D\to\partial\D$ be an orientation-preserving homeomorphism.
Suppose that there exists a finite subset $A$ of $\partial \D$
such that $h$ is continuously differentiable with non-zero derivative
in $\partial \D\setminus A$.
Suppose also that for all $a\in A$ there exists $\gamma_a>0$ such that 
\begin{equation}\label{c3a}
\frac{|h(ae^{it})-h(a)|}{t^{\gamma_a}}
\end{equation}
has one-sided, non-zero limits as $t\to 0^\pm$.
Then $h$ is quasisymmetric.
\end{lemma}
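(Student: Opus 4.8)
The plan is to reduce the global statement to a local one and then invoke the Beurling--Ahlfors criterion. First I would observe that since $A$ is finite and $h$ is a $C^1$ orientation-preserving homeomorphism with nonvanishing derivative away from $A$, the only possible obstruction to quasisymmetry is the behaviour of $h$ near the points of $A$. Recall that an orientation-preserving homeomorphism $h$ of $\partial\D$ is quasisymmetric if and only if there is a constant $M\geq 1$ such that for every arc $I\subset\partial\D$ and every point that splits $I$ into two subarcs $I_1,I_2$ of equal length (in the sense of the standard construction), $|h(I_1)|/|h(I_2)|\leq M$; equivalently, the ``$M$-condition'' $\tfrac1M\leq |h(I_1)|/|h(I_2)|\leq M$ holds uniformly. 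Quasisymmetry is a local-plus-uniform condition: if every point of $\partial\D$ has a neighbourhood on which the $M$-condition holds (with a uniform $M$ and a uniform scale below which it is tested), then $h$ is quasisymmetric. Away from $A$ this is automatic from $C^1$-smoothness with nonvanishing derivative (locally $h$ is bi-Lipschitz), so only finitely many points $a\in A$ remain.

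The heart of the argument is therefore the following local claim: if near $a$ one has, after the identifications $ae^{it}\mapsto t$ on the source side and $h(a)e^{is}\mapsto s$ on the target side, an asymptotic of the form $|h(ae^{it})-h(a)|\sim c_\pm |t|^{\gamma_a}$ as $t\to 0^\pm$ with $c_\pm>0$ and $\gamma_a>0$, then the $M$-condition holds on a small arc around $a$. For this I would pass to the upper half-plane model: conjugate by a Möbius map sending $\partial\D$ to $\R$ and $a\mapsto 0$, $h(a)\mapsto 0$, so that the induced boundary map $g:\R\to\R$ satisfies $g(t)\sim c_+ t^{\gamma_a}$ as $t\to0^+$ and $g(t)\sim -c_-|t|^{\gamma_a}$ as $t\to0^-$ (using that $h$ is orientation-preserving and a homeomorphism, so one side maps to one side). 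A power map $t\mapsto \operatorname{sgn}(t)|t|^{\gamma}$ is the boundary map of an explicit quasiconformal automorphism of the half-plane (indeed it is the restriction of $z\mapsto z^{\gamma}$ on the sector, which is quasiconformal for $\gamma\in(0,\infty)$), hence quasisymmetric; and the $M$-condition for quasisymmetry is stable under perturbations that are asymptotically multiplicative, i.e. replacing $g$ by a function comparable to it up to a factor tending to $1$ and with matching one-sided power behaviour preserves the uniform $M$-bound on a neighbourhood of $0$. So near each $a$ the map $h$ satisfies the $M$-condition.

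Concretely, the key estimate to establish is: for $|t|$ small and $0<s\le|t|$,
\begin{equation*}
\frac{1}{M}\le \frac{|h(ae^{i(\theta_0+t)})-h(ae^{i\theta_0})|}{|h(ae^{i\theta_0})-h(ae^{i(\theta_0-t)})|}\le M
\end{equation*}
uniformly over $ae^{i\theta_0}$ in a fixed small arc around $a$ and over admissible $t$, which I would check by splitting into the cases where $\theta_0$ and the endpoints lie on one side of $a$ (then use the power asymptotic, which gives ratios bounded in terms of $\gamma_a$ and $c_\pm/c_\mp$) versus straddle $a$ (then use both one-sided asymptotics together with continuity of $h$ and the fact that $|h(ae^{it})-h(a)|$ is monotone in $|t|$ on each side for small $t$, which follows from the nonzero limits in \eqref{c3a}). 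Combining the finitely many local $M$-conditions near points of $A$ with the bi-Lipschitz behaviour on the complementary compact set yields a global uniform $M$, hence quasisymmetry by Beurling--Ahlfors.

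The main obstacle I anticipate is the straddling case in the uniform estimate: when the testing arc $I$ contains $a$ in its interior, the two halves of $h(I)$ are governed by the two different one-sided exponents and constants, and one must verify that their ratio stays bounded as the arc shrinks to $a$ and as the dividing point slides within $I$. This is where the hypothesis that \eqref{c3a} has a \emph{nonzero} (finite) limit on each side — not merely that $h$ is, say, Hölder — is essential: it pins down the precise order of vanishing on each side and forbids oscillation, so that $|h(ae^{it})-h(a)|$ and $|h(ae^{-it})-h(a)|$ are each comparable to $|t|^{\gamma_a}$ with two-sided bounds, making the straddled ratios comparable to $s^{\gamma_a}/(|t|-s)^{\gamma_a}$ up to constants, which is bounded for $s\le |t|/2$ and handled by symmetry for $|t|/2\le s\le |t|$. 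Everything else is routine.
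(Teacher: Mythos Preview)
Your plan—localize, treat the $C^1$ complement as locally bi-Lipschitz, and handle neighbourhoods of points of $A$ via the power asymptotic—coincides in outline with the paper's sketch. The routes diverge in the local step: the paper works on \emph{one-sided} arcs with $a\in A$ as an endpoint, notes that $t\mapsto t^{\gamma}$ is quasisymmetric on $[0,1]$, appeals to closure of quasisymmetry under composition, and then cites a gluing theorem of Kelingos to patch the arcs together. You instead attempt a direct verification of the $M$-condition on a full two-sided neighbourhood of~$a$.

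Your direct verification has a real gap, and it is \emph{not} in the straddling case you single out. When the triple straddles $a$, all three arguments have modulus comparable to the half-length, so the two-sided estimate $|g(\theta)|\asymp|\theta|^{\gamma_a}$ (which does follow from the nonzero one-sided limits) controls the $M$-ratio. The trouble is the one-sided regime $0<t\ll\theta_0\ll 1$, which you dismiss with ``use the power asymptotic''. Writing $g(\theta)=c_+\theta^{\gamma_a}(1+\varepsilon(\theta))$ with $\varepsilon(\theta)\to 0$, the increment $g(\theta_0+t)-g(\theta_0)$ has principal part of order $\theta_0^{\gamma_a-1}t$, whereas the contribution of $\varepsilon$ can be as large as $\theta_0^{\gamma_a}|\varepsilon(\theta_0)|$; nothing in the hypotheses forces $|\varepsilon(\theta_0)|=o(t/\theta_0)$, so the pointwise asymptotic at $a$ does not control this increment. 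Your fallback (``away from $A$ the map is bi-Lipschitz'') does not rescue the situation either: the bi-Lipschitz constant on $[\theta_0/2,2\theta_0]$ depends on $\theta_0$, and the hypotheses do not prevent it from blowing up as $\theta_0\to 0$. The paper's composition manoeuvre is aimed precisely at this scale-matching difficulty—absorb the power map and argue the remaining factor is quasisymmetric on the closed arc—though the paper, labelling its argument a sketch, is also terse here.
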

\begin{proof}[Sketch of proof]
The restriction of $h$ to a closed arc which contains no point of $A$ 
is clearly quasisymmetric. 
To see that $h$ is quasisymmetric on an arc which has
a point of $A$ as one of its endpoints it is convenient to 
consider quasisymmetric mappings on $\R$ rather than~$\partial \D$.
Here this claim follows since $t\mapsto t^\gamma$ is quasisymmetric on $[0,1]$
and since the composition of quasisymmetric mappings is again 
quasisymmetric \cite[Theorem~9]{Kelingos1966}.
Finally, quasisymmetry in the union of these intervals (or arcs) follows 
from \cite[Theorem~3]{Kelingos1966}.
\end{proof}

Let $B\colon\D\to\D$ be a Blaschke product of degree $d\geq 2$ fixing~$0$.
Thus $B$ has the form 
\begin{equation}\label{c5}
B(z)=e^{i\alpha}z\prod_{k=1}^{d-1} \frac{z-a_k}{1-\overline{a_k}z}
\end{equation}
with $\alpha\in\R$ and $a_1,\dots,a_{d-1}\in\D$.

Suppose that $r\in(0,1)$ is such that all zeros of $B$ are in $D(0,r)$.
By a result of Walsh~\cite[Theorem~1]{Walsh1939}, all critical points of $B$ in $\D$
are also contained in $D(0,r)$.
Since $B\!\left(\overline{D}(0,r)\right)\subset D(0,r)$ by Schwarz's lemma we see that
$B^{-1}\!\left(D(0,r))\setminus \overline{D}(0,r)\right)$ is an annulus.

Branner and Fagella \cite[p.~163]{Branner2014} showed that there exists 
a quasiregular mapping $A\colon\D\to\D$  such that 
$A(z)=B(z)$ for $z\in \D\setminus B^{-1}(D(0,r))$ while
$A(z)=z^d$ for $z\in \overline{D}(0,r)$. Moreover, $A(z)\neq 0$ for $z\in\D\setminus\{0\}$.

They state this only for the case $d=2$, but their proof extends to the general case.
Note that in order to prove this result one has to define the mapping in the annulus
$B^{-1}(D(0,r))\setminus \overline{D}(0,r))$. The construction of this mapping is done
in~\cite[Exercise 2.3.3]{Branner2014} for arbitrary degree.

Branner and Fagella also note that we may choose $A$ to depend continuously on~$B$.
This implies that the dilatation $K(A)$ of $A$ also depends continuously on~$B$.
Restricting to a compact set of Blaschke products $B$ we find that the dilatation
of the corresponding quasiregular maps $A$ is uniformly bounded.

We summarize the above discussion in the following result.
\begin{lemma}\label{la7}
Let $0<r_1<r_2<1$ and 
let $B\colon \D\to\D$ be a Blaschke product of degree~$d\geq 2$.
Suppose that $B(0)=0$ and that the zeros of $B$ are contained in $D(0,r_1)$.
Then there exists a quasiregular mapping $A\colon\D\to\D$ and a neighborhood 
$W$ of $\partial \D$ such that 
$A(z)=z^d$ for $z\in D(0,r_2)$, $A(z)=B(z)$ for $z\in W\cap\D$ and
$A(z)\neq 0$ for $z\in\D\setminus\{0\}$.

Moreover, there exists a constant $C$ depending only on $r_1$, $r_2$ and $d$ such that
$A$ may be chosen to satisfy $K(A)\leq C$.
\end{lemma}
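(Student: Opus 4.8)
The plan is to organize the discussion preceding the statement into a clean argument: the substantive input is the Branner and Fagella surgery, and the rest is topological bookkeeping together with a compactness argument for the dilatation bound. First I would fix $r:=r_2$ (any $r\in[r_2,1)$ works equally well). Since the zeros of $B$ lie in the convex set $D(0,r_1)$, Walsh's theorem~\cite[Theorem~1]{Walsh1939} gives that all critical points of $B$ in $\D$ lie in $D(0,r_1)\subset D(0,r)$, and hence so do all critical values of $B$ in $\D$, by Schwarz's lemma. As $B$ has degree $d\geq 2$ it is not a rotation, so Schwarz's lemma also yields $|B(z)|<|z|$ for $z\in\D\setminus\{0\}$; in particular $\sup_{|z|\le r}|B(z)|<r$, so $\overline{D}(0,r)\subset V:=B^{-1}(D(0,r))$, and since $|B(z)|\to 1$ as $|z|\to 1$ the open set $V$ is compactly contained in $\D$.

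Next I would check that $V$ is a simply connected domain, so that $\mathcal{A}:=V\setminus\overline{D}(0,r)$ is a topological annulus with inner boundary the circle $\partial D(0,r)$ and outer boundary the Jordan curve $\partial V=B^{-1}(\partial D(0,r))$, and that $\mathcal{A}$ contains no critical point of $B$. Connectedness of $V$ follows because $B^{-1}(0)$ consists of the zeros of $B$, all contained in $\overline{D}(0,r)$, hence in the component $V_0$ of $V$ containing $\overline{D}(0,r)$; therefore $B|_{V_0}\colon V_0\to D(0,r)$ is proper of degree equal to the cardinality of its fiber over $0$, namely $d$, which forces $V=V_0$. Simple connectedness follows from Riemann-Hurwitz applied to the proper degree-$d$ map $B\colon V\to D(0,r)$: since all $d-1$ critical points of $B$ in $\D$ lie in $V$, one gets $\chi(V)=d-(d-1)=1$. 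The absence of critical points in $\mathcal{A}$ is immediate since they all lie in $D(0,r_1)\subset\overline{D}(0,r)$; in particular $B$ restricts on $\partial V$ to a covering of $\partial D(0,r)$ of degree $d$.

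Then I would invoke the Branner and Fagella quasiconformal surgery~\cite[p.~163 and Exercise~2.3.3]{Branner2014}: since the boundary maps $B\colon\partial V\to\partial D(0,r)$ and $z\mapsto z^d$ on $\partial D(0,r)$ both have degree $d$, one can interpolate across $\mathcal{A}$ by a quasiregular map with $A(\mathcal{A})\subset\{w:r^d<|w|<r\}$ and obtain a quasiregular $A\colon\D\to\D$ with $A(z)=z^d$ on $\overline{D}(0,r)\supseteq D(0,r_2)$ and $A(z)=B(z)$ on $\D\setminus V$. Setting $W:=\C\setminus\overline{V}$, which is a neighborhood of $\partial\D$ because $\overline{V}$ is a compact subset of $\D$, we get $A=B$ on $W\cap\D$. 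Finally $A(z)\neq 0$ for $z\in\D\setminus\{0\}$, since $z\mapsto z^d$ vanishes on $\overline{D}(0,r)$ only at $0$, since $A(\mathcal{A})$ omits $0$, and since $B$ omits $0$ on $\D\setminus V$ (its zeros lying in $D(0,r_1)\subset V$).

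It remains to produce the bound $K(A)\leq C$ with $C$ depending only on $r_1$, $r_2$, $d$. Because $A$ is holomorphic off $\mathcal{A}$, the dilatation $K(A)$ equals that of the interpolation on $\mathcal{A}$, which Branner and Fagella arrange to depend continuously on $B$; thus $K(A)$ is bounded locally uniformly on the parameter space $(\R/2\pi\Z)\times D(0,r_1)^{d-1}$ of the admissible Blaschke products. Since Walsh's theorem keeps the critical points in the convex hull of the zeros, I may enlarge this to the compact parameter space with zeros in $\overline{D}(0,r_1)$ -- still inside $D(0,r_2)=D(0,r)$ because $r_1<r_2$, so the whole construction still applies -- and take $C$ to be the resulting finite supremum of $K(A)$; it depends only on $r_1$, $r_2$ and $d$. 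The step I expect to require the most care is precisely this last one: verifying that the surgery, and hence $K(A)$, depends on $B$ in a locally uniformly bounded way, so that the compactness argument closes. The topological preliminaries and the non-vanishing bookkeeping are routine, and the construction inside $\mathcal{A}$ is the cited one of Branner and Fagella.
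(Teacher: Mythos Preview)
Your proposal is correct and follows essentially the same approach as the paper's own argument, which is given in the discussion immediately preceding the lemma rather than as a separate proof: Walsh's theorem plus Schwarz's lemma to get the annulus structure of $B^{-1}(D(0,r))\setminus\overline{D}(0,r)$, the Branner--Fagella surgery across that annulus, and then continuous dependence plus compactness of the parameter space for the uniform dilatation bound. You supply more detail than the paper does (the Riemann--Hurwitz verification of simple connectivity, the explicit compactification of the parameter space), and one phrasing is slightly off---Walsh's result for Blaschke products is a disk statement rather than a convex-hull statement in the Gauss--Lucas sense---but this does not affect the argument since the disk version is exactly what you use.
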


We will also need the following result.

\begin{lemma}\label{la9}
Let $U$ be a simply connected, unbounded domain in~$\C$
which is bounded by piecewise analytic curves.
Suppose that each disk $D(0,t)$ intersects only finitely many of these boundary curves.

Let $f\colon\overline{U}\to\C$ be a bounded, continuous function which is holomorphic
in $U$. Suppose that there exist $r,R>0$ such  that $|f(z)|>r$ for all $z\in U$ while  
$|f(z)|=r$ for all $z\in \partial U$ satisfying $|z|>R$.
Then there exists a curve $\gamma$ tending to $\infty$ in $U$ and 
$a\in\partial D(0,r)$ such that $f(z)\to a$ as $z\to\infty$ on~$\gamma$.
\end{lemma}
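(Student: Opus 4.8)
The plan is to reduce Lemma~\ref{la9} to a standard Lindel\"of-type statement about bounded harmonic functions on a simply connected domain, by first massaging the geometry of $U$ and then applying a Phragm\'en--Lindel\"of / normal family argument to pick out the asymptotic curve. First I would reduce to the case where $U$ is a simply connected domain having $\infty$ on its boundary in a mild sense: since each $D(0,t)$ meets only finitely many of the piecewise analytic boundary arcs, the part of $\partial U$ outside $D(0,R)$ consists of finitely many analytic arcs tending to $\infty$ (after discarding a bounded piece), and the hypothesis forces $|f|=r$ on all of them. Replacing $f$ by $f/a_0$ for a suitable constant we may assume $r=1$, so $g:=\log|f|$ is a positive harmonic function on $U$ vanishing continuously on $\partial U \cap \{|z|>R\}$ and bounded above by $\log R' $ where $R'=\sup_U|f|<\infty$.

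Next I would uniformize: let $\psi\colon\D\to U$ be a Riemann map. By Carath\'eodory's theorem, since $\partial U$ is locally connected (it is piecewise analytic and each disk meets finitely many arcs), $\psi$ extends continuously to $\overline{\D}$, and the preimage of $\infty$ is a single point $\zeta_0\in\partial\D$ (because $U$ is unbounded and $\infty$ is an accessible boundary point reached along the boundary arcs; distinct prime ends over $\infty$ would force infinitely many boundary arcs to accumulate in a bounded disk, contradicting the hypothesis). Then $F:=f\circ\psi$ is bounded and holomorphic on $\D$, continuous on $\overline{\D}\setminus\{\zeta_0\}$, with $|F|=1$ on $\partial\D\setminus\{\zeta_0\}$ and $|F|>1$ on $\D$. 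Now a curve in $U$ tending to $\infty$ corresponds to a curve in $\D$ tending to $\zeta_0$, and I want a curve along which $F$ has a limit of modulus $1$.

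The core step is then the following: a bounded holomorphic function $F$ on $\D$ which is continuous with $|F|=1$ on $\partial\D$ minus a single point must have an asymptotic value at that point. I would prove this by the classical device of looking at $\Phi:=\log F$ (a branch exists since $F\neq 0$ on $\D$ as $|F|>1$ there), which has bounded, in fact non-negative, real part; composing with a conformal map of the half-plane $\{\re w>0\}$ to $\D$ and applying the Fatou--Lindel\"of theorem gives that $\Phi$, hence $F$, has a radial limit at $\zeta_0$ along almost every direction, but that is not quite enough since I need the limit to have modulus exactly $1$. To force modulus $1$ I would instead argue directly: the subharmonic function $\re\Phi=\log|F|\ge 0$ is bounded and vanishes on $\partial\D\setminus\{\zeta_0\}$, so by the two-constants theorem / Phragm\'en--Lindel\"of applied in a lens-shaped neighborhood of $\zeta_0$ one gets $\log|F(z)|\to 0$ as $z\to\zeta_0$ nontangentially, in fact $\log|F(z)| = O(\omega(z))$ where $\omega$ is harmonic measure of a small boundary arc around $\zeta_0$, which tends to $0$; hence $|F(z)|\to 1$ as $z\to\zeta_0$ in any Stolz angle. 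Then along, say, the radius $t\mapsto t\zeta_0$, $t\to1^-$, the function $F$ is bounded with $|F|\to 1$; by a normal-families / cluster-set argument (the cluster set of $F$ at $\zeta_0$ along this radius is a connected compact subset of $\partial\D$, and if it is not a single point it is an arc, which one excludes using that $F$ omits $\overline{\D}$ so $1/F$ maps $\D$ into $\D$ and Lindel\"of's theorem on asymptotic vs.\ radial limits) one concludes $F(t\zeta_0)$ converges to some $a\in\partial\D$. Pulling back through $\psi$, the image $\gamma:=\psi(\{t\zeta_0\colon t_0<t<1\})$ is a curve tending to $\infty$ in $U$ with $f\to a\in\partial D(0,r)$ along it, which is the claim.

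The main obstacle I anticipate is the boundary-correspondence bookkeeping: justifying that $\psi^{-1}(\infty)$ is a single point and that curves to $\infty$ in $U$ correspond exactly to curves to $\zeta_0$ in $\D$. This is where the somewhat unusual hypothesis ``each $D(0,t)$ intersects only finitely many boundary curves'' is essential — it prevents $\partial U$ from oscillating wildly near $\infty$ and guarantees local connectivity of $\partial U$ in $\bC$ at $\infty$, so Carath\'eodory's extension theorem applies. An alternative, perhaps cleaner route avoiding uniformization altogether would be to work in $U$ directly: use the finitely many analytic boundary arcs going to $\infty$ to build an exhaustion of $U$ by domains $U_n=U\cap D(0,n)$, note $\log|f|$ is harmonic and positive on $U$ and vanishes on the part of $\partial U$ outside $D(0,R)$, apply a Phragm\'en--Lindel\"of argument (using that $\log|f|$ is bounded) to show $\log|f(z)|\to 0$ as $z\to\infty$ in $U$, and then extract the asymptotic curve by a standard argument choosing nested level curves $\{|f|=1+1/n\}$ and a connectivity/compactness argument on the cluster set at $\infty$; I would present this second approach if the boundary regularity turns out to be delicate, as it only uses that $\partial U$ outside a large disk is a finite union of analytic arcs to $\infty$.
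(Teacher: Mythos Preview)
Your overall strategy---uniformize by a Riemann map $\psi\colon\D\to U$ and study $F=f\circ\psi$---is exactly what the paper does, but two steps in your execution do not go through as written.

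First, the claim that $\psi^{-1}(\infty)$ is a single point is false under the stated hypotheses. Take $U$ to be a horizontal strip: its boundary consists of two lines, every disk meets both, yet the Riemann map to $\D$ sends $\infty$ to two distinct points of $\partial\D$. Your justification (``distinct prime ends over $\infty$ would force infinitely many boundary arcs to accumulate in a bounded disk'') is simply wrong---finitely many unbounded boundary arcs can produce finitely many, but more than one, prime ends at $\infty$. The paper does not attempt this claim: it sets $E=\phi^{-1}(\infty)$, observes only that $E$ is compact, and invokes a theorem of Beurling to conclude that $E$ has logarithmic (hence analytic) capacity zero.

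Second, and more seriously, even after localizing to one $\zeta_0\in E$, your passage from ``$|F(z)|\to 1$ as $z\to\zeta_0$ nontangentially'' to ``$F(t\zeta_0)\to a$ for some $a\in\partial\D$'' is not justified. Knowing that the radial cluster set is a connected compact subset of $\partial\D$ does not by itself rule out a nondegenerate arc, and Lindel\"of's theorem runs the wrong direction: it \emph{assumes} an asymptotic value exists and then identifies it with the nontangential limit---it cannot be used to produce the asymptotic value in the first place. The paper closes this gap by a different and cleaner device: since $|g|=r$ on an arc $A\supset E$ punctured at $E$ and $g$ is zero-free on $\D$ (because $|g|>r$ there), Schwarz reflection extends $g$ holomorphically across $A\setminus E$; the extension is bounded on both sides, and since $E$ has analytic capacity zero it is removable for bounded holomorphic functions. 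Hence $g$ extends holomorphically across all of $E$, the value $a=g(\xi)$ at any $\xi\in E$ satisfies $|a|=r$, and $\gamma(t)=\phi(t\xi)$ is the desired curve. This reflection--removability step is precisely the missing idea; once you have it, the Phragm\'en--Lindel\"of detour for $|F|$ becomes unnecessary. Your alternative route at the end (work directly in $U$ and extract the curve from nested level sets) has the same defect: getting $|f|\to r$ at $\infty$ is not the issue, controlling $\arg f$ is.
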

\begin{proof}
Let $\phi\colon \D\to U$ be a conformal mapping.
The hypotheses imply that the boundary of $U$ in $\bC$ is
locally connected. Thus $\phi$ has a continuous extension
$\phi\colon\overline{\D}\to\overline{U}\cup\{\infty\}$.

Put $E=\phi^{-1}(\infty)$. Then $E$ is a compact subset of $\partial\D$.
By a result of Beurling \cite[Theorem~9.19]{Pommerenke1992},
$E$ has logarithmic capacity zero.
Put $g=f\circ\phi$.
There exists an open arc $A$ containing $E$ such that 
$|g(z)|=r$ for $z\in A\setminus E$.
Noting that $|g(z)|>r$ and thus $g(z)\neq 0$ for $z\in\D$,
we deduce from the Schwarz reflection principle that $g$ can be extended to a function
holomorphic in $\D\cup (A\setminus E)\cup (\C\setminus\overline{\D})$.

Since $E$ has logarithmic capacity zero, it also
has analytic capacity zero; see, e.g., \cite[Proposition~3.5]{Zalcman1968}.
Since $g$ is bounded this yields that
$g$ has a holomorphic extension to $\D\cup A\cup (\C\setminus\overline{\D})$;
see \cite[Appendix~II]{Zalcman1968}.

Next we note that $U$ is unbounded and that $\infty$ is accessible in~$U$.
Thus $E\neq\emptyset$. Taking a point $\xi\in E$ we have
$\phi(t\xi)\to\infty$ as $t\to 1$.
The conclusion follows for the curve $\gamma\colon [0,1)\to\C$, $\gamma(t)=\phi(t\xi)$,
and $a=g(\xi)$.
\end{proof}

We will also use the following result about the growth of composite
meromorphic functions; see~\cite[Satz 2.3 and Satz 5.7]{Bergweiler1984}
and~\cite[Corollary~4]{Bergweiler1990}.
\begin{lemma}\label{la8}
Let $f$ be a meromorphic function and $g$ be an entire function. Then
\begin{equation} \label{c6}
\rho(g)\liminf_{r\to\infty}\frac{\log T(r,f)}{\log \log r}
\leq
\rho(f\circ g)
\leq
\rho(g)\limsup_{r\to\infty}\frac{\log T(r,f)}{\log \log r}.
\end{equation}
\end{lemma}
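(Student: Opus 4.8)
The plan is to reduce the statement to two classical comparison inequalities between $T(r,f\circ g)$ and the characteristic of $f$ at radii comparable to $M(r,g)$, and then to read off the order by a short computation with upper and lower limits. I would first assume that $f$ is non-constant (otherwise $f\circ g$ is constant) and that $g$ is transcendental — for polynomial $g$ the right-hand sides degenerate, and this is in any case the intended generality; note also that $\rho(f\circ g)=\infty$ as soon as $f$ has positive order, so the substantive case is $f$ of order zero. For a transcendental entire $g$ and a meromorphic $f$ one has, as $r\to\infty$,
\[
T(r,f\circ g)\le\bigl(1+o(1)\bigr)\,T\!\left(M(r,g),f\right)
\qquad\text{and}\qquad
T(r,f\circ g)\ge c\,T\!\left(M(\alpha r,g)^{1-\delta},f\right)
\]
for suitable constants $c>0$ and $\alpha,\delta\in(0,1)$. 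These go back to P\'olya, with refinements by Clunie and by Edrei and Fuchs, and are exactly what the results of \cite{Bergweiler1984,Bergweiler1990} cited for the lemma provide. (For entire $f$ the first inequality is elementary: $|g|\le M(r,g)$ on $|z|=r$ gives $m(r,f\circ g)\le\log^{+}M(M(r,g),f)\le 3T(2M(r,g),f)$, while $N(r,f\circ g)=0$; for meromorphic $f$ one argues instead with the Ahlfors--Shimizu characteristic $T_{0}$, which differs from $T$ by a bounded term.)

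Granting these, set $\rho=\rho(g)=\limsup_{r\to\infty}\frac{\log\log M(r,g)}{\log r}$ and, for the non-constant meromorphic $f$, $\ell_{+}=\limsup_{s\to\infty}\frac{\log T(s,f)}{\log\log s}$, $\ell_{-}=\liminf_{s\to\infty}\frac{\log T(s,f)}{\log\log s}$ (both at least $1$, since $T(s,f)\to\infty$); since $g$ is transcendental, $M(r,g)\to\infty$. Taking logarithms in the first inequality, for every $\varepsilon>0$ and all large $r$,
\[
\log T(r,f\circ g)\le\log T\!\left(M(r,g),f\right)+O(1)\le(\ell_{+}+\varepsilon)\log\log M(r,g)+O(1)\le(\ell_{+}+\varepsilon)(\rho+\varepsilon)\log r+O(1),
\]
so, dividing by $\log r$ and letting $\varepsilon\to0$, $\rho(f\circ g)\le\rho\,\ell_{+}$. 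For the lower bound, choose $r_{k}\to\infty$ with $\log\log M(\alpha r_{k},g)\ge(\rho-\varepsilon)\log r_{k}$ (possible because $\rho=\limsup_{s\to\infty}\frac{\log\log M(s,g)}{\log s}$, after the harmless rescaling $s=\alpha r$), and use $T(s,f)\ge(\log s)^{\ell_{-}-\varepsilon}$ for large $s$, with $s=M(\alpha r_{k},g)^{1-\delta}$; the second inequality then gives
\[
\log T(r_{k},f\circ g)\ge(\ell_{-}-\varepsilon)\log\log M(\alpha r_{k},g)+O(1)\ge(\ell_{-}-\varepsilon)(\rho-\varepsilon)\log r_{k}+O(1),
\]
whence $\rho(f\circ g)=\limsup_{r\to\infty}\frac{\log T(r,f\circ g)}{\log r}\ge(\ell_{-}-\varepsilon)(\rho-\varepsilon)$, and $\varepsilon\to0$ yields $\rho(f\circ g)\ge\rho\,\ell_{-}$. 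This proves \eqref{c6}.

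The step I expect to be the real obstacle is the lower comparison inequality. Its substance is a P\'olya-type covering fact: a transcendental entire $g$ maps $\{|z|<r\}$ onto essentially all of a disk of radius comparable to $M(\alpha r,g)$, and does so with a multiplicity that grows, in the sense that $n(r,g=w)\ge N(r,g=w)/\log r$ with $N(r,g=w)\asymp T(r,g)\asymp\log M(r,g)$ for all $w$ outside a small exceptional set — the last point being the second main theorem applied to $g$. The growing multiplicity is essential: if one used only that $g$ covers the disk once, one would lose a factor of order $\log M(r,g)$ and obtain merely $\rho(f\circ g)\ge\rho$ in place of $\rho\,\ell_{-}$. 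The natural bookkeeping device is the identity $T_{0}(r,f\circ g)=\tfrac1\pi\iint_{\C}f^{\#}(w)^{2}\,N(r,g=w)\,du\,dv$, where $f^{\#}=|f'|/(1+|f|^{2})$, obtained by the change of variables $w=g(z)$ in the Ahlfors--Shimizu integral; bounding $N(r,g=w)$ from below off the exceptional set makes $T_{0}(r,f\circ g)$ at least the multiplicity times $\iint_{|w|<M(\alpha r,g)^{1-\delta}}f^{\#}(w)^{2}\,du\,dv$, and this area integral is $\gtrsim T_{0}(M(\alpha r,g)^{1-\delta},f)/\log M(\alpha r,g)$ by convexity of $T_{0}$ in the logarithm of the radius, so that the two factors of $\log M$ cancel. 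The one genuinely delicate point is to ensure that the exceptional set for $g$ carries only a negligible part of the spherical-area mass $f^{\#}(w)^{2}\,du\,dv$; once this is arranged the argument closes. Since all of this is classical and already packaged in the references cited for the lemma, in the present paper it may simply be invoked.
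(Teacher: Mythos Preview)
The paper does not prove this lemma at all: it is stated with an immediate citation to \cite[Satz~2.3 and Satz~5.7]{Bergweiler1984} and \cite[Corollary~4]{Bergweiler1990}, followed by a remark that the additional hypothesis~\eqref{c7} assumed in \cite{Bergweiler1984} for the left inequality is removed in \cite{Bergweiler1990}. So there is nothing to compare your argument against \emph{in this paper}; the lemma is simply quoted.

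That said, your sketch is a faithful outline of how the cited results are obtained. The two comparison inequalities you take as input---an upper bound $T(r,f\circ g)\le(1+o(1))T(M(r,g),f)$ via the Ahlfors--Shimizu characteristic, and a P\'olya--Clunie--Edrei--Fuchs type lower bound---are precisely the machinery of \cite{Bergweiler1984,Bergweiler1990}, and the passage from these to~\eqref{c6} by taking $\limsup$'s along a sequence realizing $\rho(g)$ is routine and correctly executed. Your identification of the lower comparison as the genuine difficulty, and your Ahlfors--Shimizu change-of-variables heuristic $T_0(r,f\circ g)=\pi^{-1}\iint f^{\#}(w)^2 N(r,g=w)\,du\,dv$ together with a second-main-theorem bound on $N(r,g=w)$, is exactly the mechanism behind the cited lower estimate. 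Your closing sentence---that in the present paper this may simply be invoked---is in fact what the authors do.
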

In~\cite[Satz 5.7]{Bergweiler1984} the left inequality in~\eqref{c6}
 is proved only under the additional hypothesis that
\begin{equation} \label{c7}
\limsup_{r\to\infty}\frac{\log T(r,f)}{\log \log r}<\infty.
\end{equation}
This additional hypothesis is removed in~\cite[Corollary~4]{Bergweiler1990}.
It will be satisfied, however, in our applications.
In fact, we will consider only the case where $(\log T(r,f))/\log\log r$ tends
to a finite limit.
In this case we deduce from~\eqref{c6} that
\begin{equation} \label{c8}
\rho(f\circ g)
=
\rho(g)\lim_{r\to\infty}\frac{\log T(r,f)}{\log \log r}.
\end{equation}

\section{Proofs of Theorems \ref{thm1}--\ref{thm5}} \label{proofs1}
\begin{proof}[Proof of Theorem \ref{thm1}]
Let $f$ be a transcendental entire function which has only finitely many
islands over some Jordan domain~$D$. Then there exists a connected 
component $U$ of $f^{-1}(D)$ which is unbounded. This implies that 
the minimum modulus $\min_{|z|=r}|f(z)|$ is bounded. 
Wiman's theorem (Lemma~\ref{la3}) now yields 
that $\rho(f)\geq 1/2$.

For the converse result, let $D$ be a Jordan domain in~$\C$. Without loss
of generality we may assume that $0\in D$. Thus there exists $\varepsilon>0$
such that $D(0,\varepsilon)\subset D$. 
For $\rho\in [1/2,1)$ we consider the function $g$ defined by
\begin{equation} \label{a7a}
g(z)=\prod_{n=1}^\infty \left(1-\frac{z}{n^{1/\rho}}\right)
\end{equation}
Then $g$ is a function of order $\rho$ 
which has only positive zeros and satisfies $g(x)\to 0$ as $x\to\infty$.
For $\rho\in (1/2,1)$ the last statement follows from, e.g., \cite[Theorem 4.1.8]{Boas1954}
while for $\rho=1/2$ it follows from the explicit representation
\begin{equation} \label{a7}
g(z)=\frac{\sin\sqrt{\pi z}}{\sqrt{\pi z}}
\end{equation}
We conclude that for sufficiently small $\delta>0$ the function 
$f:=\delta g$ has no island over $D(0,\varepsilon)$ and hence no island over~$D$.

To obtain functions of order $\rho\in [1,\infty)$ we write
$\rho=p\rho_0$ with $p\in\N$ and $\rho_0\in[1/2,1)$. Choosing $g$ 
of order $\rho_0$ and $\delta>0$ as above we find that the function $f$ given by
$f(z)=\delta g(z^p)$ has order $\rho$ and that $f$ has no island over~$D$.
\end{proof}
\begin{proof}[Proof of Theorem \ref{thm2}]
Let $r>0$ be such that the closures of $D_1$ and $D_2$ and all finite asymptotic 
values are contained in $D(0,r)$.
Note that by the Denjoy-Carleman-Ahlfors theorem (Lemma~\ref{la2})
there are only finitely many asymptotic values.
We also assume that $\partial D(0,r)$ contains no critical value.
This can be achieved since the set of critical values is countable.

We consider the following graph $\Gamma$ on the sphere~$\bC$.
It has two vertices, which we denote by $\times$ and $\circ$ and which lie
on $\partial D(0,r)$, and three edges, 
two of which are arcs on the circle $\partial D(0,r)$ connecting 
$\times$ and $\circ$, while the third edge is a crosscut of $D(0,r)$ 
connecting $\times$ and $\circ$ which separates $\overline{D_1}$ and $\overline{D_2}$
and which contains no critical or asymptotic value.
The components of the complement of the set of vertices and edges are called faces.
We then have three faces.
The face $\bC\setminus\overline{D}(0,r)$ will be denoted by $F_\infty$
and, for $j\in\{1,2\}$, the face containing $\overline{D_j}$ 
will be denoted by~$F_j$.

We consider the preimage $\Gamma^* = f^{-1}(\Gamma)$ of $\Gamma$.
It yields a partition of the plane into faces, edges and vertices.
It is similar to a \emph{line complex}; see~\cite[Chapter~7, Section~4]{Goldberg2008}.
One difference is that a line complex is always connected, while $\Gamma^*$ need not be
connected. We will see, however, that $\Gamma^*$ is connected if $r$ is chosen sufficiently large.

For our purposes only the topology of $\Gamma^*$ is relevant.
Thus we do not distinguish between the preimage $\Gamma^*$ and its 
image under a homeomorphism of the plane.
In figures like Figure~\ref{line-complex1}
we usually draw only a homeomorphic image 
of $\Gamma^*=f^{-1}(\Gamma)$, not the true preimage.
In such figures we use the labels~\textcircled{\footnotesize 1},
\textcircled{\footnotesize 2} and
\textcircled{\footnotesize $\infty$}
for the faces $F_1$, $F_2$ and $F_\infty$
as well as for their preimages in~$\Gamma^*$.
The same remark applies to the vertices $\times$ and~$\circ$.

Figure~\ref{line-complex1} shows $\Gamma$ and $\Gamma^*$ for a
function having only islands of multiplicity~$2$
over $D_1$ and $D_2$, and no critical or finite asymptotic values outside $D_1$ and~$D_2$.
An example is given by the sine or cosine function if $-1\in D_1$ and $1\in D_2$.
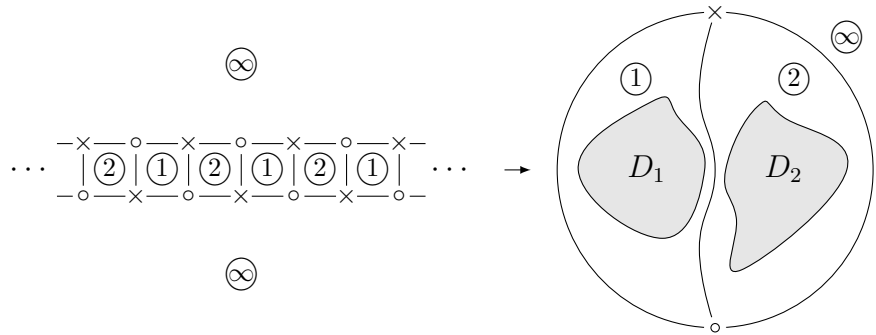
\begin{figure}[!htb]
\captionsetup{width=.85\textwidth}
\centering
\begin{tikzpicture}[scale=0.7,>=latex](-5,-5)(5,5)
\draw[->] (5,0) -- (5.5,0);
\draw (9,3) arc (90:270:3);
\draw (9,3) arc (90:-90:3);
\draw plot [smooth, tension=0.8] coordinates { (9.0,3.0) (8.7,1.5) (9.0,0.0) (8.7,-1.5) (9.0,-3.0) };
\filldraw[white] (9,-3) circle (0.2);
\filldraw[white] (9,3) circle (0.2);
\node at (9,3) {\footnotesize $\times$};
\draw (9,-3) circle (0.08);
\filldraw [gray!20] plot [smooth cycle, tension=0.8] coordinates { (6.4,0) (7.8,1.3) (8.3,1) (8.8,0) (8.0,-1.3) };
\draw plot [smooth cycle, tension=0.8] coordinates { (6.4,0) (7.8,1.3) (8.3,1) (8.8,0) (8.0,-1.3) };
\node[anchor=mid] at (7.7,0) {$D_1$};
\node[anchor=mid] at (7.5,1.7) {\textcircled{\footnotesize 1}};
\filldraw [gray!20] plot [smooth cycle, tension=0.8] coordinates { (9.2,0) (9.8,1.2) (10.3,1) (11.5,0) (9.5,-1.9) (9.4,-0.9) };
\draw plot [smooth cycle, tension=0.8] coordinates { (9.2,0) (9.8,1.2) (10.3,1) (11.5,0) (9.5,-1.9) (9.4,-0.9) };
\node[anchor=mid] at (10.3,0) {$D_2$};
\node[anchor=mid] at (10.5,1.7) {\textcircled{\footnotesize 2}};
\node[anchor=mid] at (11.5,2.5) {\textcircled{\footnotesize $\infty$}};
\node at (-4,0) {$\cdots$};
\node at (4,0) {$\cdots$};
\foreach \x in {-3,-1,...,3}
{
\node at (\x,0.5) {\footnotesize$\times$};
\node at (\x,-0.5) {\footnotesize$\circ$};
}
\foreach \x in {-2,0,2}
{
\node at (\x,0.5) {\footnotesize$\circ$};
\node at (\x,-0.5) {\footnotesize$\times$};
\node[anchor=mid] at (\x-0.5,0) {\textcircled{\footnotesize 2}};
\node[anchor=mid] at (\x+0.5,0) {\textcircled{\footnotesize 1}};
}
\foreach \x in {-3,-2,...,2}
{
\draw[-] (\x+0.2,0.5) -- (\x+0.8,0.5);
\draw[-] (\x+0.2,-0.5) -- (\x+0.8,-0.5);
\draw[-] (\x,-0.3) -- (\x,0.3);
}
\node[anchor=mid] at (0,2) {\textcircled{\footnotesize $\infty$}};
\node[anchor=mid] at (0,-2) {\textcircled{\footnotesize $\infty$}};
\draw[-] (3,-0.3) -- (3,0.3);
\draw[-] (-3.5,-0.5) -- (-3.2,-0.5);
\draw[-] (-3.5,0.5) -- (-3.2,0.5);
\draw[-] (3.5,-0.5) -- (3.2,-0.5);
\draw[-] (3.5,0.5) -- (3.2,0.5);
\end{tikzpicture}
\caption{The graphs $\Gamma$ (right) and $\Gamma^*$ (left).}
\label{line-complex1}
\end{figure}

Figure~\ref{line-complex2} shows $\Gamma^*$ for a
function having one simple island and one island of multiplicity $4$
over $D_1$, one island of multiplicity $3$ over $D_2$, while all 
other islands over $D_1$ and $D_2$ have multiplicity~$2$.

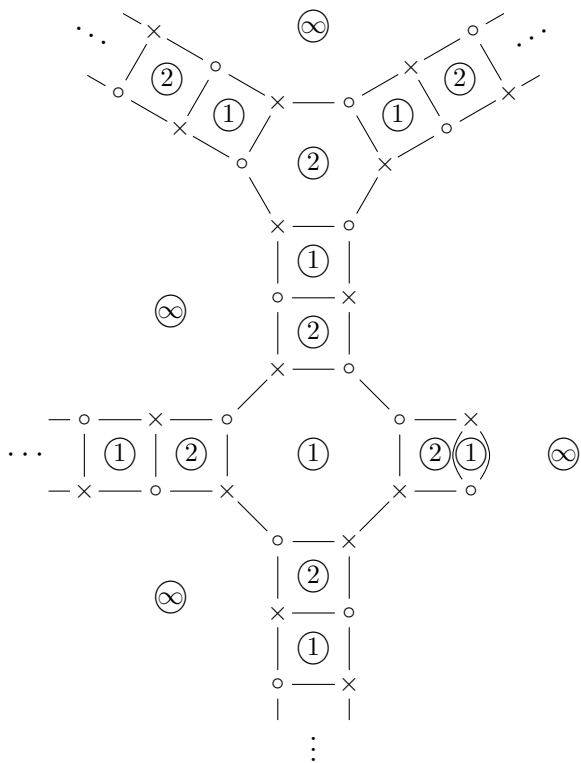
\begin{figure}[!htb]
\captionsetup{width=.85\textwidth}
\centering
\begin{tikzpicture}[scale=0.95,>=latex](-5,-5)(5,5)
\foreach \x in {-3.207,-2.207,1.207}
{
\draw[-] (\x+0.2,0.5) -- (\x+0.8,0.5);
\draw[-] (\x+0.2,-0.5) -- (\x+0.8,-0.5);
}
\draw[-] (-3.407,0.5) -- (-3.707,0.5);
\draw[-] (-3.407,-0.5) -- (-3.707,-0.5);
\node at (-4,0) {$\cdots$};
\foreach \x in {-3.207,-2.207,-1.207,1.207}
{
\draw[-] (\x,-0.3) -- (\x,0.3);
}
\foreach \y in {-3.207,-2.207,1.207,2.207}
{
\draw[-] (-0.5,\y+0.2) -- (-0.5,\y+0.8);
\draw[-] (0.5,\y+0.2) -- (0.5,\y+0.8);
}
\foreach \y in {-3.207,-2.207,-1.207,1.207,2.207,3.207,4.939}
{
\draw[-] (-0.3,\y) -- (0.3,\y);
}
\draw[-] (-0.5,-3.407) -- (-0.5,-3.707);
\draw[-] (0.5,-3.407) -- (0.5,-3.707);
\node at (0,-4) {$\vdots$};
\draw[-] (-0.5,-1.207) -- (-1.207,-0.5);
\draw[-] (0.5,-1.207) -- (1.207,-0.5);
\draw[-] (0.5,1.207) -- (1.207,0.5);
\draw[-] (-0.5,1.207) -- (-1.207,0.5);
\draw[-] (-0.5,3.207) -- (-1.,4.073);
\draw[-] (0.5,3.207) -- (1.,4.073);
\draw[-] (0.5,4.939) -- (1.,4.073);
\draw[-] (-0.5,4.939) -- (-1.,4.073);
\draw[-] (-0.5,4.939) -- (-1.366,5.439);
\draw[-] (-1.366,5.439) -- (-2.232,5.939);
\draw[-] (0.5,4.939) -- (1.366,5.439);
\draw[-] (1.366,5.439) -- (2.232,5.939);
\draw[-] (-1.,4.073) -- (-1.866,4.573);
\draw[-] (1.,4.073) -- (1.866,4.573);
\draw[-] (1.866,4.573) -- (2.732,5.073);
\draw[-] (-1.866,4.573) -- (-2.732,5.073);
\draw[-] (1.866,4.573) -- (1.366,5.439);
\draw[-] (-1.866,4.573) -- (-1.366,5.439);
\draw[-] ((2.732,5.073) -- (2.232,5.939);
\draw[-] ((-2.732,5.073) -- (-2.232,5.939);
\draw[-] (2.232,5.939) -- (2.665,6.189);
\draw[-] (-2.232,5.939) -- (-2.665,6.189);
\draw[-] ((2.732,5.073) -- (3.165,5.323);
\draw[-] ((-2.732,5.073) -- (-3.165,5.323);
\foreach \x in {-2.707,0,2.207}
{
\node[anchor=mid] at (\x,0) {\textcircled{\footnotesize 1}};
}
\foreach \x in {-1.707,1.707}
{
\node[anchor=mid] at (\x,0) {\textcircled{\footnotesize 2}};
}
\foreach \y in {-2.707,2.707}
{
\node[anchor=mid] at (0,\y) {\textcircled{\footnotesize 1}};
}
\foreach \y in {-1.707,1.707,4.073}
{
\node[anchor=mid] at (0,\y) {\textcircled{\footnotesize 2}};
}
\node[anchor=mid] at (1.183,4.756) {\textcircled{\footnotesize 1}};
\node[anchor=mid] at (-1.183,4.756) {\textcircled{\footnotesize 1}};
\node[anchor=mid] at (2.049,5.256) {\textcircled{\footnotesize 2}};
\node[anchor=mid] at (-2.049,5.256) {\textcircled{\footnotesize 2}};
\node[rotate=30] at (3.088,5.856) {$\cdots$};
\node[rotate=-30] at (-3.088,5.856) {$\cdots$};
\draw plot [smooth, tension=0.8] coordinates { (2.207,0.5) (1.95,0) (2.207,-0.5) };
\draw plot [smooth, tension=0.8] coordinates { (2.207,0.5) (2.464,0) (2.207,-0.5) };
\filldraw[white] (-1.207,-0.5) circle (0.2);
\filldraw[white] (-0.5,-1.207) circle (0.2);
\filldraw[white] (0.5,-1.207) circle (0.2);
\filldraw[white] (1.207,-0.5) circle (0.2);
\filldraw[white] (1.207,0.5) circle (0.2);
\filldraw[white] (0.5,1.207) circle (0.2);
\filldraw[white] (-0.5,1.207) circle (0.2);
\filldraw[white] (-1.207,0.5) circle (0.2);
\filldraw[white] (2.207,-0.5) circle (0.2);
\filldraw[white] (2.207,0.5) circle (0.2);
\filldraw[white] (-0.5,3.207)circle (0.2);
\filldraw[white] (0.5,3.207)circle (0.2);
\filldraw[white] (-1,4.073)circle (0.2);
\filldraw[white] (1,4.073)circle (0.2);
\node at (-1.,4.073) {\footnotesize$\circ$};
\node at (1.,4.073) {\footnotesize$\times$};
\filldraw[white] (-0.5,4.939)circle (0.2);
\filldraw[white] (0.5,4.939)circle (0.2);
\filldraw[white] (1.866,4.573) circle (0.2);
\node at (1.866,4.573) {\footnotesize$\circ$};
\filldraw[white] (1.366,5.439) circle (0.2);
\node at (1.366,5.439) {\footnotesize$\times$};
\filldraw[white] (2.232,5.939) circle (0.2);
\node at (2.232,5.939) {\footnotesize$\circ$};
\filldraw[white] (2.732,5.073) circle (0.2);
\node at (2.732,5.073) {\footnotesize$\times$};
\filldraw[white] (-1.866,4.573) circle (0.2);
\node at (-1.866,4.573) {\footnotesize$\times$};
\filldraw[white] (-1.366,5.439) circle (0.2);
\node at (-1.366,5.439) {\footnotesize$\circ$};
\filldraw[white] (-2.232,5.939) circle (0.2);
\node at (-2.232,5.939) {\footnotesize$\times$};
\filldraw[white] (-2.732,5.073) circle (0.2);
\node at (-2.732,5.073) {\footnotesize$\circ$};
\foreach \x in {-2.207,2.207}
{
\node at (\x,0.5) {\footnotesize$\times$};
\node at (\x,-0.5) {\footnotesize$\circ$};
}
\foreach \x in {-3.207,-1.207,1.207}
{
\node at (\x,-0.5) {\footnotesize$\times$};
\node at (\x,0.5) {\footnotesize$\circ$};
}
\foreach \y in {-2.207,1.207,3.207,4.939}
{
\node at (-0.5,\y) {\footnotesize$\times$};
\node at (0.5,\y) {\footnotesize$\circ$};
}
\foreach \y in {-3.207,-1.207,2.207}
{
\node at (0.5,\y) {\footnotesize$\times$};
\node at (-0.5,\y) {\footnotesize$\circ$};
}
\node[anchor=mid] at (0,6) {\textcircled{\footnotesize $\infty$}};
\node[anchor=mid] at (-2,2) {\textcircled{\footnotesize $\infty$}};
\node[anchor=mid] at (-2,-2) {\textcircled{\footnotesize $\infty$}};
\node[anchor=mid] at (3.5,0) {\textcircled{\footnotesize $\infty$}};
\end{tikzpicture}
\caption{Example of a graph $\Gamma^*$.}
\label{line-complex2}
\end{figure}

Clearly, $\Gamma^*$ is a bipartite, properly embedded graph.
(Here ``properly embedded'' means that it not only lies in the plane, but also that its
 vertices and edges do not accumulate to a point in the plane.)
We list some properties of this graph.
\begin{itemize}
\item[$(a)$]
Each vertex has degree~$3$ and lies on the boundaries of
three faces with labels \textcircled{\footnotesize 1},
\textcircled{\footnotesize 2} and~\textcircled{\footnotesize $\infty$}.
\item[$(b)$]
There are only finitely many digons
labeled \textcircled{\footnotesize 1} or~\textcircled{\footnotesize 2}.
(A digon is a face with only two boundary vertices and two boundary edges.)
\item[$(c)$]
Each face labeled \textcircled{\footnotesize $\infty$} is unbounded and 
there are only finitely many such faces.
\item[$(d)$]
Each face labeled \textcircled{\footnotesize 1} or \textcircled{\footnotesize 2}
is bounded.
\item[$(e)$]
If $r$ is large enough, then $\Gamma^*$ is connected.
\end{itemize}
Property $(a)$ is obvious from the definition of $\Gamma^*$.
To prove $(b)$ we note that for a digon $V$ labeled~\textcircled{\footnotesize $j$}
with $j\in\{1,2\}$ the mapping $f\colon V\to F_j$ is bijective and hence $V$ contains
a simple island over $D_j$.
Thus $(b)$ follows from the hypothesis that there are only finitely many simple islands over
$D_1$ and $D_2$.

To prove $(c)$ we note that in a face labeled \textcircled{\footnotesize $\infty$} 
the function $f$ is unbounded.  Thus such a face is unbounded and
by the Denjoy-Carleman-Ahlfors theorem (Lemma~\ref{la1}) there are 
only finitely many such faces.

To prove $(d)$, let $V$ be an unbounded face labeled~\textcircled{\footnotesize $j$}
with $j\in\{1,2\}$.
Then $\partial V$ contains an infinite chain $\cdots -\circ - \times-\circ - \times-\cdots$.
Since every vertex lies on the boundary of a face labeled~\textcircled{\footnotesize $\infty$},
and since there are only finitely many faces labeled~\textcircled{\footnotesize $\infty$},
there exist a face $V'$ labeled~\textcircled{\footnotesize $\infty$} such that
that this chain contains infinitely many vertices which lie on both $\partial V$ and $\partial V'$.
Let $v_1$ and $v_2$ be two such vertices and let $v_0$ be a vertex between
them. Then there exists a face $V''$ labeled~\textcircled{\footnotesize $\infty$} 
such  that $v_0\in\partial V''$. Connecting $v_1$ and $v_2$ by a crosscut in~$V'$
we see that $V''$ must intersect this crosscut. Thus $V''=V'$.
We conclude that there are infinitely many triplets of adjacent vertices which are on the 
boundary of both $V$ and $V'$. As the middle vertex of such a triplet has degree~$3$,
it must be connected to one of the other two vertices of the triplet by a double edge.
In other words, such a triplet leads to a digon.
Since there are only finitely many digons by $(b)$, this is a contradiction,
completing the proof of~$(d)$.

As a preparation for the proof of $(e)$, we note 
that there is a one-to-one correspondence between the
components of $\Gamma^*$ and the components of $f^{-1}\!\left(\overline{D}(0,r)\right)$.
In fact, given a component $C$ of $\Gamma^*$, the corresponding component of
$f^{-1}\!\left(\overline{D}(0,r)\right)$ is obtained by ``filling'' those
faces labeled~\textcircled{\footnotesize 1} or~\textcircled{\footnotesize 2}
whose boundaries are contained in~$C$.
Note that these faces are all bounded by~$(d)$.
Reversing this process, one obtains a component of $\Gamma^*$ from a component 
of $f^{-1}\!\left(\overline{D}(0,r)\right)$. 
We conclude from this that the number of components of $\Gamma^*$
is a non-increasing function of~$r$.

For the proof of $(e)$, as well as some subsequent arguments, 
it will be convenient to consider a graph $\Delta$ which in some sense
is dual to~$\Gamma^*$:
To each bounded face $V$ we associate a point $v\in V$.
(Recall that by $(c)$ and $(d)$ the bounded faces are those
 labeled~\textcircled{\footnotesize 1} or~\textcircled{\footnotesize 2}.)
These points $v$ are the vertices of~$\Delta$.
Two vertices are 
connected by an edge if the two faces of $\Gamma^*$ that contain these vertices
share a common edge in~$\Gamma^*$.
We take this edge in $\Delta$ to be in the union of the closures of the two faces in $\Gamma^*$,
crossing the edge in $\Gamma^*$ which separates these faces once.
So $f$ is bounded on the set of edges of~$\Delta$.
Figure~\ref{line-complex3} shows the graph  $\Delta$ corresponding to the graph $\Gamma^*$ in
Figure~\ref{line-complex2}.
(Again we only consider a homeomorphic image.)

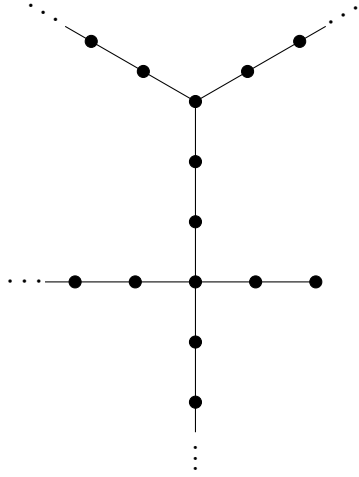
\begin{figure}[!htb]
\captionsetup{width=.85\textwidth}
\centering
\begin{tikzpicture}[scale=0.8,>=latex](-5,-5)(5,5)
\foreach \y in {-2,-1,...,3}
{
\filldraw (0,\y) circle (0.1);
}
\foreach \x in {-2,-1,1,2}
{
\filldraw (\x,0) circle (0.1);
}
\draw[-] (0,-2.5) -- (0,3);
\draw[-] (-2.5,0) -- (2,0);
\node at (-2.8,0) {$\cdots$};
\node at (0,-2.8) {$\vdots$};
\filldraw (-0.866,3.5) circle (0.1);
\filldraw (0.866,3.5) circle (0.1);
\filldraw (-1.732,4) circle (0.1);
\filldraw (1.732,4) circle (0.1);
\draw[-] (0,3) -- (2.165,4.25);
\draw[-] (0,3) -- (-2.165,4.25);
\node[rotate=30] at (2.501,4.45) {$\cdots$};
\node[rotate=-30] at (-2.501,4.45) {$\cdots$};
\end{tikzpicture}
\caption{The graph $\Delta$
corresponding to the graph $\Gamma^*$ in
Figure~\ref{line-complex2}.}
\label{line-complex3}
\end{figure}

Similarly as above we see that there is a one-to-one correspondence between the components 
of $\Delta$ and the components of $f^{-1}\!\left(\overline{D}(0,r)\right)$,
and hence to the components of~$\Gamma^*$. 
To pass from a component of $\Delta$ to a component of $f^{-1}\!\left(\overline{D}(0,r)\right)$
consider for a vertex $v$ the closure $\overline{V}$ of the face $V$ that contains~$v$.
Taking the union of the closures $\overline{V}$ over all $v$ in a component of $\Delta$
yields a component of~$f^{-1}\!\left(\overline{D}(0,r)\right)$.
This process can be reversed, so indeed there is the one-to-one correspondence mentioned. 
Bounded components of $\Gamma^*$ correspond to bounded components of~$\Delta$
and unbounded components of $\Gamma^*$ correspond to unbounded 
components of~$\Delta$.

Next we show that every component of $\Delta$ is a tree.
Otherwise there exists a closed curve and in fact a Jordan curve in~$\Delta$. We thus
have a Jordan curve $\gamma$ contained in the union of the faces
labeled~\textcircled{\footnotesize 1} or~\textcircled{\footnotesize 2}
which crosses each edge of $\Gamma^*$ at most once.
The interior of $\gamma$ contains some vertex, and this vertex 
is on the boundary of a face $V$ labeled~\textcircled{\footnotesize $\infty$}.
Since $V$ is unbounded, it must intersect~$\gamma$, which is a contradiction.
Thus every component of $\Delta$ is a tree.

For a face $V$ labeled~\textcircled{\footnotesize 1} or~\textcircled{\footnotesize 2}
the degree of the vertex $v$ of $\Delta$ such that $v\in V$ is given by $\mult(V)$.
We will also write $\mult(v)$ instead of $\mult(V)$. 
By hypothesis, there are only finitely many vertices of degree~$1$.
Since each bounded tree contains a vertex of degree~$1$, we conclude 
that $\Delta$ has only finitely many bounded components.
This implies that $\Delta$ also has an unbounded component.

We connect the finitely many bounded components of $\Delta$ by some paths 
to an unbounded component.
Then there exists $C>0$ such that $|f(z)|\leq C$ for $z$ on one of these paths.
This implies that $r<C$ and hence shows that if $r$ is chosen large enough at the 
beginning, then all components of $\Delta$ are unbounded.
It follows that all components of $\Gamma^*$ are unbounded.

To complete the proof of $(e)$, suppose that $\Gamma^*$ is disconnected.
Then there exists a face $V$ whose boundary contains two (unbounded) components of $\Gamma^*$.
We connect these two components by a crosscut $\gamma$ in~$V$. This crosscut separates 
$V$ into two domains $V_1$ and~$V_2$.

Since $\Gamma^*$ has no bounded components, $V$ and hence $V_1$ and $V_2$ are simply connected.
By~$(c)$, the face $V$ is labeled~\textcircled{\footnotesize $\infty$}
and hence $f$ is unbounded there.
We claim that $f$ is unbounded in each of the domains $V_1$ and~$V_2$.
In fact, suppose that $f$ is bounded in $V_j$ where $j\in\{1,2\}$.
Lemma~\ref{la9} yields that $f$ has an asymptotic value of modulus $r$ with
asymptotic path contained in~$V_j$.
This contradicts our assumption made at the beginning that all asymptotic values are
contained in $D(0,r)$.
Hence $f$ is unbounded in both $V_1$ and $V_2$.

Again there exists $C>0$ such that $|f(z)|\leq C$ for $z$ on the crosscut~$\gamma$.
Increasing $r$ to some value greater than $C$ thus increases the number of components
of $\{z\in\C\colon |f(z)|>r\}$.
Since, by Lemma~\ref{la1}, the number of these components is at most $\max\{1,2\rho(f)\}$
we conclude that if $r$ is large enough, then $\Gamma^*$ has only one component; that is,
$\Gamma^*$ is connected.
This yields $(e)$ and completes the proof of statements $(a)$--$(e)$.

As mentioned, a line complex is always connected, while $\Gamma^*$ 
and $\Delta$ need not be.
But $(e)$ says that this holds for large~$r$.

Let $p$ be the number of unbounded faces of~$\Gamma^*$.
Thus $p$ is the number of components of $\{z\in\C\colon |f(z)|>r\}$.
An argument similar to the one above about
the one-to-one correspondence between the components of~$\Gamma^*$
and the components of $\Delta$ 
yields that $p$ equals the number of complementary components of~$\Delta$.

Next we show that
\begin{equation} \label{a20}
p=2+\sum_{v} (\mult(v)-2),
\end{equation}
with the sum taken over all vertices $v$ of~$\Delta$.
The proof of~\eqref{a20} will only use
that $\Delta$ is an infinite properly embedded tree and that $\Delta$ has only finitely many 
vertices of degree~$1$.
To prove~\eqref{a20} suppose first that 
the number of vertices $v$ for which $\mult(v)\neq 2$ is finite.
If there are no such vertices, then
$\Delta$ is an infinite chain $\cdots -\circ - \times-\circ - \times-\cdots$.
In this case $\sum_{v} (\mult(v)-2)=0$ and thus~\eqref{a20} holds.

Let now $n\in\N$ and suppose that \eqref{a20} has been proved if the number of vertices $v$ 
with $\mu(v)\neq 2$ is less than $n$. Let $\Delta$ be a graph for which
there are $n$ such vertices.
If $\Delta$ has a vertex of degree $1$, we replace this vertex by an 
infinite half-chain $\circ - \times-\circ - \cdots$
(or $\times-\circ - \times \cdots$).
The new graph $\Delta'$ obtained has only $n-1$ 
vertices  $v$ with $\mu(v)\neq 2$ and both left and right side of \eqref{a20}
differ for $\Delta$ and $\Delta'$ by~$1$.
Thus \eqref{a20} holds for $\Delta$ since it holds for $\Delta'$ by induction hypothesis.

If $\Delta$ does not have a vertex of degree~$1$,
then $\Delta$ has a vertex of degree at least~$3$, and there exists such a vertex
$v_0$ with $\mult(v_0)\geq 3$ such that $v_0$ bounds $\mu(v_0)-1$ half-chains
$\circ - \times-\circ - \cdots$ (or $\times-\circ - \times \cdots$),
while all other vertices of $\Delta$ are on the remaining part of~$\Delta$.
Replacing the $\mu(v_0)-1$ half-chains by only one such half-chain yields 
a graph $\Delta'$ which has only $n-1$ vertices  $v$ with $\mu(v)\neq 2$.
The left and right side of \eqref{a20} differ for $\Delta$ and $\Delta'$ by~$\mu(v_0)-2$.
Again \eqref{a20} holds for $\Delta$ since it holds for $\Delta'$ by induction hypothesis.

This proves~\eqref{a20} if 
the number of vertices $v$ for which $\mult(v)\neq 2$ is finite.
However, minor modifications of the argument show that 
if the number of vertices $v$ for which $\mult(v)\neq 2$ is infinite,
then the number of complementary components of $\Delta$ is infinite.
Thus we also see that if $p$ is finite,
then the number of vertices $v$ with $\mult(v)\neq 2$ is finite.
This means that the sum in~\eqref{a20} is finite.

To pass from~\eqref{a20} to~\eqref{a5}, let
$v$ be a vertex of $\Delta$
and let $V$ be the face of $\Gamma^*$ containing~$v$.
Then $V$ is labeled~\textcircled{\footnotesize $j$} with $j\in\{1,2\}$ and it 
is an island over $F_j$.
Its boundary contains $\mult(V)$ vertices $\times$ and 
$\mult(V)$ vertices $\circ$. So it is an $n$-gon with $n=2\mult(V)$.

Also, $V$ contains at least one island over $D_j$.
Let $U_1,\dots,U_m$ be the islands over $D_j$ that are contained in~$V$.
Then
\begin{equation} \label{a8}
\mult(V)=\sum_{k=1}^m \mult(U_k).
\end{equation}
Let $N(V)$ be the number of critical points of $f$ contained in $V$ that are not mapped 
to~$D_j$.
Since $V$ contains $\mult(V)-1$ critical points and each $U_k$ contains
contains $\mult(U_k)-1$ critical points, we deduce from~\eqref{a8} that
\begin{equation} \label{a9}
N(V)=\mult(V)-1-\sum_{k=1}^m (\mult(U_k)-1) =m-1.
\end{equation}
This allows to rewrite~\eqref{a8} in the form 
\begin{equation} \label{a10}
\mult(V)-2=2m-2+\sum_{k=1}^m (\mult(U_k)-2)=2N(V)+\sum_{k=1}^m (\mult(U_k)-2).
\end{equation}
To obtain~\eqref{a5} we want to sum this over all~$V$.
Note that there may be (finitely many) vertices $v$ in $\Delta$ of 
degree~$2$ for which the corresponding face $V$ contains one critical
point and two simple islands over $D_j$.
For such a face $V$ both the left and right hand side of~\eqref{a10} are 
equal to~$0$.
Whether we include such a face or not will not affect the sum.
Similarly, the sum in~\eqref{a5} is unchanged if we remove this critical point and
the two simple islands.

Summing \eqref{a10} over all bounded faces $V$ and combining this with \eqref{a20} 
thus yields~\eqref{a5} with
$\sum_{V} N(V)$ instead of $N$, 
where the sum is taken over all bounded faces $V$ of~$\Gamma^*$.

To complete the proof of~\eqref{a5} we thus need to show that 
\begin{equation} \label{a21}
N=\sum_{V} N(V).
\end{equation}
In other words, we have to show that the unbounded
faces of $\Gamma^*$ contain no critical points of~$f$.
In order to do so, let $V$ be an unbounded face of~$\Gamma^*$.
Since $V$ is simply connected and bounded by a single curve, unbounded in both directions,
there exists a conformal mapping $\phi$ from $\D$ to $V$
such that $\phi$ has a continuous extension to $\overline{\D}\setminus\{1\}$,
mapping $\partial\D\setminus\{1\}$ to $\partial V$ and satisfying
$\phi(z)\to \infty$ as $z\to 1$.
Then 
\begin{equation} \label{a22}
u(z)=\log \frac{|f(\phi(z))|}{r}
\end{equation}
defines a positive harmonic function in $\D$  which extends 
continuously to $\overline{\D}\setminus\{1\}$, with $u(z)=0$ for 
$z\in \partial\D\setminus\{1\}$.
This yields that $u$ is a constant multiple of the Poisson kernel;
see, e.g., \cite[Theorem~6.19]{Axler2001}.
But this implies that $\log f\circ \phi$ has no critical points in~$\D$.
Hence $f$ has no critical point in~$V$.
This completes the proof of~\eqref{a21} and hence of~\eqref{a5}.

For $j\in\{1,2\}$ we choose $a_j\in D_j$ and a conformal mapping $\psi_j\colon F_j\to \D$
satisfying $\psi_j(a_j)=0$.
For a face $V$ labeled~\textcircled{\small $j$} we choose 
a conformal mapping $\tau_V\colon \D\to V$ with $\tau_V(0)\in f^{-1}(a_j)$.
Then $B:=\psi_j\circ f\circ \tau_V$ is a Blaschke product and $B(0)=0$.
The zeros of $B$ are contained in $\tau_V^{-1}(f^{-1}(D_j))$.
We will show that there exists $r_1\in (0,1)$,
depending only on the choice of the $F_j$ but not on~$V$,
such $\tau_V^{-1}(f^{-1}(D_j))$ and hence the zeros of $B$ are contained in $D(0,r_1)$.

In order to do so we note that since $p<\infty$, for all but finitely many faces $V$ 
labeled~\textcircled{\small $j$} there is  exactly one island $U$ of multiplicity
$2$ over $D_j$ contained in~$V$, but there are no further islands over $D_j$ contained in~$V$, and 
there are no critical points in $V\setminus U$.
For such a face $V$ and island $U$ we then have $f^{-1}(D_j)\cap V=U$
and the mapping $f\colon V\setminus \overline{U}\to F_j\setminus\overline{D_j}$
is a covering map of degree~$2$. Denoting by $\modulus(\Omega)$ the modulus of an annulus $\Omega$ 
we find that 
\begin{equation} \label{a11a}
\begin{aligned} 
\modulus\!\left( \D\setminus \overline{\tau_V^{-1}(f^{-1}(D_j))}\right)
&=
\modulus\!\left( V\setminus \overline{f^{-1}(D_j)}\right) 
\\ &
=
\modulus\!\left( V\setminus \overline{U}\right) 
=\frac12
\modulus\!\left( F_j\setminus \overline{D_j}\right).
\end{aligned} 
\end{equation}
Thus the modulus of $\D\setminus \overline{\tau_V^{-1}(f^{-1}(D_j))}$ is bounded below.
This implies that there exists $r_1\in (0,1)$ such that $\tau_V^{-1}(f^{-1}(D_j))\subset D(0,r_1)$
for all such faces~$V$. Increasing $r_1$ if necessary we may assume that this inclusion
also holds for the finitely many faces $V$ where 
$f\colon V\setminus \overline{f^{-1}(D_j)}\to F_j\setminus\overline{D_j}$
is not a covering of degree~$2$.

We now choose $r_2\in (r_1,1)$ and apply Lemma~\ref{la7} to $B$. With the function $A$ obtained from
this lemma we define $f_V\colon V\to F_j$ by $f_V=\psi_j^{-1}\circ A\circ \tau_V^{-1}$.
Thus $f_V$ is a quasiregular mapping having one $a_j$-point of multiplicity $\mu(V)$ and no other 
$a_j$-point in~$V$, and there exists a neighborhood $W$ of $\partial V$ with 
$f_V(z)=f(z)$ for $z\in V\cap W$.
Since there are only finitely many faces $V$ 
labeled~\textcircled{\footnotesize $j$} with $j\in\{1,2\}$
for which the degree of the mapping $f_V\colon V\to F_j$ is greater than~$2$,
Lemma~\ref{la7} also  yields that
 we may choose the mappings $f_V$ with uniformly bounded dilatation.

We now define a mapping $g\colon \C\to\C$ by putting $g(z)=f_V(z)$ if $z\in V$ for such a
face $V$, and $g(z)=f(z)$ otherwise.
Then $g$ is quasiregular.
By  Lemma~\ref{lemma-qr} there exists a quasiconformal 
homeomorphism $\phi\colon\C\to\C$ such that the mapping $h:=g\circ \phi$ is entire.
It follows that all except possibly finitely many $a_j$-points 
are multiple, for $j\in\{1,2\}$. Moreover, Lemma~\ref{la4} yields that $\rho(h)\leq K(\phi)\rho(f)<\infty$.

As in~\eqref{a2} we now consider
\begin{equation} \label{a12}
R(z)=\frac{h'(z)^2}{(h(z)-a_1)(h(z)-a_2)}
\end{equation}
and deduce from the lemma on the logarithmic derivative that $R$ is a rational function.
Assuming without loss of generality that $a_{1,2}=\pm 1$ we find as in~\eqref{a2a} that $h$ has the form
\begin{equation} \label{a13}
h(z)=\cos\!\left( \int \! \sqrt{R(z)}dz\right) .
\end{equation}
We saw above that $\{z\in\C\colon |f(z)|>r\}$ has $p$ components if $r$ is sufficiently large.
This implies that 
$\{z\in\C\colon |h(z)|>r\}$ has $p$ components for large~$r$.
Together with~\eqref{a13} this yields that 
\begin{equation} \label{a14}
\int \! \sqrt{R(z)}dz \sim \alpha z^{p/2}
\quad\text{and hence}\quad
R(z)\sim \beta z^{p-2}
\end{equation}
as $z\to\infty$, for certain $\alpha,\beta\in\C\setminus\{0\}$.
It follows that 
\begin{equation} \label{a15}
\log M(r,h)\sim \gamma r^{p/2}
\end{equation}
for some $\gamma>0$.

The function $g$ agrees with $f$ in $\C\setminus f^{-1}(F_1\cup F_2)$. Thus
$g$ is holomorphic in a neighborhood of any point $z\in\C$ for which $|g(z)|>r$.
Hence for large $r$ the function $\phi$ is conformal in a neighborhood of $z$ if $|h(z)|=|g(\phi(z))|>r$.
Thus the set of points in $\C\setminus\overline{\D}$ where $\phi$ is not conformal is contained in 
\begin{equation} \label{a16}
A:=\{z\in\C\colon |z|>1\ \text{and}\ |h(z)|<r\},
\end{equation}
provided $r$ is sufficiently large.
It can be deduced from~\eqref{a13} that $A$ satisfies~\eqref{c4}.
Lemma~\ref{lemma-twb} and the remark following it now imply that that there exists $a\in\C$ such that
$\phi(z)\sim az$
as $z\to\infty$.
Now~\eqref{a6} follows from~\eqref{a15}.
\end{proof}

\begin{proof}[Proof of Theorem \ref{thm3}]
Without loss of generality we may assume that $0\in D_1$ and $\infty\in D_2$.
Then there exists $\varepsilon>0$ such that 
$D(0,\varepsilon)\subset D_1$ and
$\{z\in\C\colon |z|>1/\varepsilon\}\cup\{\infty\}\subset D_2$.
We put $a=1+\varepsilon$ and 
\begin{equation} \label{b1}
f(z)=\prod_{k=1}^\infty \frac{1-z/a^k}{1+z/a^k} .
\end{equation}
It is easy to see that the infinite product converges and 
thus defines a function $f$ meromorphic in~$\C$.

For $x\geq a$ there exists $n\in\N$ and $\eta\in [0,1)$ such that $x=a^{n+\eta}$.
Hence
\begin{equation} \label{b2}
|f(x)|\leq\left|\frac{1-x/a^n}{1+x/a^n}\right|= \frac{a^\eta-1}{a^\eta+1}
\leq \frac{a-1}{a+1}<\varepsilon.
\end{equation}
This implies that $f$ has no island over $D(0,\varepsilon)$ and hence no
island over~$D_1$. An analogous argument shows that $f$ has no island over
$\{z\in\C\colon |z|>1/\varepsilon\}\cup\{\infty\}$ and hence no island over~$D_2$.

Standard arguments show that the function $f$ defined by~\eqref{b1} has order~$0$
and in fact that
\begin{equation} \label{d17}
T(r,f)\sim c\,(\log r)^2
\end{equation}
for some $c>0$ as $r\to\infty$.

This completes the proof for the case that $\rho=0$. 
To deal with the general case we note that if $g$ is any entire function,
then $f\circ g$ has no island over $D_1$ and $D_2$.
Since
\begin{equation} \label{d18}
\lim_{r\to\infty} \frac{\log T(r,f)}{\log\log r}=2
\end{equation}
by~\eqref{d17} we deduce from~\eqref{c8} that $\rho(f\circ g)=2\rho(g)$.
Thus we can achieve that $f\circ g$ has the preassigned order $\rho$
by choosing $g$ with $\rho(g)=\rho/2$.
\end{proof}

\begin{proof}[Proof of Theorem \ref{thm4}]
Without loss of generality we may assume that $a_{1,2}=\pm 2i$ and that $\infty\in D$.
Let $\delta\in (0,1)$ and $\beta\in (0,1/2)$ and let $U$ be the domain
which contains the imaginary axis and which is bounded by the 
curve 
\begin{equation} \label{d0}
\gamma\colon [0,\infty)\to\C, \quad \gamma(t)=\delta+e^{i\beta\pi}t,
\end{equation}
and the curves $-\gamma$, $\overline{\gamma}$ and $-\overline{\gamma}$ obtained 
from $\gamma$ by reflections.
The idea is to glue the restriction (of a modification) of $2 \sinh(\arcsin z)$ to 
the  domain $U$ and the restriction of the function given by~\eqref{b1} to a half-plane.

In order to do so we note that $\arcsin \gamma$ is an injective curve in the first 
quadrant which connects $\arcsin \delta$ with $\infty$. A computation shows 
that 
\begin{equation} \label{d1}
\arcsin\gamma(t)=\left(\frac{1}{2}-\beta\right)\pi -i(\log t +\log 2) +O\!\left(\frac{1}{t}\right)
\end{equation}
as $t\to\infty$.

Next we consider $G(z)=f(-z)$, with the function $f$ from~\eqref{b1}; that is,
\begin{equation} \label{d2}
G(z)=\prod_{k=1}^\infty \frac{1+z/a^k}{1-z/a^k} ,
\end{equation}
with some $a>1$ to be determined later.
Denoting by $\log G$ the branch of the logarithm with $\log G(0)=1$ we find that
\begin{equation} \label{d3}
\log G(it) = 
\sum_{k=1}^\infty \log \frac{1+it/a^k}{1-it/a^k} 
=2i 
\sum_{k=1}^\infty \arg\!\left(1+\frac{it}{a^k}\right)
=2i h(t)
\end{equation}
with
\begin{equation} \label{d4}
h(t):=
\sum_{k=1}^\infty \arctan\!\left(\frac{t}{a^k}\right). 
\end{equation}
We have 
\begin{equation} \label{d5}
\int_{1}^\infty \arctan\!\left(\frac{t}{a^s}\right) ds
\leq
h(t)
\leq
\int_{0}^\infty \arctan\!\left(\frac{t}{a^s}\right) ds .
\end{equation}
Now 
\begin{equation} \label{d6}
\int_{0}^\infty \arctan\!\left(\frac{t}{a^s}\right) ds
=\frac{1}{\log a}\int_0^t  \frac{\arctan u}{u}du \sim \frac{\pi}{2 \log a} \log t
\end{equation}
as $t\to\infty$.
Together with~\eqref{d5} this yields that 
\begin{equation} \label{d7}
h(t)= \frac{\pi}{2 \log a} \log t +O(1)
\end{equation}
as $t\to\infty$. It follows easily from~\eqref{d4} that $h$ is increasing and 
concave. This implies that~\eqref{d7} can be improved to 
\begin{equation} \label{d8}
h(t)= \frac{\pi}{2 \log a} \log t +\eta +o(1) 
\end{equation}
for some $\eta\in\R$ as $t\to\infty$.
Hence 
\begin{equation} \label{d9}
\log G(it)= i \frac{\pi}{\log a} \log t +2i\eta +o(1) 
\end{equation}
as $t\to\infty$ by~\eqref{d3}.

Let now $V$ be the domain bounded by the curves $\gamma$ and $\overline{\gamma}$
which contains the interval $(\delta,\infty)$. Thus 
\begin{equation} \label{d10}
V=\{z\in\C\colon |\arg (z-\delta)|<\beta\pi\}.
\end{equation}
Let $b>0$ and define 
\begin{equation} \label{d11}
H\colon V\to\C,\quad H(z)=e^b G((z-\delta)^{1/(2\beta)}) .
\end{equation}
It follows from~\eqref{d9} that
\begin{equation} \label{d12}
\log H(\gamma(t))=
b+\log G(it^{1/(2\beta)})=b+ i \left(\frac{\pi}{2\beta\log a} \log t +2\eta\right) +o(1) 
\end{equation}
as $t\to\infty$. 
This holds for any choice of $a$, $b$ and $\beta$. We choose 
$\beta=\pi/(2\log a)$ and $b=(1/2-\beta)\pi$.
Note that this still leaves the possibility to choose~$a$ later.
Then the right hand sides of~\eqref{d1} and~\eqref{d12}  have the same asymptotics 
as $t\to\infty$, apart from an additive constant.

Using interpolation it can now be shown that there exists
an odd quasiconformal mapping $\psi\colon U\to\C$, symmetric with respect to $\R$, 
which agrees with the arcsine in a neighborhood of the imaginary axis and
which satisfies
\begin{equation} \label{d13}
\psi(\gamma(t))=\log H(\gamma(t))\quad\text{for all}\ t\geq 0.
\end{equation}
Note that $\log H$ and hence $\psi$ map the curve~$\gamma$, and hence 
the boundary of~$V$, to the line $\{z\in\C\colon \im z=b\}$.

Next we consider the function $S\colon \{z\in\C\colon |\im z|\leq b\}\to\C$,
\begin{equation} \label{d14}
S(z)=
\begin{cases}
\displaystyle \frac{2}{b}(\im z+b)e^{z} -e^{-z}& \text{if}\ \displaystyle -b\leq \im z\leq -\frac{b}{2},  \\[2mm]
2 \sinh z & \text{if}\ |\im z|<b , \\[2mm]
\displaystyle e^z+\frac{2}{b}(\im z-b)e^{-z} & \text{if}\ \displaystyle \frac{b}{2}\leq \im z\leq b. 
\end{cases}
\end{equation}
Thus $S(z)=\pm e^{\pm z}$ if $\im z=\pm b$.
It is easy to see that $S$ is quasiregular.
Finally we define $F\colon\C\to\bC$ by 
\begin{equation} \label{d15}
F(z)=
\begin{cases}
S(\psi(z)) & \text{if}\ z\in \overline{U}, \\
H(z)  & \text{if}\ z\in V, \\
-H(-z)  & \text{if}\ z\in -V. 
\end{cases}
\end{equation}
Note that by~\eqref{d13} and~\eqref{d14} we $S(\psi(z))=\exp\psi(z)=H(z)$ for 
$z\in\partial V$. Thus $F$ defines a quasiregular mapping.
Lemma~\ref{lemma-qr} yields that there exists a quasiconformal homeomorphism
$\phi\colon\C\to\C$ such that $f:=F\circ \phi$ is meromorphic.

Noting that the hyperbolic sine has the totally ramified values $\pm i$ and recalling
that we have assumed that $a_{1,2}=\pm 2i$ we see that 
all $a_j$-points are multiple for $j\in\{1,2\}$.
A similar argument as in the proof of Theorem~\ref{thm3} shows that 
given $\varepsilon>0$, we can choose $a$ in~\eqref{d2} such that $f$ has no island
over $\{z\in\C\colon |z|>1/\varepsilon\}\cup\{\infty\}$.
Choosing $\varepsilon$ sufficiently small we conclude that $f$ has no island
over~$D$.

Finally, we have 
\begin{equation} \label{d16}
n(r,F)\sim\frac{1}{\beta\log a}\log r =\frac{2}{\pi}\log r
\quad\text{and}\quad
n\!\left(r,\frac{1}{F-a_j}\right)
\sim\frac{2}{\pi}\log r 
\end{equation}
as $r\to\infty$, for $j\in\{1,2\}$.
Lemma~\ref{la4} together with standard arguments
now shows that $f$ has order~$0$ an in fact that~\eqref{d18} holds.

This proves the theorem for $\rho=0$. As at the end of the proof 
of Theorem~\ref{thm3} we can use this to obtain the result for any
preassigned order $\rho\in (0,\infty)$ by considering $f\circ g$ instead of $f$ 
for an entire function $g$ satisfying $\rho(g)=\rho/2$.
\end{proof}

\begin{proof}[Proof of Theorem \ref{thm5}]
The idea behind the construction is due to  K\"unzi~\cite{Kuenzi1956}.
The details are somewhat different though.

An outline of the construction is as follows. 
We consider two elliptic function $g_1$ and $g_2$, both having periods $2$ and $2i\tau$, 
where $\tau>0$. 
We restrict $g_1$ and $g_2$ to the sectors
\begin{equation} \label{k5}
S_1:=\{z\in\C\colon |\im z|\leq \tau \re z\}
\quad\text{and}\quad
S_2:=\{z\in\C\colon |\im z|\leq -\tau \re z\}.
\end{equation}
We will modify  the $g_j$ near $\partial S_j$ to obtain quasiregular mappings $f_j\colon S_j\to\C$
satisfying
\begin{equation} \label{k6}
f_1(t(1\pm i\tau))=f_2(t(-1\pm i\tau)).
\end{equation}
With $\alpha:=\arctan\tau$ this yields that
\begin{equation} \label{k7}
f_0(z)= 
\begin{cases} 
f_1( z^{2\alpha/\pi}) & \text{if}\ \re z\geq 0,\\
f_2( -(-z)^{2\alpha/\pi}) & \text{if}\ \re z\leq 0,
\end{cases} 
\end{equation}
defines a quasiregular mapping $f_0\colon \C\to\bC$.
By Lemma~\ref{lemma-qr} there exists a quasiconformal mapping
$\phi\colon\C\to\C$ such that $F:=f_0\circ \phi$ is meromorphic.
The mapping $f$ we want to construct is then given by $f(z)=F(z^p)$
for some $p\geq 2$. 
Using Lemma~\ref{lemma-twb} we will see that $\rho(f)=4p\alpha/\pi$ so 
that we can achieve any preassigned positive order for~$f$.

Moreover, both $f_1$ and $f_2$ will have the critical values $a_1$, $a_2$ and $a_3$.
The fourth critical value of $f_1$ will be different from that of $f_2$. 
The domain $D$ will be such that it contains these fourth critical values.

We now come to the details of the construction.
Without loss of generality we may assume that $\{a_1,a_2,a_3\}=\{0,1,\infty\}$.
Let 
\begin{equation} \label{k}
R=\{x+iy\colon 0\leq x\leq 1,\, 0\leq y\leq\tau\}
\end{equation}
and let $g_1$ be a conformal mapping from the interior of $R$ onto the upper half-plane.
The mapping $g_1$ extends continuously to the boundary of $R$ and we may normalize
it to satisfy $g_1(0)=1$, $g_1(1)=\infty$ and $g_1(1+i\tau)=-1$. Then
$a:=g_1(i\tau)\in (-1,1)$.
The mapping $g_1$ can be extended by reflections to an elliptic function with
periods $2$ and $2i\tau$.

The mapping $g_1$ can be expressed in terms of the Weierstrass $\wp$-function
with these periods.
In fact, if $L$ is the fractional linear transformation satisfying 
$L(\infty)=1$, $L(e_1)=\infty$ and $L(e_3)=-1$, then $g_1=L\circ \wp$.

To define the quasiregular mapping $f_1\colon S_1\to\bC$
we put, for $m,n\in\Z$,
\begin{equation} \label{k0}
R_{m,n}=m+in\tau+R=\{x+iy \colon m\leq x\leq m+1,\, n\tau\leq y\leq (n+1)\tau\} .
\end{equation}
If $R_{m,n}\subset S_1$, we put $f_1(z)=g_1(z)$ for $z\in R_{m,n}$.
We also put $f_1(x)=g_1(x)$ for $0\leq x<1$.

It remains to define $f_1$ in $\Delta_{m,n}:=S_1\cap R_{m,n}$ for those $m,n\in\Z$
for which $R_{m,n}\not\subset S_1$, but the interior of $R_{m,n}$ intersects~$S_1$.
(This is the case if $m\geq 0$ and $n=m$ or $n=-m-1$.)
Note that $\Delta_{m,n}$ is a triangle for such $m$ and~$n$.
We begin by defining $f_1$ on $\Delta:=\Delta_{0,0}$.
In fact, we will first define $f_1$ on~$\partial \Delta$.
This will be done in such a way that
it can be extended quasiconformally to the interior of $\Delta$ using Lemma~\ref{la10}.

Given that $f_1$ is defined already on $\partial\Delta\cap S_1$, it remains
to define $f_1$ on $\partial\Delta\cap\partial S_1=\{t(1+i\tau)\colon 0\leq t\leq 1\}$.
To motivate the definition we note that
$g_1$ maps $\partial\Delta\cap\partial S_1$ to a curve in the upper 
half-plane which connects $1$ to $-1$. We want to define $f_1$ such that it
maps  $\partial\Delta\cap\partial S_1$
to the semicircle $\{e^{it}\colon 0\leq t\leq \pi\}$ which also connects
$1$ and~$-1$. The quasiconformal extension of $f_1$ will then map $\Delta$
to the domain $\Omega_1 :=\{z\in\C\colon \im z>0,\, |z|>1\}$. Thus we make the ansatz 
\begin{equation} \label{k1}
f_1(t(1+i\tau))=\exp (i\pi H(t))
\end{equation}
with a homeomorphism $H\colon [0,1]\to [0,1]$ satisfying $H(0)=0$ and $H(1)=1$. 

We want to choose $H$ such that the resulting mapping 
$f_1\colon \partial \Delta\to\partial \Omega_1$ 
can be extended quasiconformally to~$\Delta$.
Let $\sigma\colon \D\to\Delta$ and $\tau\colon \Omega_1\to\D$ 
be conformal mappings. These mappings have continuous extensions to the boundaries
so that we have a mapping $h:=\tau\circ f_1\circ\sigma\colon\partial \D\to\partial\D$.
We thus want to choose $H$ such that $h$ satisfies the hypotheses of Lemma~\ref{la10}.

Since $g_1$ has (simple) critical points at $0$ and $1+i\tau$ and
$f_1(z)=g_1(z)$ for $z\in\partial\Delta\cap S_1$,  
we find that this is the case 
if $H\in C^2[0,1]$ with $H'(0)=H'(1)=0$, $H''(0)\neq 0$,
$H''(1)\neq 0$, and $H'(x)>0$ for $0<x<1$.
So we fix any such mapping~$H$. We extend the mapping $h$ to a quasiconformal 
self-mapping of $\D$. The corresponding extension of the mapping
$f_1\colon\partial\Delta\to\partial\Omega_1$ is then given by
$f_1:=\tau^{-1}\circ h\circ\sigma^{-1}\colon\Delta\to\Omega_1$.

Next we define $f_1$ on $\Delta_{1,1}$. Again we define it first on $\partial \Delta_{1,1}$. 
As $f_1$ is defined on $\partial\Delta_{1,1}\cap S_1$ already,
we have to define it only on $\partial\Delta_{1,1}\cap \partial S_1 =\{t(1+i\tau)\colon 1\leq t\leq 2\}$.
We do so by putting $f_1(t(1+i\tau))=f_1((2-t)(1+i\tau))$ for $1\leq t\leq 2$.
As before
the mapping $f_1\colon\partial \Delta_{1,1}\to\bC$ 
can be extended quasiconformally to the interior of $\Delta_{1,1}$.

We have thus defined $f_1$ on $\Delta_{0,0}\cup\Delta_{1,1}$.
We extend the definition to $\Delta_{m,m}$ with $m\geq 2$ by periodicity; that 
is, we put $f_1(z)=f_1(z-2\lfloor m/2\rfloor(1+i\tau))$ for
$z\in\Delta_{m,m}$.

It remains to define $f_1$ in the still missing triangles in 
the lower half-plane, which are of the form $\Delta_{m,-m-1}$ with $m\in\N_0$.
This we do by reflection in the real axis; that is, we put 
$f_1(z)=\overline{f_1(\overline{z})}$ for $z$ in such a triangle.

We have thus defined the quasiregular mapping $f_1\colon S_1\to\bC$.
To define the quasiregular mapping $f_2\colon S_2\to\bC$, we put
\begin{equation} \label{k2}
g_2(z)
=-\overline{g_1(\overline{z}+1+i\tau)}.
\end{equation}
Note that $g_2$ maps the rectangle $R_{-1,0}$ onto
the upper half-plane and satisfies $g_2(0)=1$, $g_2(i\tau)=\infty$, $g_2(-1+i\tau)=-1$
and $g_2(-1)=-\overline{g_1(i\tau)}=-a$.

For $m,n\in\Z$  such that $R_{m,n}\subset S_2$ and $z\in\R_{m,n}$ we put $f_2(z)=g_2(z)$.
We also put $f_2(x)=g_2(x)$ for $-1< x\leq 0$
and define $f_2$ on $\partial S_2$ by~\eqref{k6}.
This defines $f_2$ on the sector $S_2$ except for the interior of the 
triangles $R_{-m,m-1}\cap S_2$ and $R_{-m,-m}\cap S_2$ with $m\in\N$.
As before we can extend $f_2$ quasiconformally to these triangles.
Here the triangle $R_{-m,m-1}$ is mapped onto the half-disk
$\Omega_2 :=\{z\in\C\colon \im z>0,\, |z|<1\}$ while 
$R_{-m,-m}$ is mapped onto $\overline{\Omega_2}$.

Thus for $j\in\{1,2\}$ we have defined a quasiregular mapping $f_j\colon S_j\to\bC$
such that~\eqref{k6} holds. This implies that the mapping $f_0$ defined by~\eqref{k7} is 
quasiregular.

By construction, all $(-1)$-points and all poles of $f_0$ are multiple, and 
except for the origin all $1$-points of $f_0$ are also multiple.
Moreover, all $a$-points in the right half-plane are multiple and all
$(-a)$-points in the left half-plane are multiple.
It follows from~\eqref{k6} and~\eqref{k1} that $f_0$ maps the imaginary axis to $\partial \D$.

Let $D$ be a Jordan domain which contains $a$ and $-a$ and 
whose closure is contained in~$\D$.
Then an island over $D$ cannot intersect the imaginary axis.
Thus all islands over $D$ are contained in the right or left half-plane.
Those contained in the right half-plane contain a multiple $a$-point
while those contained in the left half-plane contain a multiple $(-a)$-point.
We deduce that there are no simple islands over~$D$.

As mentioned above, Lemma~\ref{lemma-qr} yields that there exists a quasiconformal mapping
$\phi\colon\C\to\C$ such that $F:=f_0\circ \phi$ is meromorphic.
It is easy to see that the set $A$ of all $z$ satisfying $|z|>1$
where $f_1$ and $f_2$ are not meromorphic satisfies~\eqref{c4}.
Lemma~\ref{lemma-twb} now yields that $\phi$ satisfies~\eqref{c3}.
Since an elliptic function has order $2$ this implies that 
$\rho(F)=4\alpha/\pi$.

As it is the case for $f_0$, the function $F$ has no simple island over $D$ 
and all poles and all $(\pm 1)$-points of $F$ are multiple, except
for the simple $1$-point at the origin.
We finally put $f(z)=F(z^p)$ for some $p\in\N$ with $p\geq 2$.
Then the origin is a multiple $1$-point of $f$ and we conclude
that $f$ has no simple island over $D$ 
and that all poles and all $(\pm 1)$-points of $f$ are multiple.
Moreover, $\rho(f)=4p\alpha/\pi$.

Since $\alpha=\arctan\tau$ we can achieve $\rho(f)=\rho$ for any given
$\rho\in (0,\infty)$ by a suitable choice of $\tau$ and~$p$.
\end{proof}

\noindent
Mathematisches Seminar\\
Christian-Albrechts-Universit\"at zu Kiel\\
Ludewig-Meyn-Str.\ 4\\
24098 Kiel\\
Germany\\
{\tt Email: bergweiler@math.uni-kiel.de}

\medskip

\noindent
Department of Mathematics\\
Purdue University\\
West Lafayette, IN 47907\\
USA\\
{\tt Email: eremenko@math.purdue.edu}

\end{document}